\newtheorem{thm}{Theorem}[section]
\newtheorem{lem}[thm]{Lemma}
\newtheorem{prop}[thm]{Proposition}
\newtheorem{defn}{Definition}[section]
\newenvironment{proof}{\noindent\emph{Proof.}}{\hfill$\square$\medskip}
\newcommand{\s}{\mathcal{S}}
\newcommand{\R}{\mathbb{R}}
\newcommand{\e}{\varepsilon}
\newcommand{\D}{(-\Delta)}
\newcommand{\F}{\mathcal{F}}
\title{Moser functions and fractional Moser-Trudinger type inequalities}
\author{Ali Hyder \thanks{The author is supported by the Swiss National Science Foundation, project nr. PP00P2-144669.} \\ {\small Universit\"at Basel}\\
{\small \texttt{ali.hyder@unibas.ch} }}
\begin{document}

\maketitle
\begin{abstract}
We improve the sharpness of some fractional Moser-Trudinger type inequalities, particularly those
studied by Lam-Lu and  Martinazzi. As an application, improving upon works of Adimurthi and Lakkis, we prove the existence of weak 
solutions to the problem
$$\D^\frac{n}{2}u=\lambda ue^{bu^2} \,\text{ in }\Omega,\, 0<\lambda<\lambda_1,\,b>0,$$
with Dirichlet boundary condition, for any domain $\Omega$ in $\R^n$ with finite measure. Here $\lambda_1$ is the first eigenvalue of $(-\Delta)^\frac n2$ on $\Omega$.
\end{abstract}
\section{Introduction to the problem}
Let $n\geq 2$ and let $\Omega$ be a bounded domain in $\R^n$. The Sobolev embedding theorem states that $W^{k,p}_0(\Omega)\subset L^q(\Omega)$ for 
$1\leq q\leq \frac{np}{n-kp}$ and $kp<n$. However, it is not true that $W^{k,p}_0(\Omega)\subset L^\infty(\Omega)$ for $kp=n$.
In the borderline case, as shown by Yudovich \cite{VIY}, Pohozaev \cite{Poho} and Trudinger \cite{Trudinger}, $W^{1,n}_0(\Omega)$ embeds into 
an Orlich space and in fact 
\begin{align} \label{1}
 \sup_{u\in W^{1,n}_0(\Omega),\,\|\nabla u\|_{L^n(\Omega)}\leq 1}\int_{\Omega}e^{\alpha|u|^\frac{n}{n-1}}dx<\infty,
\end{align}
for some $\alpha>0$. Moser \cite{Moser} found the best constant $\alpha$ in the inequality \eqref{1}, obtaining the so called Moser-Trudinger inequality: 
\begin{align} \label{2}
 \sup_{u\in W^{1,n}_0(\Omega),\,\|\nabla u\|_{L^n(\Omega)}\leq 1}\int_{\Omega}e^{\alpha_n|u|^\frac{n}{n-1}}dx<\infty, \quad \alpha_n=n|S^{n-1}|^\frac{1}{n-1}.
 \end{align}
 The constant $\alpha_n$ in \eqref{2} is the best constant in the sense that for any $\alpha>\alpha_n$, the supremum in \eqref{1} is infinite. 
A generalized version of Moser-Trudinger inequality is the following theorem of  Adams \cite{Adams}:
 
 \medskip
\noindent \textbf{Theorem A (\cite{Adams})}
\emph{
If $k$ is a positive integer less than $n$, then there is a constant $C=C(k,n)$ such that
$$\sup_{u\in C^k_c(\Omega),\,\|\nabla^ku\|_{L^\frac nk(\Omega)}\leq 1}\int_{\Omega}e^{\alpha|u|^\frac{n}{n-k}}dx\leq C|\Omega|,$$
where
$$\alpha=\alpha(k,n)=\frac{n}{|S^{n-1}|}\left\{\begin{array}{ll}
               \bigg[\frac{\pi^\frac n2 2^k\Gamma\left(\frac{k+1}{2}\right)}{\Gamma\left(\frac{n-k+1}{2}\right)}\bigg]^\frac{n}{n-k}, &m= odd,\\
                   \rule{.0cm}{.8cm}
                   \bigg[\frac{\pi^\frac n2 2^k\Gamma\left(\frac{k}{2}\right)}{\Gamma\left(\frac{n-k}{2}\right)}\bigg]^\frac{n}{n-k}, &m= even,
                                         \end{array}
 \right.$$
and $\nabla^k:=\nabla\Delta^\frac{k-1}{2}$ for $k$ odd and $\nabla^k =\Delta^\frac{k}{2}$ for $k$ even. Moreover the constant $\alpha$ is sharp in the sense that
\begin{align}\label{infty}
 \sup_{u\in C^k_c(\Omega),\,\|\nabla^ku\|_{L^\frac nk(\Omega)}\leq 1}\int_{\Omega}f(|u|)e^{\alpha|u|^\frac{n}{n-k}}dx=\infty,
\end{align}
for any $f:[0,\infty)\to[0,\infty)$ with $\lim_{t\to\infty}f(t)=\infty$.\footnote{Identity \eqref{infty} is proven in \cite{Adams}, although not explicitly stated.}}

\medskip
In a recent work Martinazzi \cite{LM} has studied the Adams inequality in a fractional setting. In order to state its result first we recall the space 
$$L_s(\R^n):=\left\{u\in L^1_{loc}(\R^n): \int_{\R^n}\frac{|u(x)|}{1+|x|^{n+2s}}dx<\infty\right\}.$$ The operator $\D^s$ can be defined on the space 
$L_s(\R^n)$ via the duality 
\begin{align}\label{welldefined}
 \langle\D^su,\,\varphi\rangle:=\int_{\R^n}u\D^s\varphi dx,\quad \varphi\in\s(\R^n),
\end{align}
where $$\D^s\varphi=\F^{-1}\left(|\xi|^{2s}\hat{\varphi}\right), \quad \varphi\in\s(\R^n),$$
$\F$ is the normalized Fourier transform and $\s(\R^n)$ is the Schwartz space.
Notice that the integral in \eqref{welldefined} is well-defined thanks to \cite[Proposition 2.1]{H-Structure}.

Now for an open set 
$\Omega\subseteq\R^n$ (possibly $\Omega=\R^n$), $s>0$ and $1\leq p\leq \infty$ we define the fractional Sobolev space  $\tilde{H}^{s,p}(\Omega)$ by
$$\tilde{H}^{s,p}(\Omega):=\left\{u\in L^p(\Omega): u=0\text{ on }\R^n\setminus\Omega,\,(-\Delta)^\frac{s}{2}u\in {L^p(\R^n)}\right\}.$$

\medskip
\noindent \textbf{Theorem B (\cite{LM})}
\emph{For any open set $\Omega\subset\R^n$ with finite measure and for any $p\in (1,\,\infty)$ we have 
$$\sup_{u\in \tilde{H}^{\frac np,p}(\Omega),\,\|(-\Delta)^\frac{n}{2p}u\|_{L^p(\Omega)}\leq 1}
\int_{\Omega}e^{\alpha_{n,p}|u|^{p'}}dx\leq C_{n,p}|\Omega|,$$
where the constant $\alpha_{n,p}$ is given by 
\begin{align}\label{alpha-np}
 \alpha_{n,p}=\frac{n}{|S^{n-1}|}\left(\frac{\Gamma(\frac {n}{2p})2^\frac np\pi^\frac n2}{\Gamma(\frac{np-n}{2p})}\right)^{p'}.
\end{align}
Moreover, the constant $\alpha_{n,p}$ is sharp in the sense that we cannot replace it with any larger one without making the above supremum infinite.}

\medskip

Notice that condition \eqref{infty} in Theorem A is sharper than only requiring that the constant $\alpha$ in the exponential is sharp, as  done in Theorem B.
In fact Martinazzi asked whether it is true that 
  \begin{align}\label{infty1}
  \sup_{u\in \tilde{H}^{\frac np,p}(\Omega),\,\|(-\Delta)^\frac{n}{2p}u\|_{L^p(\Omega)}\leq 1}
  \int_{\Omega}f(|u|)e^{\alpha_{n,p}|u|^{p'}}dx=\infty,
 \end{align}
for any $f:[0,\infty)\to[0,\infty)$ with
\begin{equation}\label{condf}
\lim_{t\to\infty}f(t)=\infty,\quad\text{$f$ is Borel measurable},
\end{equation}
and  $\alpha_{n,p}$ is given by \eqref{alpha-np}.

The point here is that Adams constructs smooth and compactly supported test functions similar to the standard Moser functions (constant in a small ball, 
and decaying logarithmically on an annulus), and then he estimates their $H^{k,\frac{n}{k}}_0$-norms in a very precise way. 
This becomes much more delicate when $k$ is not integer because instead of computing partial derivatives, one has to estimate the norms of fractional
Laplacians (the term $\|(-\Delta)^\frac{n}{2p}u\|_{L^p(\Omega)}$ in \eqref{infty1}). This is indeed done in \cite{LM}, but the test functions introduced
by Martinazzi are not efficient enough to prove \eqref{infty1}. As we shall see this has consequences for applications to PDEs.

 We shall prove that the answer to Martinazzi's question is positive, indeed in a slightly stronger form, namely the supremum in \eqref{infty1} is 
infinite even if we consider the full  $H^{\frac{n}{p},p}$-norm on the whole space. More precisely we have:

\begin{thm}\label{main-thm}
Let $\Omega$ be an open set in $\R^n$ with finite measure and let $f:[0,\infty)\to[0,\infty)$ satisfy \eqref{condf}. Then 
$$\sup_{u\in \tilde{H}^{\frac np,p}(\Omega),\,\|u\|^p_{L^p(\Omega)}+\|(-\Delta)^\frac{n}{2p}u\|_{L^p(\R^n)}^p\leq 1}
\int_{\Omega}f(|u|)e^{\alpha_{n,p}|u|^{p'}}dx=\infty,\quad 1<p<\infty,$$ where the constant $\alpha_{n,p}$ is given by \eqref{alpha-np}.
\end{thm}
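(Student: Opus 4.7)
\emph{Proof plan.} The plan is to construct an explicit family of radial Moser-type test functions $u_\e\in \tilde{H}^{\frac np,p}(\Omega)$, concentrated near a point of $\Omega$, whose central value $M_\e:=u_\e(0)$ diverges as $\e\to 0^+$ at the sharp rate determined by $\alpha_{n,p}$. After translation and dilation I may assume $B_R\subset\Omega$ for some fixed $R>0$. The guiding principle is that if $u_\e\equiv M_\e$ on $B_\e$ and one can achieve $\alpha_{n,p}M_\e^{p'}\geq n\log(1/\e)-C$ with a constant $C$ independent of $\e$, then $|B_\e|\,e^{\alpha_{n,p}M_\e^{p'}}$ is bounded below, and the factor $f(M_\e)\to\infty$ forces the integral to diverge.

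Concretely, I would take
$$u_\e(x)=c_\e\begin{cases}\log(1/\e),&|x|\leq\e,\\ \log(1/|x|),&\e\leq|x|\leq1,\\ \text{smooth decay to }0,&1\leq|x|\leq R,\\ 0,&|x|\geq R,\end{cases}$$
with $c_\e>0$ chosen to saturate $\|u_\e\|_{L^p(\Omega)}^p+\|(-\Delta)^{n/(2p)}u_\e\|_{L^p(\R^n)}^p\leq 1$. A direct polar-coordinate computation, using the substitution $t=\log(1/r)$ on the logarithmic annulus, yields $\|u_\e\|_{L^p(\Omega)}^p=O(c_\e^p)=O(1/\log(1/\e))=o(1)$, so almost the full unit budget can be spent on the fractional Laplacian term. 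With $M_\e=c_\e\log(1/\e)$ the target becomes $\alpha_{n,p}M_\e^{p'}=n\log(1/\e)+O(1)$, which is equivalent to agreement of $\|(-\Delta)^{n/(2p)}u_\e\|_{L^p(\R^n)}^p$ with its leading-order value to within a relative error $O(1/\log(1/\e))$.

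The heart of the proof, and the principal obstacle, is this sharp norm estimate. Unlike the classical Adams setting, where one merely differentiates $\log(1/|x|)$, here one must evaluate $(-\Delta)^{n/(2p)}$ of the radial logarithm through its singular-integral (or Riesz-potential) representation, and then bound the $L^p$-norm of the result with enough precision to extract an $O(1)$ error in the exponent $\alpha_{n,p}M_\e^{p'}$. This involves separately controlling the contributions of the inner cap $|x|\leq\e$, the logarithmic annulus $\e\leq|x|\leq 1$, the cut-off zone $1\leq|x|\leq R$, and---most delicately---the non-local cross terms between them. Martinazzi's test functions in \cite{LM} provide only an $o(\log(1/\e))$ error in the exponent, which is sufficient for the sharpness of $\alpha_{n,p}$ in Theorem~B but not for \eqref{infty1}. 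The refinement I have in mind is to correct the naive logarithmic profile by subtracting an explicit Green-function-type term (for $(-\Delta)^{n/(2p)}$ on $\R^n$) that cancels the leading uncontrolled error, pushing the residual down to the required $O(1/\log(1/\e))$.

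Granted the sharp estimate, the proof finishes in one line. Since $u_\e\equiv M_\e$ on $B_\e$,
$$\int_\Omega f(|u_\e|)e^{\alpha_{n,p}|u_\e|^{p'}}dx\geq f(M_\e)\,|B_\e|\,e^{\alpha_{n,p}M_\e^{p'}}\geq c\,f(M_\e),$$
which tends to $+\infty$ as $\e\to 0^+$ by $M_\e\to\infty$ together with \eqref{condf}, establishing the theorem.
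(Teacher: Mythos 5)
Your skeleton is right: construct Moser-type test functions $u_\e$ concentrated on $B_1\subset\Omega$, exploit that $\|u_\e\|^p_{L^p}=O((\log\frac1\e)^{-1})$ so the $L^p$ term is innocuous, extract a $(1+O((\log\frac1\e)^{-1}))$ bound on $\|(-\Delta)^{n/(2p)}u_\e\|^p_{L^p(\R^n)}$, and conclude by $\int_\Omega f(|u_\e|)e^{\alpha_{n,p}|u_\e|^{p'}}\,dx\geq f(M_\e)|B_\e|e^{\alpha_{n,p}M_\e^{p'}}\geq c\,f(M_\e)\to\infty$. That closing argument coincides with the paper's.

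The gap is precisely where you say the gap is, and the proposed fix does not fill it. Your plan to ``correct the naive logarithmic profile by subtracting an explicit Green-function-type term'' is, as far as it can be parsed, the approach of Iula--Maalaoui--Martinazzi, which the paper discusses and explicitly rejects for the full statement: a function built from a Green representation for $(-\Delta)^{n/p}$ is not smooth enough at $\partial\Omega$ to lie in $\tilde H^{n/p,p}(\Omega)$ when $n/p>1$, and repairing this with a smooth cutoff at the boundary degrades the exponent estimate to where one only recovers the conclusion for $f$ growing at least like $t^a$ with $a>p'$ --- not for arbitrary $f\to\infty$. Moreover the Green function (Riesz kernel) of $(-\Delta)^{n/(2p)}$ on $\R^n$ is $c|x|^{-n/p'}$, not a logarithm, so it is unclear what ``subtracting'' it would cancel; the logarithm is the fundamental solution of the \emph{full} power $(-\Delta)^{n/2}$, not of $(-\Delta)^{n/(2p)}$.

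What the paper actually does is quite different in mechanism even if similar in spirit. It defines $v_\e$ with two fixed smooth cutoffs $\varphi$ (flattening the logarithm on $B_\e$) and $\eta$ (killing it outside $B_1$), then decomposes $v_\e = (\log\frac1\e)^{-1/p}\log\frac1{|x|} + f_\e + g_\e$, where $f_\e$ is supported in $B_{2\e}$ and $g_\e$ outside $B_{1/2}$. The pure logarithm is handled by the exact identity $(-\Delta)^\sigma\log\frac1{|x|}=c_{n,\sigma}|x|^{-2\sigma}$ (Lemma~\ref{log}), whose $L^p$-norm over the annulus produces the leading term $1$ exactly. The corrections $f_\e$ and $g_\e$ are then estimated \emph{pointwise} at every scale via the singular-integral formula for $(-\Delta)^\sigma$ (Lemmas~\ref{V-near-zero}, \ref{f-epsilon}, \ref{V-epsilon}), with the decay of $(-\Delta)^\sigma f_\e$ away from its support doing the work you would call ``controlling the non-local cross terms''. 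The refined algebraic inequality $(a+b)^p\le a^p+C(b^p+a^{p-1}b)$ is what turns these pointwise bounds into the required $O((\log\frac1\e)^{-1})$ relative error. None of this is Green-function machinery, and the distinction matters: this is exactly the construction whose absence in Martinazzi's earlier paper left \eqref{infty1} open. As written, your proposal identifies the obstruction correctly but does not surmount it.
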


The main difficulty in the proof of Theorem \ref{main-thm} is to construct test and cut-off functions in a way that their fractional Laplacians of 
suitable orders can be estimated precisely. This will be done in section \ref{section2}.

Here we mention that using  a Green's representation formula, Iula-Maalaoui-Martinazzi \cite{IIM} proved a particular case of 
 Theorem \ref{main-thm} in one dimension. Their proof, though, does not extend to spaces $\tilde H^{\frac{n}{p},p}(\Omega)$ when $\frac{n}{p}>1$ because the function constructed using the Green representation formula do not enjoy enough smoothness at the boundary. Trying to solve this with a smooth cut-off function at the boundary allows to prove \eqref{infty1} only when $f$ grows fast enough at infinity (for instance $f(t)\ge t^{a}$ for some $a>p'$).
 
 \medskip
 
 Now we move to Moser-Trudinger type inequalities on domains with infinite measure.  In this direction we refer to \cite{Ruf, Lam-Lu, Mas} and the 
 references there in.  For our purpose, here we only state the work of Lam-Lu \cite{Lam-Lu}.
  \medskip
 
 \noindent\textbf{Theorem C} (\cite{Lam-Lu})
 \emph{Let $p\in (1,\infty)$ and $\tau>0$. Then for every domain $\Omega\subset\R^n$ with finite measure, there exists $C=C(n,p,\tau)>0$ such that
 $$\sup_{u\in \tilde{H}^{\frac np,p}(\R^n),\,\|(\tau I-\Delta)^\frac{n}{2p}u\|_{L^p(\R^n)}\leq 1}
\int_{\Omega}e^{\alpha_{n,p}|u|^{p'}}dx\leq C|\Omega|,$$
and 
 $$\sup_{u\in \tilde{H}^{\frac np,p}(\R^n),\,\|(\tau I-\Delta)^\frac{n}{2p}u\|_{L^p(\R^n)}\leq 1}
\int_{\R^n}\Phi(\alpha_{n,p}|u|^{p'})dx<\infty,$$
where $\alpha_{n,p}$ is given by \eqref{alpha-np} and
$$\Phi(t):=e^t-\sum_{j=0}^{j_p-2}\frac{t^j}{j!},\quad j_p:=\min\{j\in\mathbb{N}:j\geq p\}.$$ Furthermore, the constant $\alpha_{n,p}$ is sharp in the 
above inequalities, i.e., if $\alpha_{n,p}$ is replaced by any $\alpha>\alpha_{n,p}$, then the supremums are infinite.
 }

\medskip

In the spirit of Theorem \ref{main-thm} we prove a stronger version of the sharpness of the constant in Theorem C.

 \begin{thm}\label{thm-2}
 Let $\Omega\subset\R^n$ be a domain with finite measure and let $f:[0,\infty)\to[0,\infty)$ satisfy \eqref{condf}. Then
 for any  $\tau>0$ and for any $p\in(1,\infty)$ we have (with the notations as in Theorem C)
 $$\sup_{u\in \tilde{H}^{\frac np,p}(\R^n),\,\|(\tau I-\Delta)^\frac{n}{2p}u\|_{L^p(\R^n)}\leq 1}
\int_{\Omega}f(|u|)e^{\alpha_{n,p}|u|^{p'}}dx=\infty,$$ 
and 
$$\sup_{u\in \tilde{H}^{\frac np,p}(\R^n),\,\|(\tau I-\Delta)^\frac{n}{2p}u\|_{L^p(\R^n)}\leq 1}
\int_{\R^n}f(|u|)\Phi(\alpha_{n,p}|u|^{p'})dx=\infty.$$
 \end{thm}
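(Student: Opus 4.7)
The plan is to reduce Theorem~\ref{thm-2} to Theorem~\ref{main-thm} via a concentration (scaling) argument, exploiting that at high frequencies the Bessel symbol $(\tau+|\xi|^{2})^{n/(2p)}$ agrees with the fractional symbol $|\xi|^{n/p}$ up to lower-order corrections. Fix a ball $B_{r_{0}}(x_{0})\subset\Omega$ and let $u_{k}\in C_{c}^{\infty}(B_{r_{0}}(x_{0}))$ be the sequence furnished by Theorem~\ref{main-thm} (mollifying if necessary), so that
\[
\|u_{k}\|_{L^{p}(\R^{n})}^{p}+\|\D^{n/(2p)}u_{k}\|_{L^{p}(\R^{n})}^{p}\leq 1,\qquad I_{k}:=\int_{\Omega}f(|u_{k}|)e^{\alpha_{n,p}|u_{k}|^{p'}}\,dx\to\infty.
\]
I then rescale by $\lambda_{k}\to\infty$ (to be chosen): set $v_{k}(x):=u_{k}(\lambda_{k}(x-x_{0})+x_{0})$, which is supported in $B_{r_{0}/\lambda_{k}}(x_{0})\subset\Omega$ for $\lambda_{k}\geq 1$.

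A direct Fourier change of variables yields the scaling identities
\[
\|\D^{n/(2p)}v_{k}\|_{L^{p}(\R^{n})}^{p}=\|\D^{n/(2p)}u_{k}\|_{L^{p}(\R^{n})}^{p},\qquad \|(\tau I-\Delta)^{n/(2p)}v_{k}\|_{L^{p}(\R^{n})}^{p}=\|(\tau\lambda_{k}^{-2}I-\Delta)^{n/(2p)}u_{k}\|_{L^{p}(\R^{n})}^{p}.
\]
For each fixed $u_{k}\in C_{c}^{\infty}$, the second quantity tends to $\|\D^{n/(2p)}u_{k}\|_{L^{p}(\R^{n})}^{p}$ as $\lambda_{k}\to\infty$ (by pointwise convergence of the symbols $(\mu+|\xi|^{2})^{n/(2p)}\to|\xi|^{n/p}$ combined with dominated convergence on the Fourier side). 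By a diagonal extraction, passing to a subsequence of $\{u_{k}\}$ if necessary, I can pick $\lambda_{k}\to\infty$ with simultaneously
\[
\|(\tau I-\Delta)^{n/(2p)}v_{k}\|_{L^{p}(\R^{n})}^{p}\leq 1+\tfrac{1}{k}\qquad\text{and}\qquad \lambda_{k}^{n}\leq\sqrt{I_{k}}.
\]
Setting $w_{k}:=v_{k}/(1+1/k)^{1/p}$ gives $\|(\tau I-\Delta)^{n/(2p)}w_{k}\|_{L^{p}(\R^{n})}\leq 1$, supported in $\Omega$, hence admissible.

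Changing variables back in the integral,
\[
\int_{\Omega}f(|w_{k}|)e^{\alpha_{n,p}|w_{k}|^{p'}}\,dx=\lambda_{k}^{-n}\int f\!\left(\tfrac{|u_{k}|}{(1+1/k)^{1/p}}\right)e^{\alpha_{n,p}|u_{k}|^{p'}/(1+1/k)^{p'/p}}\,dy.
\]
The Adams--Moser test functions built in Section~\ref{section2} should satisfy the pointwise bound $|u_{k}|^{p'}\leq Ck$ on their support; hence $\alpha_{n,p}|u_{k}|^{p'}/(1+1/k)^{p'/p}\geq \alpha_{n,p}|u_{k}|^{p'}-C$, and the exponential loses only a bounded multiplicative constant. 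Replacing $f$ by its monotone minorant $\tilde{f}(t):=\inf_{s\geq t}f(s)$ (still admissible) and then by the dilated $t\mapsto\tilde f(t/2^{1/p})$, the same sequence $u_{k}$ (whose concentration properties are universal across admissible weights) furnishes via Theorem~\ref{main-thm} a divergent integral $I_{k}'\to\infty$ for the new weight, so the displayed quantity is bounded below by $c\lambda_{k}^{-n}I_{k}'\geq c\sqrt{I_{k}'}\to\infty$. The second supremum follows from the first: since $\Phi(0)=0$ (because $j_{p}\geq 2$ for $1<p<\infty$), the integral over $\R^{n}$ of the $\Phi$-integrand equals the integral over $\Omega$, and $\Phi(t)\geq\tfrac{1}{2}e^{t}$ for large $t$ transfers the divergence to the $\Phi$-form.

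The principal technical difficulty is the $k$-uniform control of the multiplier convergence in a way compatible with the balancing $\lambda_{k}^{n}=o(I_{k})$; the diagonal extraction handles this, at the harmless cost of thinning the sequence $\{u_{k}\}$. A secondary check is the explicit bound $|u_{k}|^{p'}\leq Ck$ from the construction in Section~\ref{section2}, which is standard for Moser-type functions but depends on the precise form of the test functions used there.
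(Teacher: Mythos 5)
Your reduction-by-concentration idea runs into a quantitative conflict that cannot be resolved, and this conflict is precisely the point where the theorem is nontrivial for general $f$.

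Write $K_{k}:=\log\frac{1}{\e_{k}}$ for the Moser parameter of $u_{k}$. For the conclusion to survive the normalization $w_{k}=v_{k}/(1+\epsilon_{k})^{1/p}$, you need $\epsilon_{k}=O(1/K_{k})$: on the concentration ball the exponent becomes
$\alpha_{n,p}|u_{k}|^{p'}/(1+\epsilon_{k})^{p'/p}\approx nK_{k}-\tfrac{p'}{p}nK_{k}\epsilon_{k}$,
and unless $K_{k}\epsilon_{k}=O(1)$ the factor $e^{-\frac{p'}{p}nK_{k}\epsilon_{k}}$ kills the divergence super-polynomially. But $\epsilon_{k}$ is governed by the error $\|(\tfrac{\tau}{\lambda_{k}^{2}}I-\Delta)^{\frac{n}{2p}}u_{k}-\D^{\frac{n}{2p}}u_{k}\|_{L^{p}}$. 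Using the multiplier $m_{\mu}(\xi)=(\mu+|\xi|^{2})^{n/(2p)}-|\xi|^{n/p}$ (H\"ormander norm $\sim\mu^{n/(2p)}$ for $n/(2p)<1$, with a harmless extra factor when $n/(2p)>1$), that error is $\gtrsim\mu^{n/(2p)}\|u_{k}\|_{L^{p}}\sim\mu^{n/(2p)}K_{k}^{-1/p}$; forcing it to be $O(1/K_{k})$ gives $\mu\lesssim K_{k}^{-2p/(np')}$, i.e.\ $\lambda_{k}^{n}\gtrsim K_{k}^{p/p'}=K_{k}^{p-1}$. On the other hand, the unrescaled Moser integral you want to beat the factor $\lambda_{k}^{-n}$ with satisfies $I_{k}\lesssim f(cK_{k}^{1/p'})$, because the exponential and the volume of the concentration ball exactly cancel and only the prefactor $f$ remains. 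Hence
\[
\lambda_{k}^{-n}I_{k}\lesssim K_{k}^{-(p-1)}f(cK_{k}^{1/p'}),
\]
which tends to $0$, not $\infty$, whenever $f$ grows slower than a power — for instance $f(t)=\log(2+t)$. Diagonal extraction and thinning the sequence cannot repair this: both sides of the inequality are functions of the single parameter $K_{k}$, so no choice of subsequence changes the balance. Your ``$\lambda_{k}^{n}\le\sqrt{I_{k}}$'' and ``$\lambda_{k}$ large enough for the multiplier convergence'' are genuinely incompatible. (Your secondary claim $|u_{k}|^{p'}\le CK_{k}$ is correct; the problem is not there.)

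The paper's proof (Proposition~\ref{prop2}) avoids scaling entirely: it estimates $\|(\tau I-\Delta)^{\frac{n}{2p}}u_{\e}\|_{L^{p}}^{p}\le 1+C(\log\frac{1}{\e})^{-1}$ directly for the \emph{unrescaled} Moser functions. The key is that $w_{\e}:=(\tau I-\Delta)^{\frac{n}{2p}}u_{\e}-\D^{\frac{n}{2p}}u_{\e}$, while only of size $K^{-1/p}$ in $L^{p}$, is controlled \emph{pointwise} (Lemma~\ref{pointwise} together with the $L^{p}$ estimate of Lemma~\ref{Lp}), and the refined elementary inequality $(a+b)^{p}\le a^{p}+C(b^{p}+ab^{p-1}+b^{1/2}a^{p-1/2})$ produces cross terms $\int|\D^{\frac{n}{2p}}u_{\e}|\,|w_{\e}|^{p-1}$ and $\int|\D^{\frac{n}{2p}}u_{\e}|^{p-\frac12}|w_{\e}|^{1/2}$ that are $O(1/K)$ despite $\|w_{\e}\|_{L^{p}}\not=O(1/K)$. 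That pointwise decay information is exactly what a crude multiplier bound (or a rescaling) cannot see, and is what you would need to add to make your argument work.
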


As an application of Theorem \ref{main-thm} (in the case $p=2$ and $f(t)=t^2$, compare to \eqref{estimates} below) we prove the existence of (weak) solution to a semilinear elliptic equation with exponential nonlinearity.
In order to state the theorem first we need the following definition. 

 \begin{defn}
 Let $\Omega$ be an open set  in $\R^n$ with finite measure.  Let $f\in L^{p}(\Omega)$ for some $p\in(1,\infty)$. We say that $u$ is a weak solution of 
 \begin{align}\notag
   \D^\frac{n}{2}u=f \,\text{ in }\Omega, 
  \end{align}
  if $u\in \tilde{H}^{\frac n2,2}(\Omega)$ satisfies 
  $$\int_{\R^n}\D^\frac n4u\D^\frac n4v dx=\int_{\Omega}fvdx\quad\text{for every }v\in\tilde{H}^{\frac n2,2}(\Omega).$$
 \end{defn}

 \begin{thm}\label{thm-3} 
 Let $\Omega$ be an open set in $\R^n$ with finite measure.
  Let $0<\lambda<\lambda_1$ and $b>0$. Then there exists a nontrivial weak solution to the problem 
  \begin{align}\label{eqn}
   \D^\frac{n}{2}u=\lambda ue^{bu^2} \,\text{ in }\Omega.
  \end{align}
 \end{thm}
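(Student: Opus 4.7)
The plan is a variational mountain-pass argument for the functional
$$J(u) := \frac12\int_{\R^n}|\D^{n/4}u|^2\,dx \;-\; \frac{\lambda}{2b}\int_\Omega\bigl(e^{bu^2}-1\bigr)\,dx$$
on $\tilde H^{n/2,2}(\Omega)$. By Theorem B with $p=2$ the nonlinear term is finite, $J\in C^1$, and critical points of $J$ are precisely the weak solutions of \eqref{eqn}. I would first verify mountain-pass geometry: for $\|\D^{n/4}u\|_{L^2}$ small, expanding $e^{bu^2}-1=bu^2+O(u^4)$, using Moser-Trudinger to control the remainder, and invoking the variational inequality $\lambda_1\|u\|_{L^2(\Omega)}^2\le\|\D^{n/4}u\|_{L^2(\R^n)}^2$, the hypothesis $\lambda<\lambda_1$ yields $J(u)\ge c_0\|\D^{n/4}u\|_{L^2}^2$ on a sufficiently small sphere. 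Since $J(t\varphi)\to-\infty$ as $t\to\infty$ for any $\varphi\in C^\infty_c(\Omega)\setminus\{0\}$, the mountain-pass theorem produces a Palais-Smale sequence $(u_k)$ at a positive level $c^*$.

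The Palais-Smale condition fails globally but holds below the threshold $c^\dagger:=\alpha_{n,2}/(2b)$: if $(u_k)$ is a PS sequence at level $c<c^\dagger$, the Ambrosetti-Rabinowitz-type condition $uf(u)\ge\theta F(u)$ for $|u|$ large (which holds for some $\theta>2$ here) bounds $\|u_k\|$, and one shows $\|\D^{n/4}u_k\|_{L^2}^2$ eventually lies below $\alpha_{n,2}/b$, so by Theorem B the quantity $e^{bu_k^2}$ is bounded in some $L^q$ with $q>1$. Combined with weak convergence $u_k\rightharpoonup u$ and a.e.\ convergence, Vitali's theorem lets one pass to the limit in $J'(u_k)\to 0$ and identify $u$ as a weak solution. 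Nontriviality: if $u\equiv 0$, strong $L^2$ convergence together with the above equi-integrability forces $\int_\Omega u_k^2 e^{bu_k^2}\,dx\to 0$, so $\langle J'(u_k),u_k\rangle\to 0$ gives $\|\D^{n/4}u_k\|_{L^2}\to 0$ and hence $J(u_k)\to 0$, contradicting $c^*>0$.

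The main obstacle, and the place where Theorem \ref{main-thm} is indispensable, is establishing the crucial inequality $c^*<c^\dagger$. For this I would take the Moser-type test functions $u_\e$ constructed in Section \ref{section2}, normalized so that $\|u_\e\|_{L^2(\Omega)}^2+\|\D^{n/4}u_\e\|_{L^2(\R^n)}^2\le 1$, which by Theorem \ref{main-thm} applied with $p=2$ and $f(t)=t^2$ satisfy
$$\int_\Omega u_\e^2\, e^{\alpha_{n,2}u_\e^2}\,dx \;\longrightarrow\;\infty \quad\text{as }\e\to 0.$$
Computing the maximum $\max_{t\ge 0}J(tu_\e)$ via the first-order condition $t^2\lambda\int_\Omega u_\e^2 e^{bt^2u_\e^2}\,dx = t^2\|\D^{n/4}u_\e\|_{L^2}^2$, the divergence of the above integral, together with $\lambda<\lambda_1$ used to absorb the $L^2(\Omega)$-part of the constraint into the $\D^{n/4}$-norm, forces $\max_{t\ge 0}J(tu_\e)<c^\dagger$ for $\e$ small enough, so $c^*<c^\dagger$. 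This delicate estimate — essentially the $f(t)=t^2$ refinement of sharpness provided by Theorem \ref{main-thm} rather than the mere sharpness of the exponent proved by Martinazzi — is where I expect all the computational effort to concentrate, and it is precisely why the earlier test functions of Lam-Lu / Martinazzi do not suffice to handle the existence problem at the fractional level.
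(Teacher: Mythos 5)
Your plan is a genuine alternative to the paper's proof, not a rediscovery of it. The paper does \emph{not} run a mountain-pass argument; it works on the Nehari manifold $S=\{u\neq 0:\,F(u)=DJ(u)(u)=0\}$, shows $0<s^{2}:=2\inf_{S}J<\alpha_{0}/b$, proves (via the implicit function theorem applied to $F(\gamma u+tv)$) that a minimizer of $J$ on $S$ is a critical point of $J$, and then establishes strong convergence of a minimizing sequence through a two-case analysis ($J(u)\le 0$ and $J(u)>0$, the second case invoking a fractional Lions concentration–compactness lemma, Lemma~\ref{lions}). Both your route and the paper's use Theorem~\ref{main-thm} with $p=2$, $f(t)=t^{2}$ for exactly the same purpose: to push the relevant energy level strictly below $\alpha_{0}/(2b)$. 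In the paper this appears as estimate~\eqref{estimates}, which forces $s^{2}<\alpha_{0}/b$; in your sketch it forces $\max_{t}J(tu_{\varepsilon})<\alpha_{0}/(2b)$. Your version is in some respects lighter, because by stopping at \emph{weak} convergence of the Palais–Smale sequence you avoid the paper's Case~1/Case~2 dichotomy and the use of Lions' lemma altogether — you only need to pass to the limit in $DJ(u_{k})(v)\to 0$ and then rule out $u\equiv 0$ using $c^{*}>0$.

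One claim in your write-up is stated too broadly: ``one shows $\|\D^{n/4}u_{k}\|_{L^{2}}^{2}$ eventually lies below $\alpha_{n,2}/b$, so by Theorem B the quantity $e^{bu_{k}^{2}}$ is bounded in some $L^{q}$ with $q>1$.'' This is correct only in the case $u\equiv 0$. From $F(u_{k})\to 0$, $J(u_{k})\to c^{*}$ and Lemma~\ref{conv1}(ii) one gets $\lim\|u_{k}\|^{2}=2c^{*}+\frac{\lambda}{b}\int_{\Omega}(e^{bu^{2}}-1)\,dx$, and when $u\neq 0$ the second term need not be small, so $\lim\|u_{k}\|^{2}$ may well exceed $\alpha_{0}/b$. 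Fortunately this step is not needed for passing to the limit: boundedness of $\int_{\Omega}u_{k}^{2}e^{bu_{k}^{2}}\,dx$ (which follows from $F(u_{k})\to 0$ and boundedness of $\|u_{k}\|$) already gives $L^{1}$-equi-integrability of $u_{k}e^{bu_{k}^{2}}$ by a Chebyshev cut-off at level $M$, and together with a.e.\ convergence this lets you pass to the limit in $\int_{\Omega}g(u_{k})v\,dx$ for bounded $v$; one then extends to all $v\in\tilde H^{n/2,2}(\Omega)$ using that $ue^{bu^{2}}\in L^{2}(\Omega)$ (which follows from Lemma~\ref{orlicz}) and density. So reserve the norm bound $\|u_{k}\|^{2}<\alpha_{0}/b$ for the nontriviality step, where $u\equiv 0$ is assumed and it actually holds, and the argument closes.
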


 Due to the fact that the embedding $\tilde{H}^{\frac n2,2}(\Omega)\hookrightarrow L^2(\Omega)$ is compact for any open set $\Omega$ with finite
 measure (see Lemma \ref{compact} in Appendix), we do not need any regularity assumption or boundedness assumption on the domain $\Omega$.

The equation \eqref{eqn} has been well studied by several authors in even and odd dimensions, with emphasis both on existence and
compactness properties see e.g. \cite{Adi-Str, dru, IS, Lakkis, Luca-Armin, MM, mar4, MS2, RS, str2}. For instance, Lakkis \cite{Lakkis}, 
extending a work of Adimurthi \cite{Adi}, proved the existence 
 of solution to \eqref{eqn} in any even dimension. In a recent work Iannizzotto-Squassina \cite{IS} have proven existence of
 nontrivial weak solution of \eqref{eqn} with $\Omega=(0,1)$ under an assumption, which turns out to be satisfied thanks
 to our Theorem \ref{main-thm}, applied with $p=2$ (see Lemma \ref{Ian-Squa}).
 
 \section{Moser type functions and proof of Theorems \ref{main-thm}, \ref{thm-2}}\label{section2}
 We construct Moser type functions as follows:
 
 First we fix two smooth functions $\eta$ and $\varphi$ such that $0\leq \eta,\,\varphi\leq 1$,
 $$\eta\in C_c^\infty(-1,1),\quad\eta=1 \text{ on }(-\frac34,\frac34),$$ and
 $$\varphi\in C_c^\infty ((-2,2)),\quad \varphi=1,\text{ on }(-1,1).$$
For $\varepsilon>0$, we set         
$$\psi_\varepsilon(t)=\left\{\begin{array}{ll}
                                                      1-\varphi_\varepsilon (t) &\text{ if }0\leq t\leq\frac12\\
                                                   \eta(t) &\text{ if }t\geq \frac12,
                                                      \end{array}\right.$$       
    and 
     \begin{align}
 v_\varepsilon(x)=\left(\log\frac 1\varepsilon\right)^{-\frac{1}{p}}\left(\log \left(\frac1\varepsilon\right)\varphi_\varepsilon(|x|)+
  \log\left(\frac{1}{|x|}\right)\psi_\varepsilon(|x|)\right)\quad x\in \R^n,\notag
\end{align}
     where $$\varphi_\varepsilon(t)=\varphi(\frac t\varepsilon).$$
     
     Our aim is to show that  the supremums (in Theorems \ref{main-thm} and \ref{thm-2}) taken over the functions
    $\{v_\e\}_{\e>0}$ (up to a proper normalization) are infinite.
    
  The following proposition is crucial in the proof of Theorem \ref{main-thm}.     
 \begin{prop} \label{prop1}
Let $$u_{\e}(x):=|S^{{n-1}}|^{-\frac1p}2^{\frac{n}{p'}}\pi^{\frac n2}\Gamma(\frac{n}{2p'})\frac{1}{\Gamma(\frac{n}{2p})\gamma_{n}}v_{\e}(x).$$
 Then for $1<p<\infty$ there exists a constant $C>0$ such that
  $$\|\D^\frac{n}{2p}u_\e\|_{L^p(\R^n)}\leq \left(1+C\left(\log\frac{1}{\e}\right)^{-1}\right)^\frac 1p.$$
 \end{prop}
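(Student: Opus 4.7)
The plan rests on the distributional identity
\begin{equation*}
\D^{s}\log(1/|x|)=c_{n,s}\,|x|^{-2s},\qquad s:=\tfrac{n}{2p},
\end{equation*}
with $c_{n,s}$ an explicit constant involving $\Gamma$-values; the factor $\gamma_n$ in the prefactor of $u_\e$ is chosen precisely so that $c_{n,s}^{p}|S^{n-1}|$ together with the normalization reduces to $1$. The target estimate is therefore equivalent to $\|\D^{s}v_\e\|_{L^p(\R^n)}^{p}=c_{n,s}^{p}|S^{n-1}|+O((\log(1/\e))^{-1})$.

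The first step is to rewrite $v_\e$ in the form $v_\e(x)\cdot(\log(1/\e))^{1/p}=L(x)\eta(|x|)+G(x/\e)$, where $L(x):=\log(1/|x|)$ and $G(y):=\varphi(|y|)\log|y|$ is a fixed smooth function, compactly supported in $B_{2}(0)$, with a logarithmic singularity at the origin. This identity follows from a short algebraic check using $\psi_\e=1-\varphi_\e$ on $[0,\tfrac12]$ and $\eta=1$ on $[-\tfrac34,\tfrac34]$. The decomposition is designed so that the log-singularities of $L$ and of $G(\cdot/\e)$ exactly cancel, reflecting the boundedness of $v_\e$. By the scaling $\D^{s}[f(\cdot/\e)](x)=\e^{-2s}(\D^{s}f)(x/\e)$ and the relation $2sp=n$,
\begin{equation*}
\|\D^{s}[G(\cdot/\e)]\|_{L^p(\R^n)}^{p}=\e^{n-2sp}\|\D^{s}G\|_{L^p(\R^n)}^{p}=\|\D^{s}G\|_{L^p(\R^n)}^{p},
\end{equation*}
and $\D^{s}G\in L^{p}(\R^n)$ is verified directly from the singular-integral representation, using $G$'s logarithmic modulus of continuity near the origin and its compact support.

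For the main piece $L\eta=L-L(1-\eta)$, the first summand contributes $\D^{s}L=c_{n,s}|x|^{-n/p}$ by the identity, while $L(1-\eta)$ is smooth off a neighbourhood of infinity with at worst logarithmic growth, so $\D^{s}(L(1-\eta))\in L^{p}(\R^n)$ with a bound independent of $\e$. After the overall prefactor $(\log(1/\e))^{-1/p}$, the leading pointwise expression for $\D^{s}v_\e(x)$ on $\e\le|x|\le 1$ is $(\log(1/\e))^{-1/p}c_{n,s}|x|^{-n/p}$, whose $p$-th power integrates to
\begin{equation*}
(\log(1/\e))^{-1}c_{n,s}^{p}|S^{n-1}|\log(1/\e)=c_{n,s}^{p}|S^{n-1}|+O((\log(1/\e))^{-1}).
\end{equation*}

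The main obstacle is that Minkowski's inequality alone would yield the weaker error $O((\log(1/\e))^{-1/p})$. To get the sharp $O((\log(1/\e))^{-1})$ one must argue pointwise: bound the remainder $r_\e(x):=\D^{s}v_\e(x)-(\log(1/\e))^{-1/p}c_{n,s}|x|^{-n/p}\chi_{\{\e\le|x|\le 1\}}(x)$ by $|r_\e(x)|\lesssim(\log(1/\e))^{-1/p}\e^{n}|x|^{-n-2s}$ for $|x|\ge 2\e$ (using the decay of $\D^{s}G$ at infinity coming from the compactness of $\operatorname{supp}G$) and $|r_\e(x)|\lesssim(\log(1/\e))^{-1/p}\e^{-2s}$ for $|x|\le 2\e$. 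Then expanding $|m+r|^{p}\le m^{p}+p\,m^{p-1}|r|+C|r|^{p}$ (with the standard variant for $1<p<2$) and computing $\int m^{p-1}|r|\,dx$ region by region, the exponents of $\e$ cancel thanks to $2sp=n$, yielding the required $O((\log(1/\e))^{-1})$. This region-by-region pointwise analysis is the technical heart of the proof.
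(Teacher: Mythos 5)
Your decomposition $(\log(1/\e))^{1/p}v_\e=L\eta+G(\cdot/\e)$ is algebraically correct and is the same device the paper uses (in the notation there, $L\eta+G(\cdot/\e)=(\log(1/\e))^{1/p}\log(1/|x|)+g_\e+f_\e$, so your $G(\cdot/\e)$ is $f_\e$ and $L\eta-L$ is $g_\e$). However, two of the assertions you rest on are false, and they are false precisely because $2sp=n$ is the critical exponent. You claim $\D^sG\in L^p(\R^n)$; but near the origin $G(y)=\log|y|$, hence $\D^sG(y)=-c_{n,s}|y|^{-2s}+O(1)$ there, and $\int_{B_1}|y|^{-2sp}\,dy=\int_{B_1}|y|^{-n}\,dy=\infty$. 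So the scaling identity $\|\D^s[G(\cdot/\e)]\|_{L^p}^p=\|\D^sG\|_{L^p}^p$ reads $\infty=\infty$ and gives nothing. Likewise you claim $\D^s(L(1-\eta))\in L^p(\R^n)$; but for $|x|$ large $L(1-\eta)=\log(1/|x|)$, so $\D^s(L(1-\eta))\sim c_{n,s}|x|^{-2s}$ at infinity, and again $\int_{|x|>1}|x|^{-n}\,dx=\infty$. Neither summand is separately in $L^p$; only the full $v_\e$ gives a finite norm, because $v_\e$ is bounded (the log singularities of $L\eta$ and $G(\cdot/\e)$ cancel near $0$) and $v_\e$ is compactly supported (which cures the slow decay at infinity). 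The paper sidesteps this entirely by estimating $\D^s v_\e$ directly on $B_{3\e}$ (Lemma \ref{V-near-zero}, using boundedness of $v_\e$) and on $B_2^c$ (Lemma \ref{V-epsilon}, using compact support), and only invokes the $L+R_\e$ splitting in the bounded annulus $3\e<|x|<2$ where no integrability issue arises.

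A second, related inaccuracy appears in your pointwise remainder bound: for $2\e\le|x|\le1$ you claim $|r_\e(x)|\lesssim(\log(1/\e))^{-1/p}\e^n|x|^{-n-2s}$, which accounts only for the rescaled $G$ piece and omits the $\e$-independent contribution $-(\log(1/\e))^{-1/p}\D^s(L(1-\eta))(x)$, which is of size $O((\log(1/\e))^{-1/p})$ on that annulus (it is bounded on $|x|\le 1/2$ and at most $\lesssim|x|^{-2s}$ on $1/2\le|x|\le 1$, but certainly not $O(\e^n)$). The architecture you propose — expand $|m+r|^p\le m^p+p\,m^{p-1}|r|+C|r|^p$ and estimate the cross term $\int m^{p-1}|r|$ region by region, which is exactly the paper's inequality $(a+b)^q\le a^q+C_q(b^q+a^{q-1}b)$ — is the right and essential idea, and with the corrected bound $|r_\e(x)|\lesssim(\log(1/\e))^{-1/p}\bigl(\e^{n-2m}|x|^{-n-2s}+1\bigr)$ on $3\e\le|x|\le2$ and $|r_\e(x)|\lesssim(\log(1/\e))^{-1/p}|x|^{-n-2s}$ on $|x|>2$ the argument does close, since $\int_{3\e<|x|<2}|x|^{-n/p'}\,dx=O(1)$. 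But as written, the $L^p$ memberships you cite are not true, and the pointwise estimate is optimistic by a factor that would need to be repaired before the proof is correct.
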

\begin{proof}
Since the proof of above proposition is quite trivial if $\frac{n}{2p}$ is an integer, from now on we only consider the case when $\frac{n}{2p}$ is not an integer.

 From Lemmas \ref{V-near-zero} and \ref{V-epsilon} (below) we have 
 $$\|\D^\frac{n}{2p}u_\e\|_{L^p(B_{3\e}\cup B_2^c )}^p\leq C\left(\log\frac{1}{\e}\right)^{-1}.$$
In order to estimate $\D^\sigma v_\e$ on the domain $\left\{x:3\e<|x|<2\right\}$ we consider the function 
\begin{align}
R_\e(x)=v_\varepsilon(x)- \left(\log\frac 1\varepsilon\right)^{-\frac{1}{p}}\log\frac{1}{|x|}=:f_\e(x)+g_\e(x)\quad x\in\R^n, \notag
\end{align}
 where
\begin{align}
f_\varepsilon(x):&=\left\{\begin{array}{ll}\notag
                            v_\varepsilon(x)- \left(\log\frac 1\varepsilon\right)^{-\frac{1}{p}}\log\frac{1}{|x|} &\text{ if }|x|<2\varepsilon\\
                            0                  &\text{ if }|x|\geq 2\varepsilon,
                           \end{array}\right.\\
                &= \left(\log\frac 1\e\right)^{-\frac{1}{p}}\left(\log\frac 1\e-\log\frac{1}{|x|}\right)\varphi_\e(|x|)       \notag   
\end{align}
and 
\begin{align}
g_\varepsilon(x):&=\left\{\begin{array}{ll}\notag
                            v_\varepsilon(x)- \left(\log\frac 1\varepsilon\right)^{-\frac{1}{p}}\log\frac{1}{|x|} &\text{ if }|x|>\frac12\\
                            0                  &\text{ if }|x|\leq \frac12
                           \end{array}\right.\\
                 &=\left(\log\frac 1\e\right)^{-\frac{1}{p}}\left(\eta(|x|)-1\right)\log\frac{1}{|x|}.\notag
\end{align}
It is easy to see that for any $\sigma>0$ 
 \begin{align}\label{est-g}
  \sup_{x\in\R^n}|(-\Delta)^\sigma g_\varepsilon(x)|\leq C\left(\log\frac 1\varepsilon\right)^{-\frac{1}{p}}.
 \end{align}
With the help of Lemma \ref{log} and the triangle inequality we bound
\begin{align}
 |\D^\frac{n}{2p}u_\e(x)|&=\frac{1}{|S^{n-1}|^\frac1p\beta_{n,\frac{n}{2p}}}\left|\D^\frac{n}{2p}R_\e(x)+\left(\log\frac 1\e\right)^{-\frac 1p}
\D^\frac{n}{2p}\log\frac{1}{|x|}\right|\notag\\
&\leq C|\D^\frac{n}{2p}R_\e(x)|+\left(\log\frac 1\e\right)^{-\frac 1p}\frac{1}{|S^{n-1}|^\frac1p}\frac{1}{|x|^\frac np}.\notag
\end{align}
Using the elementary inequality 
$$(a+b)^q\leq a^q+C_q(b^q+a^{q-1}b),\quad 1\leq q<\infty,\,a\geq 0,\,b\geq 0,$$
we get
\begin{align}
 &\int_{3\e<|x|<2}|\D^\frac{n}{2p}u_\e(x)|^pdx\notag\\
 &\leq \int_{3\e<|x|<2}\left(\log\frac 1\e\right)^{-1}\frac{1}{|S^{n-1}|}\frac{1}{|x|^n} dx+
             C\int_{3\e<|x|<2}|\D^\frac{n}{2p}R_\e(x)|^pdx\notag\\
 &\quad+C\left(\log\frac 1\e\right)^{-\frac{1}{p'}}\int_{3\e<|x|<2}\frac{1}{|x|^\frac{n}{p'}}|\D^\frac{n}{2p}R_\e(x)|dx\notag\\
  &\leq 1+C\left(\log\frac 1\e\right)^{-1}+C\left(\log\frac 1\e\right)^{-\frac{1}{p'}}\int_{3\e<|x|<2}\frac{1}{|x|^\frac{n}{p'}}|\D^\frac{n}{2p}R_\e(x)|dx, \notag
\end{align} 
where the last inequality follows from Lemma \ref{f-epsilon} (below). 
Using the pointwise estimate  in Lemma \ref{f-epsilon} and \eqref{est-g} one can show that 
$$\int_{3\e<|x|<2}\frac{1}{|x|^\frac{n}{p'}}|\D^\frac{n}{2p}R_\e(x)|dx\leq C\left(\log\frac 1\e\right)^{-\frac 1p},$$
which completes the proof.
\end{proof}

\begin{lem}\label{V-near-zero}
 Let $p\in(1,\infty)$. Then there exists a constant $C=C(n,p,\sigma)>0$ such that 
 $$|(-\Delta)^{\sigma}v_\e(x)|\leq C\left(\log\frac 1\varepsilon\right)^{-\frac{1}{p}}\e^{-2\sigma}\quad \text{for }|x|\leq3\e,\, 0<\sigma<\frac n2.$$
 Moreover,
 $$\|\D^\frac{n}{2p} v_\e\|^p_{L^p(B_{3\e})}\leq C\left(\log\frac 1\e\right)^{-1}.$$
\end{lem}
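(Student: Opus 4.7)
The approach is to reveal a telescoping cancellation in the two log-scales of $v_\e$, reducing the pointwise bound inside $B_{3\e}$ to one for fixed functions (up to rescaling by $\e$). Since $3\e<\tfrac12$ for $\e$ small, we have $\psi_\e(|x|)=1-\varphi_\e(|x|)$ on $B_{1/2}$, and hence
$$v_\e(x)=\left(\log\tfrac{1}{\e}\right)^{-\frac{1}{p}}\!\!\left[\log\tfrac{1}{|x|}+\log\tfrac{|x|}{\e}\,\varphi(|x|/\e)\right]\quad\text{for }|x|\le\tfrac12.$$
Setting $G(x):=\log\tfrac{1}{|x|}\eta(|x|)$ and $H(y):=\log|y|\,\varphi(|y|)$, one verifies that $v_\e(x)=(\log\tfrac{1}{\e})^{-1/p}[G(x)+H(x/\e)]$ globally on $\R^n$. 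I would then decompose $G=\log\tfrac{1}{|\cdot|}+G_0$ and $H=\log|\cdot|+H_0$, where $G_0(x)=\log\tfrac{1}{|x|}(\eta(|x|)-1)$ vanishes on $B_{3/4}$ and $H_0(y)=\log|y|(\varphi(|y|)-1)$ vanishes on $B_1$. The key identity $\log\tfrac{1}{|x|}+\log|x/\e|=\log\tfrac{1}{\e}$ then yields
$$v_\e(x)=\left(\log\tfrac{1}{\e}\right)^{\frac{1}{p'}}+\left(\log\tfrac{1}{\e}\right)^{-\frac{1}{p}}\bigl[G_0(x)+H_0(x/\e)\bigr].$$
Because constants are annihilated by $\D^\sigma$ and $\D^\sigma[f(\cdot/\e)](x)=\e^{-2\sigma}(\D^\sigma f)(x/\e)$,
$$\D^\sigma v_\e(x)=\left(\log\tfrac{1}{\e}\right)^{-\frac{1}{p}}\Bigl[\D^\sigma G_0(x)+\e^{-2\sigma}(\D^\sigma H_0)(x/\e)\Bigr].$$

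It then suffices to show $\|\D^\sigma G_0\|_{L^\infty(B_{3\e})}\le C$ and $\|\D^\sigma H_0\|_{L^\infty(B_3)}\le C$. The former holds because $G_0$ vanishes on $B_{3/4}$ and grows only logarithmically at infinity, so for $|x|\le 3\e\ll 3/4$ the pointwise representation of $\D^\sigma G_0(x)$ involves only a non-singular kernel integrated against $G_0(y)$ over $\{|y|\ge 3/4\}$. The latter follows from $H_0\in C^\infty(\R^n)$ (the origin-singularity of $\log|y|$ is killed since $H_0\equiv 0$ on $B_1$) and $H_0\in L_s(\R^n)$ (because $H_0(y)\sim -\log|y|$ at infinity), so $\D^\sigma H_0$ is well-defined via \eqref{welldefined} and pointwise bounded on any compact set by standard local regularity for smooth $L_s$ functions. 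Combining gives $|\D^\sigma v_\e(x)|\le C(\log\tfrac{1}{\e})^{-1/p}\e^{-2\sigma}$ on $B_{3\e}$; raising to the $p$-th power with $\sigma=n/(2p)$, integrating over $B_{3\e}$ and using $|B_{3\e}|\simeq\e^n$ yields the second claim.

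The main conceptual point I expect to be the obstacle is identifying this telescoping: a naive estimate based on $|v_\e|\sim(\log\tfrac1\e)^{1/p'}$ on $B_{3\e}$ would give $|\D^\sigma v_\e|\lesssim(\log\tfrac1\e)^{1/p'}\e^{-2\sigma}$ there, a full factor of $\log\tfrac1\e$ too large for the subsequent $L^p$ argument of Proposition \ref{prop1}. Recognizing that the leading $(\log\tfrac1\e)^{1/p'}$-amplitude of $v_\e$ is really an additive constant on $B_{3\e}$ (and contributes only through the smooth correction $G_0(x)+H_0(x/\e)$) is what saves this factor. The technical care then lies in justifying the cancellation at the distributional level for non-integer $\sigma$ and $L_s$-class functions, handled via \eqref{welldefined} and Lemma \ref{log}.
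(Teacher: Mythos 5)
Your proof is correct, and it takes a genuinely different route from the paper's. The paper keeps $v_\e$ intact: it first records the derivative bounds $|D^\alpha v_\e|\leq C(\log\frac1\e)^{-1/p}\e^{-|\alpha|}$ (which already encode the cancellation you describe, since $D^\alpha(\varphi_\e+\psi_\e)=0$ on $B_{1/2}$), then writes $\sigma=m+s$ and estimates the second-difference representation of $\D^s(\D^m v_\e)$ directly, splitting the $y$-integral over the annuli $\{|y|\leq 2\e\}$, $\{2\e<|y|\leq\frac14\}$, $\{|y|>\frac14\}$ and treating the cases $m\geq1$ and $m=0$ separately (the $m=0$ middle annulus requiring a somewhat delicate bound on $\log(\e/|x+y|)\psi_\e(|x+y|)$). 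You instead make the cancellation exact and algebraic: the identity $v_\e=(\log\frac1\e)^{1/p'}+(\log\frac1\e)^{-1/p}[G_0+H_0(\cdot/\e)]$ checks out on all of $\R^n$ (I verified it region by region), the constant is annihilated by $\D^\sigma$, and scaling reduces everything to the boundedness of $\D^\sigma$ of the two fixed, $\e$-independent smooth functions $G_0$ and $H_0$ on compact sets. This avoids the paper's case analysis entirely; note that your bound on $\D^\sigma G_0$ is literally the paper's own unproved assertion \eqref{est-g} (since $g_\e=(\log\frac1\e)^{-1/p}G_0$), so you are on the same footing as the author there. The only points deserving one more line of care are (i) justifying $\D^\sigma(H_0(\cdot/\e))(x)=\e^{-2\sigma}(\D^\sigma H_0)(x/\e)$ and the linearity at the level of the duality \eqref{welldefined}, which is routine since constants, $\log\frac1{|\cdot|}$, $G_0$ and $H_0$ all lie in $L_\sigma(\R^n)$, and (ii) spelling out, for $\sigma=m+s$ with $m\geq1$, that $\D^mG_0$ and $\D^mH_0$ are smooth with $O(|y|^{-2m})$ decay so that the second-difference integrals defining $\D^s$ of them are uniformly bounded on the relevant balls. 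The concluding $L^p$ step is identical to the paper's.
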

\begin{proof}
 We claim that for every nonzero multiindex $\alpha\in\mathbb{N}^n$ there exists $C=C(n,\alpha)>0$ such that 
 \begin{align}
  |D^\alpha v_\e(x)|\leq C\left(\log\frac 1\varepsilon\right)^{-\frac{1}{p}}\e^{-|\alpha|},\quad x\in \R^n.\label{ve}
 \end{align}

 The claim follows from the fact that $D^\alpha (\varphi_\e+\psi_\e)=0$ on $B_\frac12$ and hence we have the lemma if $\sigma$ is an integer.
 In the case when $\sigma$ is not a integer then we write $\sigma=m+s$ where $0<s<1$ and $m$ is an nonnegative integer. Then for $|x|\leq 3\e$ we have
  (the following equivalent definition of fractional Laplacian can be found in \cite{Sil, Valdinoci})
 \begin{align}
(-\Delta)^{\sigma}v_\e(x)=C_{n,s}\int_{\R^n}\frac{\D^mv_\e(x+y)+\D^mv_\e(x-y)-2\D^mv_\e(x)}{|y|^{n+2s}}dy.\notag
 \end{align}
$$A_1=\left\{x:|x|\leq 2\e\right\},\quad A_2=\left\{x:2\e<|x|\leq \frac14\right\}\text{ and }A_3=\left\{x:|x|>\frac14\right\},$$ we have
$$(-\Delta)^{\sigma}v_\e(x)=C_{n,s}\sum_{i=1}^3I_i,$$ where $$I_i:=\int_{A_i}\frac{\D^mv_\e(x+y)+\D^mv_\e(x-y)-2\D^mv_\e(x)}{|y|^{n+2s}}dy.$$
For $y\in A_1$, using \eqref{ve} we have
\begin{align}
|\D^mv_\e(x+y)+\D^mv_\e(x-y)-2\D^mv_\e(x)|&\leq|y|^2\|D^2\D^m v_\e\|_{L^\infty}\notag\\
&\leq C|y|^2\e^{-2m-2}\left(\log\frac 1\varepsilon\right)^{-\frac{1}{p}},\notag
\end{align}
and hence
\begin{align}
 |I_1|\leq C\e^{-2m-2}\left(\log\frac 1\varepsilon\right)^{-\frac{1}{p}}\int_{A_1}\frac{dy}{|y|^{n+2s-2}}\leq C\left(\log\frac 1\e\right)^{-\frac{1}{p}}\e^{-2\sigma}.\notag
\end{align}
For $m\geq 1$, again by \eqref{ve}
\begin{align}
& |\D^mv_\e(x+y)-\D^mv_\e(x)|
  \leq C\left(\log\frac{1}{\e}\right)^{-\frac1p}\e^{-2m}.\notag
\end{align}
Therefore,
\begin{align}
 |I_2+I_3|\leq C\left(\log\frac{1}{\e}\right)^{-\frac1p}\e^{-2m}\int_{|y|>\e}\frac{dy}{|y|^{n+2s}}\leq C\left(\log\frac 1\e\right)^{-\frac{1}{p}}\e^{-2\sigma}.\notag
\end{align}
  Since on $A_2$ $|x+y|\leq 3\e+\frac 14<\frac12$, one has
\begin{align}
 &\left(\log\frac{1}{\e}\right)^{\frac1p}|v_\e(x+y)-v_\e(x)|\notag\\
 &= \left|\log\left(\frac1\e\right)\left(\varphi_\e(|x+y|)+\psi_\e(|x+y|)-\varphi_\e(|x|)- \psi_\e(|x|)\right)\right.\notag\\
& \quad \left.+\log\left(\frac{\e}{|x+y|}\right)\psi_\e(|x+y|)-\log\left(\frac{\e}{|x|}\right)\psi_\e(|x|)\right|\notag\\
&=\left|\log\left(\frac{\e}{|x+y|}\right)\psi_\e(|x+y|)-\log\left(\frac{\e}{|x|}\right)\psi_\e(|x|)\right|\notag\\
&\leq C+\left|\log\left(\frac{\e}{|x+y|}\right)\psi_\e(|x+y|)\right|.\notag
\end{align}
Hence, for $m=0$, changing the variable $y\mapsto \e z$
\begin{align}
 |I_2|&\leq C\left(\log\frac{1}{\e}\right)^{-\frac1p}\e^{-2s}+
  C\left(\log\frac{1}{\e}\right)^{-\frac1p}\int_{\e<|y|<\frac14}\frac{\left|\log\left(\frac{\e}{|x+y|}\right)\psi_\e(|x+y|)\right|}{|y|^{n+2s}}dy\notag\\
  &\leq C\left(\log\frac{1}{\e}\right)^{-\frac1p}\e^{-2s}+ C\left(\log\frac{1}{\e}\right)^{-\frac1p}\e^{-2s}
    \int_{|z|>1}\frac{\left|\log|\frac x\e+z|\right|\psi_\e(\e|\frac x\e+z|)}{|z|^{n+2s}}dz\notag\\
  &\leq C\left(\log\frac{1}{\e}\right)^{-\frac1p}\e^{-2s}+ C\left(\log\frac{1}{\e}\right)^{-\frac1p}\e^{-2s}
    \int_{|z|>1}\frac{\log\left(3+|z|\right)}{|z|^{n+2s}}dz\notag\\  
    &\leq C\left(\log\frac{1}{\e}\right)^{-\frac1p}\e^{-2s}.\notag
\end{align}
Finally, for $m=0$, using that $|v_\e|\leq C\left(\log\frac{1}{\e}\right)^{-\frac1p}$ on $B_\frac18^c$, we bound
$$|I_3|\leq C\left(\log\frac{1}{\e}\right)^{-\frac1p}\int_{|y|\geq\frac14}\frac{dy}{|y|^{n+2s}}\leq C\left(\log\frac{1}{\e}\right)^{-\frac1p}.$$
The lemma follows immediately.

\end{proof}

\begin{lem}\label{f-epsilon}
For $|x|\geq 3\e$ we have 
 \begin{align}\notag
  |\D^\sigma f_\varepsilon(x)|\leq C\frac{1}{|x|^{2\sigma}}\left(\log\frac 1\varepsilon\right)^{-\frac{1}{p}}
  \left\{\begin{array}{ll}
       \left(\frac{\e}{|x|}\right)^n &\text{ if }0<\sigma<1\\
       \rule{.0cm}{.7cm}
      \left(\frac{\e}{|x|}\right)^{n-2m} &\text{ if }1<\sigma=m+s<\frac n2,
                                                             \end{array}
\right.
 \end{align}
where $m$ is a positive integer and $0<s<1$. In particular 
 $$\|(-\Delta)^\frac{n}{2p}R_\e\|_{L^p(B_2\setminus B_{3\e})}\leq C\left(\log\frac 1\e\right)^{-\frac 1p}.$$
\end{lem}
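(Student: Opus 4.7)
The key observation is that $f_\e$ is supported in $\overline{B}_{2\e}$ and vanishes at every point $x$ with $|x|\ge 3\e$; for such $x$ and any $y\in \overline{B}_{2\e}$ one has $|x-y|\ge |x|-|y|\ge |x|/3$. For $0<\sigma<1$, the singular-integral representation of $\D^\sigma$ reduces, thanks to $f_\e(x)=0$, to
$$\D^\sigma f_\e(x)=-c_{n,\sigma}\int_{B_{2\e}}\frac{f_\e(y)}{|x-y|^{n+2\sigma}}\,dy,$$
so that $|\D^\sigma f_\e(x)|\le C|x|^{-n-2\sigma}\|f_\e\|_{L^1}$. A change of variables $y=\e z$ yields $f_\e(\e z)=(\log\frac{1}{\e})^{-1/p}F(z)$ with $F(z)=\log|z|\,\varphi(|z|)$ supported in $B_2$ and having an integrable logarithmic singularity, whence $\|f_\e\|_{L^1}\le C\e^n(\log\frac{1}{\e})^{-1/p}$. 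Rewriting $|x|^{-n-2\sigma}=|x|^{-2\sigma}|x|^{-n}$ produces the stated factor $(\e/|x|)^n$.

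For $\sigma=m+s$ with integer $m\ge 1$ and $0<s<1$, I use $\D^\sigma=\D^s\circ \D^m$. Since $\D^m f_\e$ remains supported in $\overline{B}_{2\e}$ and vanishes at $x$, the singular-integral representation of $\D^s$ gives
$$|\D^\sigma f_\e(x)|\le C|x|^{-n-2s}\,\|\D^m f_\e\|_{L^1}.$$
The scaling identity $D^\alpha f_\e(y)=(\log\frac{1}{\e})^{-1/p}\e^{-|\alpha|}(D^\alpha F)(y/\e)$ reduces the $L^1$-norm of any $2m$-th derivative to $\int_{B_2}|D^\alpha F|\,dz$. Near the origin $|D^\alpha F(z)|\le C|z|^{-|\alpha|}$, which is integrable on $B_2$ precisely because $|\alpha|=2m<n$ (by the hypothesis $m<n/2$). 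Hence $\|\D^m f_\e\|_{L^1}\le C\e^{n-2m}(\log\frac{1}{\e})^{-1/p}$, and grouping powers of $\e/|x|$ yields the claimed bound $C|x|^{-2\sigma}(\e/|x|)^{n-2m}(\log\frac{1}{\e})^{-1/p}$.

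For the $L^p$-bound of $\D^{n/(2p)}R_\e$ on $B_2\setminus B_{3\e}$, I split $R_\e=f_\e+g_\e$ and handle $g_\e$ via the pointwise bound \eqref{est-g}, which contributes at most $C(\log\frac{1}{\e})^{-1/p}|B_2|^{1/p}$. For $f_\e$, raising the pointwise estimate to the $p$-th power and passing to polar coordinates reduces matters (using $2\sigma p=n$) to
$$C\left(\log\frac{1}{\e}\right)^{-1}\e^{\kappa p}\int_{3\e}^{2}r^{-\kappa p-1}\,dr,$$
with $\kappa=n$ (when $\sigma<1$) or $\kappa=n-2m$ (otherwise); in both cases $\kappa>0$, so the $r$-integral is bounded by $C\e^{-\kappa p}$, the $\e^{\kappa p}$ prefactor cancels, and what remains is $C(\log\frac{1}{\e})^{-1}$. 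The delicate step in this plan is controlling the integrability of the derivatives of the log-type profile $F$ near the origin, which is exactly what forces the restriction $\sigma=m+s<n/2$ appearing in the hypothesis.
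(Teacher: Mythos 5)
Your proof is correct and follows essentially the same route as the paper's: localize the singular-integral representation of $\D^\sigma$ (resp.\ of $\D^s$ applied to $\D^m f_\e$) to the support $B_{2\e}$, use that $|x-y|\ge |x|/3$ there, and estimate the $L^1$-mass of $f_\e$ (resp.\ $\D^m f_\e$) by rescaling, with integrability near the origin guaranteed by $2m<n$. The only cosmetic difference is in the case $0<\sigma<1$, where you bound the localized integral directly by $|x|^{-n-2\sigma}\|f_\e\|_{L^1}$, while the paper reaches the same estimate through a change of variables followed by a H\"older step to separate the $|\log|y||$ factor; your version is a minor streamlining, and the subsequent polar-coordinate computation of $\|\D^{n/(2p)}f_\e\|_{L^p(B_2\setminus B_{3\e})}$ using $2\sigma p=n$, together with the uniform bound \eqref{est-g} for $g_\e$, matches the paper's conclusion.
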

\begin{proof}
Notice that for every nonzero multiindex $\alpha\in\mathbb{N}^n$ we have
$$|D^\alpha f_\varepsilon(x)|\leq C\left(\log\frac 1\varepsilon\right)^{-\frac{1}{p}}\left\{\begin{array}{ll}
                            \frac{1}{|x|^{|\alpha|}}&\text{ if }|x|<\varepsilon\\
                            \frac{1}{\varepsilon^{|\alpha|}}                  &\text{ if }\varepsilon<|x|\leq 2\varepsilon\\
                            0 &\text{ if }|x|\geq 2\varepsilon.
                           \end{array}\right.
$$
 First we consider $0<\sigma<1$. Using that  $|\varphi_\e|\leq 1$, changing the variable $y\mapsto\e y$ and by H\"older inequality we obtain
 \begin{align}
  |(-\Delta)^\sigma f_\varepsilon(x)|&=C\left|\int_{\R^n}\frac{f_\varepsilon(x)-f_\varepsilon(y)}{|x-y|^{n+2\sigma}}dy\right|\notag\\
  &=C\left(\log\frac 1\varepsilon\right)^{-\frac{1}{p}}\left|\int_{|y|<2\varepsilon}\frac{\left(\log\frac{1}{\varepsilon}-
  \log\frac{1}{|y|}\right)\varphi_\varepsilon(|y|)}{|x-y|^{n+2\sigma}}dy\right|\notag\\
  &\leq C\e^n\left(\log\frac 1\e\right)^{-\frac{1}{p}}\left(\int_{|y|<2}\frac{dy}{|x-\e y|^{np+2p\sigma}}\right)^\frac 1p
 \left(\int_{|y|<2}\left|\log|y|\right|^{p'}dy\right)^\frac {1}{p'}\notag\\
 &\leq C\e^n\left(\log\frac 1\e\right)^{-\frac{1}{p}}\left(\frac{|x|^n}{\e^n}\frac{1}{|x|^{np+2p\sigma}}
 \int_{|y|<\frac{2\e}{|x|}}\frac{dy}{|\frac{x}{|x|}- y|^{np+2p\sigma}}\right)^\frac 1p, \notag\\
&\leq C\frac{1}{|x|^{2\sigma}}\left(\frac{\e}{|x|}\right)^n\left(\log\frac 1\e\right)^{-\frac{1}{p}},\notag
  \end{align}
  where in the second last inequality we have used a change of variable $y\mapsto \frac{|x|}{\e}y$ and the last inequality follows from the uniform bound 
  \begin{align}\label{xy}
   \frac{1}{|\frac{x}{|x|}- y|^{np+2p\sigma}}\leq C \text{ for every } |x|\geq 3\e,\,|y|\leq \frac{2\e}{|x|}.
  \end{align}
 For $\sigma>1$, changing the variable $y\mapsto |x|y$ and by \eqref{xy} we have
  \begin{align}
    |(-\Delta)^\sigma f_\varepsilon(x)|&=C\left|\int_{\R^n}\frac{\D^mf_\varepsilon(x)-\D^mf_\varepsilon(y)}{|x-y|^{n+2s}}dy\right|\notag\\
  &=C\left|\int_{|y|<2\e}\frac{\D^mf_\varepsilon(y)}{|x-y|^{n+2s}}dy\right|\notag\\
  & \leq C \left(\log\frac 1\e\right)^{-\frac{1}{p}}\int_{|y|<2\e}\frac{1}{|y|^{2m}}\frac{1}{|x-y|^{n+2s}}dy\notag\\
& \leq C\frac{1}{|x|^{2\sigma}}\left(\frac{\e}{|x|}\right)^{n-2m}\left(\log\frac 1\e\right)^{-\frac{1}{p}}.\notag
  \end{align}
We conclude the lemma by \eqref{est-g}.
\end{proof}

 \begin{lem}\label{V-epsilon}
  For $0<\sigma<\frac n2$ there exists a constant $C=C(n,\sigma)$ such that 
  $$|\D^\sigma v_\e(x)|\leq C\left(\log\frac 1\e\right)^{-\frac 1p}\frac{1}{|x|^{n+2\sigma}}\quad\text{for every }x\in B_2^c.$$
  Moreover, $$\|\D^\frac{n}{2p} v_\e\|^p_{L^p(B_2^c)}\leq C\left(\log\frac 1\e\right)^{-1}.$$
   \end{lem}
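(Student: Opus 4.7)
My plan rests on the single observation that $v_\e$ is compactly supported in $\overline{B_1}$: since $\varphi_\e(|\cdot|)$ vanishes for $|\cdot|\ge 2\e$ while $\psi_\e(|\cdot|)=\eta(|\cdot|)$ vanishes for $|\cdot|\ge 1$, we have $v_\e(x)=0$ on $B_2^c$. Consequently, when $|x|>2$ and $0<\sigma<1$, the singular integral representation of $\D^\sigma v_\e$ collapses to the plain integral
\[
\D^\sigma v_\e(x)=-c_{n,\sigma}\int_{|y|<1}\frac{v_\e(y)}{|x-y|^{n+2\sigma}}\,dy,
\]
and for $\sigma=m+s$ with $m\in\mathbb N$, $0<s<1$, the identity $\D^\sigma v_\e=\D^s(\D^m v_\e)$ (local since $m$ is an integer) allows the same reduction with $g:=\D^m v_\e$ in place of $v_\e$.

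For $0<\sigma<1$ I would use $|x-y|\ge|x|/2$ on the integration domain to obtain
\[
|\D^\sigma v_\e(x)|\le \frac{C}{|x|^{n+2\sigma}}\|v_\e\|_{L^1(\R^n)},
\]
and then verify $\|v_\e\|_{L^1}\le C(\log\frac{1}{\e})^{-1/p}$ by a direct polar computation: the annulus $\{\e\le|y|\le 1\}$ produces $(\log\frac{1}{\e})^{-1/p}\int_0^1 r^{n-1}\log(1/r)\,dr$, and the ball $B_\e$ contributes only $\e^n(\log\frac{1}{\e})^{1/p'}$, which is negligible for small $\e$.

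For $\sigma=m+s\ge 1$ the crude argument only gives decay $|x|^{-n-2s}$, so the missing $2m$ powers of $|x|$ must be recovered through the moment cancellations $\int g(y)\,y^\alpha\,dy=0$ for $|\alpha|<2m$, which follow by integration by parts from $\D^m y^\alpha=0$ and the compact support of $v_\e$. Taylor-expanding the kernel $K(y):=|x-y|^{-n-2s}$ at $y=0$ up to order $2m-1$: each polynomial term integrates to zero against $g$, while the remainder satisfies $|R(y)|\le C|y|^{2m}|x|^{-n-2s-2m}=C|y|^{2m}|x|^{-n-2\sigma}$ because $|x-y|\sim|x|$. This reduces the proof to the weighted $L^1$ bound
\[
\int_{|y|<1}|\D^m v_\e(y)|\,|y|^{2m}\,dy\le C(\log\tfrac{1}{\e})^{-1/p},
\]
which I would verify on the three subregions $B_{2\e}$, $B_{1/2}\setminus B_{2\e}$, and $B_1\setminus B_{1/2}$: on $B_{2\e}$ I apply the derivative estimate \eqref{ve} together with the hypothesis $n>2m$ (which holds since $\sigma<n/2$); on the middle annulus I invoke the explicit radial identity $\D^m\log(1/|y|)=c_{n,m}|y|^{-2m}$, so that $|g(y)||y|^{2m}$ is a pure constant multiple of $(\log\frac{1}{\e})^{-1/p}$; on the outer annulus $v_\e$ is smooth with uniformly bounded derivatives.

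The $L^p$ assertion then follows immediately, since with $\sigma=\frac{n}{2p}$ one has $p(n+2\sigma)=n(p+1)$ and $\int_{|x|>2}|x|^{-n(p+1)}dx$ is finite. The main obstacle is precisely the case $m\ge 1$, where the naive integral representation produces a decay that is $2m$ powers of $|x|$ short of what we need: overcoming this requires carefully combining the moment-vanishing property of $\D^m v_\e$ with the explicit form of $\D^m\log(1/|y|)$ on the intermediate annulus, which is the most delicate computational piece of the argument.
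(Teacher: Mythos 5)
Your proposal is correct, but for $\sigma=m+s\ge 1$ it takes a genuinely different (and considerably longer) route than the paper. The paper writes $\D^\sigma v_\e=\D^m\bigl(\D^s v_\e\bigr)$ and, for $|x|>2$, differentiates the proper integral
\[
\D^s v_\e(x)=-C\int_{|y|<1}\frac{v_\e(y)}{|x-y|^{n+2s}}\,dy
\]
under the integral sign in $x$: the $m$ Laplacians land on the kernel, each contributing two extra powers of $|x-y|^{-1}\sim|x|^{-1}$, so the bound $|\D^\sigma v_\e(x)|\le C|x|^{-n-2\sigma}\|v_\e\|_{L^1}$ drops out in one line with no further structure on $v_\e$ required. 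You instead factor in the opposite order $\D^\sigma v_\e=\D^s\bigl(\D^m v_\e\bigr)$, and since $\D^m v_\e$ is still supported in $\overline{B_1}$ the naive estimate gives only $|x|^{-n-2s}$; to recover the missing $|x|^{-2m}$ you correctly observe the vanishing moments $\int (\D^m v_\e)\,y^\alpha\,dy=0$ for $|\alpha|<2m$ (integration by parts against $\D^m y^\alpha=0$), Taylor-expand the kernel to order $2m-1$, and reduce matters to the weighted bound $\int_{|y|<1}|\D^m v_\e|\,|y|^{2m}\,dy\le C(\log\frac1\e)^{-1/p}$, which you verify region by region using the explicit form $\D^m\log(1/|y|)=c_{n,m}|y|^{-2m}$ on the middle annulus. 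All of these steps are sound, and the $L^p$ conclusion $p(n+2\cdot\frac{n}{2p})=n(p+1)>n$ is correct; but the moment-cancellation machinery is extra work that the paper avoids entirely by simply putting the integer-order derivatives on the (nonsingular, away from $\overline{B_1}$) kernel rather than on $v_\e$.
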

\begin{proof}
 If $0<\sigma<1$ then
 \begin{align}
  |\D^\sigma v_\e(x)|&= C\int_{|y|<1}\frac{v_\e(y)}{|x-y|^{n+2\sigma}}dy, \qquad |x|>2\label{int}\\
 & \leq C\frac{1}{|x|^{n+2\sigma}}\int_{|y|<1}v_\e(y)dy\notag\\
 & \leq C\left(\log\frac 1\e\right)^{-\frac 1p}\frac{1}{|x|^{n+2\sigma}}\int_{|y|<1}\left(\log|y|+\log2\right)dy\notag\\
 &\leq C\left(\log\frac 1\e\right)^{-\frac 1p}\frac{1}{|x|^{n+2\sigma}}.\notag
 \end{align}
Since the integral in the right hand side of \eqref{int} is a proper integral, differentiating under the integral sign one can prove the 
lemma in a similar way.
\end{proof}

\noindent{\emph{Proof of Theorem \ref{main-thm}}}
Without loss of generality we can assume that $B_1\subseteq\Omega.$ Let $u_\e$ be defined as in Proposition \ref{prop1}.
We claim that there exists a constant $\delta>0$ such that 
\begin{align}\label{lower}
 \limsup_{\e\to 0}\int_{B_\e}\exp\left(\frac{\alpha_{n,p}|u_\e|^{p'}}{\left(\|u_\e\|^p_{L^p(\R^n)}+\|\D^\frac{n}{2p}u_\e\|^p_{L^p(\R^n)}
 \right)^{\frac{p'}{p}}}\right)dx=:\limsup_{\e\to 0}I_\e\geq\delta.
\end{align}
Then Theorem \ref{main-thm} follows at once, since $u_\e\to\infty$  on $B_\e$ as $\e\to0$ and
$$\sup_{u\in \tilde{H}^{\frac np,p}(\Omega),\,\|u\|^p_{L^p(\Omega)}+\|(-\Delta)^\frac{n}{2p}u\|_{L^p(\R^n)}^p\leq 1}
\int_{\Omega}f(|u|)e^{\alpha_{n,p}|u|^{p'}}dx\geq I_\e\inf_{x\in B_\e}f(|u_\e(x)|).$$

To prove \eqref{lower} we choose $\e=e^{-k}$. Noticing that 
\begin{align}
 \lim_{k\to\infty}-k+k\left(1+\frac Ck\right)^{-\frac{p'}{p}} =-C\frac{p'}{p},\notag
\end{align}
$$\|u_\e\|^p_{L^p(\R^n)}\leq C\left(\log\frac1\e\right)^{-1},$$
and using Proposition \ref{prop1} we have
\begin{align}
I_\e
\geq |B_1|\e^ne^{n\log\frac{1}{\e}\left(1+C\left(\log\frac{1}{\e}\right)^{-1}\right)^{-\frac{p'}{p}}}
=|B_1|e^{-kn+kn\left(1+\frac Ck\right)^{-\frac{p'}{p}}}
\geq \delta,\notag
\end{align}
for some $\delta>0$.
\hfill $\square$
\medskip

In order to prove Theorem \ref{thm-2}, first we prove the following proposition which gives a similar type of estimate as in Proposition \ref{prop1}.
\begin{prop}\label{prop2}
 Let $\tau>0$ and $1<p<\infty$. Then there exists a constant $C>0$ such that 
 $$\|(\tau I-\Delta)^{\frac{n}{2p}}u_\e\|_{L^p(\R^n)}\leq \left(1+C\left(\log\frac 1\e\right)^{-1}\right)^\frac1p.$$
\end{prop}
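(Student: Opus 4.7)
The plan is to mimic the proof of Proposition \ref{prop1} with $\D^{\frac{n}{2p}}$ replaced by $(\tau I-\Delta)^{\frac{n}{2p}}$ throughout. Decompose $v_\e = (\log\frac{1}{\e})^{-\frac{1}{p}}\log\frac{1}{|x|} + R_\e$ as before. On the Fourier side, the resolvent identity
\begin{align*}
(\tau+|\xi|^2)^{\frac{n}{2p}}-|\xi|^{\frac{n}{p}} = \frac{n\tau}{2p}\int_0^1(t\tau+|\xi|^2)^{\frac{n}{2p}-1}\,dt
\end{align*}
shows that $(\tau I-\Delta)^{\frac{n}{2p}}$ differs from $\D^{\frac{n}{2p}}$ only by a strictly lower-order correction, whose symbol decays like $|\xi|^{\frac{n}{p}-2}$ at high frequencies and is bounded at low frequencies. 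Consequently the leading contribution of $(\tau I-\Delta)^{\frac{n}{2p}}u_\e$ on the middle annulus $\{3\e<|x|<2\}$ still matches $\D^{\frac{n}{2p}}u_\e \sim c(\log\frac{1}{\e})^{-\frac{1}{p}}|x|^{-\frac{n}{p}}$, and the same computation as in Proposition \ref{prop1} accounts for the $1$ in the desired bound $(1+C(\log\frac{1}{\e})^{-1})^{\frac{1}{p}}$.

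To control the correction $\|[(\tau I-\Delta)^{\frac{n}{2p}}-\D^{\frac{n}{2p}}]u_\e\|_{L^p(\R^n)}$, I would apply Minkowski's integral inequality to the resolvent identity. For each $t\in(0,1]$, the operator $(t\tau I-\Delta)^{\frac{n}{2p}-1}$ is either a genuine Bessel potential (when $\frac{n}{2p}\le 1$, with $L^p$-operator norm bounded by $(t\tau)^{\frac{n}{2p}-1}$, which is integrable in $t\in[0,1]$ since $1-\frac{n}{2p}<1$) or splits into a Bessel part plus an intermediate-order fractional Laplacian $\D^s$ with $0<s<\frac{n}{2p}$. In either case the estimate reduces to controlling $\|u_\e\|_{L^p(\R^n)}=O((\log\frac{1}{\e})^{-\frac{1}{p}})$, which is direct from the definition of $v_\e$ and was already used in the proof of Theorem \ref{main-thm}, together with (if needed) $\|\D^s u_\e\|_{L^p(\R^n)}=O((\log\frac{1}{\e})^{-\frac{1}{p}})$ for any $s<\frac{n}{2p}$. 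The latter follows by repeating Lemmas \ref{V-near-zero}, \ref{f-epsilon} and \ref{V-epsilon} verbatim with the lower exponent $s$: the key gain is that on the middle annulus the dominant term is $(\log\frac{1}{\e})^{-\frac{1}{p}}|x|^{-2s}$ and $\int_{\{3\e<|x|<2\}}|x|^{-2sp}\,dx$ stays bounded uniformly in $\e$ because $2sp<n$, so no logarithmic divergence appears. Combining these via the algebraic inequality $(A+B)^p\le A^p+C_p(A^{p-1}B+B^p)$ applied to $A=\D^{\frac{n}{2p}}u_\e$ and $B=[(\tau I-\Delta)^{\frac{n}{2p}}-\D^{\frac{n}{2p}}]u_\e$ yields the claimed estimate on the $p$-th power of the norm.

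The main obstacle I anticipate is achieving the tight $O((\log\frac{1}{\e})^{-1})$ decay on the $p$-th power of the total norm, rather than the weaker $O((\log\frac{1}{\e})^{-\frac{1}{p}})$ that a naive triangle inequality on $L^p$-norms would produce. This forces one to integrate the pointwise inequality above rather than separate $L^p$-norms: the cross term $\int|\D^{\frac{n}{2p}}u_\e|^{p-1}|B|\,dx$ must be estimated by a weighted Hölder argument against the explicit kernel $|x|^{-\frac{n}{p}}$ on the middle annulus, exploiting the concentration of $\D^{\frac{n}{2p}}u_\e$ there. Absent this extra care, the method of Theorem \ref{main-thm} transposed to the setting of Theorem \ref{thm-2} would yield a vanishing rather than a positive lower bound $\delta$.
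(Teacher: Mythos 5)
Your overall strategy --- isolate $w_\e := (\tau I - \Delta)^{\frac{n}{2p}}u_\e - \D^{\frac{n}{2p}}u_\e$, control its $L^p$ norm via a multiplier bound that reduces to lower-order norms of $u_\e$, and then feed a pointwise algebraic inequality into the $p$-th power of the norm so that the cross terms pick up only a factor $O((\log\frac{1}{\e})^{-1})$ --- is essentially the paper's plan, and you correctly identify that a naive triangle inequality on $L^p$-norms would only give the useless $O((\log\frac{1}{\e})^{-1/p})$. Your resolvent-identity route to the $L^p$ multiplier estimate for $w_\e$ is also a reasonable variant of the paper's Lemma \ref{Lp}. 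However, there is a genuine gap in your choice of algebraic inequality and in where you try to estimate the cross term.

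You propose $(A+B)^p \le A^p + C_p(A^{p-1}B + B^p)$ with $A = |\D^{\frac{n}{2p}}u_\e|$ and $B = |w_\e|$, so the cross term you need is $\int_{\R^n} A^{p-1}B\, dx$. You plan to handle it by a weighted H\"older argument ``on the middle annulus'', but the region that actually breaks is $\{|x|>2\}$, not the middle annulus. The only pointwise control on $w_\e$ available from the symbol of $(\tau I-\Delta)^{\frac{n}{2p}} - \D^{\frac{n}{2p}}$ (and from the paper's Lemma \ref{pointwise}) is $|w_\e(x)| \lesssim (\log\frac{1}{\e})^{-1/p}$ on $\{|x|>2\}$, with no decay, while $|\D^{\frac{n}{2p}}u_\e(x)| \lesssim (\log\frac{1}{\e})^{-1/p}|x|^{-n - n/p}$ there. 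The exterior contribution to your cross term is thus bounded only by
\begin{equation*}
C\left(\log\tfrac{1}{\e}\right)^{-1}\int_{|x|>2}|x|^{-\left(n + \frac{n}{p}\right)(p-1)}\,dx,
\end{equation*}
which converges precisely when $\left(n+\frac{n}{p}\right)(p-1)>n$, i.e.\ when $p^2 - p - 1 > 0$, i.e.\ $p>\frac{1+\sqrt5}{2}$. So for $1<p<\frac{1+\sqrt5}{2}$ your estimate fails at infinity, and the conclusion $\int A^{p-1}B \le C(\log\frac{1}{\e})^{-1}$ is not available. The paper sidesteps this by using the different elementary inequality $(a+b)^p \le a^p + C_p\bigl(b^p + ab^{p-1} + b^{1/2}a^{p-\frac12}\bigr)$, in which the exponent $q$ placed on $w_\e$ is $p$, $p-1$, or $\tfrac12$, never $1$; both $q=p-1$ and $q=\tfrac12$ satisfy $q < \frac{p^2}{p+1}$ for all $p>1$, which is exactly the condition that makes $\int_{|x|>2}|x|^{-(n+\frac{n}{p})(p-q)}\,dx$ finite.

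You could rescue the $q=1$ cross term if you proved genuine decay of $w_\e$ at infinity (which is plausible, since $u_\e$ is compactly supported and the kernel of $(\tau I-\Delta)^{\frac{n}{2p}}$ decays rapidly), but that would require a separate kernel estimate you haven't carried out. As written, your proposal relies on crude $L^\infty$ bounds on $w_\e$ that do not decay on $\{|x|>2\}$, and then the inequality with exponent $q=1$ on $w_\e$ is not strong enough when $p$ is near $1$.
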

\begin{proof}
We set $$w_\e(x)=(\tau I-\Delta)^{\frac{n}{2p}}u_\e(x)-\D^\frac{n}{2p}u_\e(x).$$
We observe that there exists $C=C(p)>0$ such that 
$$h(t)=(1+t)^p-1-C(t^p+t^{p-1}+t^\frac12)<0, \quad\text{for every }t>0,\,1\leq p<\infty,$$
which follows from the fact that $h(0)=0$ and $h'(t)<0$ for every $t>0$. Therefore, there holds
 $$(a+b)^{p}\leq a^{p}+C_p(b^{p}+ab^{p-1}+b^{\frac12}a^{p-\frac12}),\quad a\geq0,\,b\geq0,\,1\leq p<\infty,$$
 for some constant $C_p>0$ and using this inequality  we bound
 \begin{align}
 & \int_{\R^n}|(\tau I-\Delta)^{\frac{n}{2p}}u_\e(x)|^pdx\notag\\
  &=\int_{\R^n}|w_\e(x)+\D^\frac{n}{2p}u_\e(x)|^pdx\notag\\
  &\leq\int_{\R^n}|\D^\frac{n}{2p}u_\e(x)|^p+C\int_{\R^n}|w_\e(x)|^pdx+C\int_{\R^n}|\D^\frac{n}{2p}u_\e(x)||w_\e(x)|^{p-1}dx\notag\\
  &\quad +C\int_{\R^n}|\D^\frac{n}{2p}u_\e(x)|^{p-\frac12}|w_\e(x)|^{\frac12}dx\notag\\
  &=:I_1+I_2+I_3+I_4. \notag
 \end{align}
From Proposition \ref{prop1} we have $$I_1\leq 1+C\left(\log\frac1\e\right)^{-1}.$$ 
To estimate $I_2$, $I_3$ and $I_4$ we will use pointwise estimates on $\D^\sigma u_\e$, $\D^\sigma w_\e$ and $L^p$ estimates on $\D^\sigma w_\e$.
For $0<\sigma<\frac n2$ 
combining Lemmas \ref{V-near-zero} - \ref{V-epsilon},  \ref{log}, and \eqref{est-g} we get
\begin{align}\label{u-eps}
 |\D^\sigma u_\e(x)|\leq C\left(\log\frac 1\e\right)^{-\frac 1p}\left\{\begin{array}{ll}
                              \e^{-2\sigma} &\text{ if }|x|<3\e\\
                               |x|^{-2\sigma} &\text{ if }3\e<|x|<2\\
                               |x|^{-n-2\sigma} &\text{ if }|x|>2.
                                                              \end{array}                                                                                                                    
                                \right.
\end{align}
With the help of  \eqref{u-eps} one can verify that
\begin{align}\label{lps}
 \|\D^\sigma u_\e\|_{L^p(\R^n)}\leq C(n,p,\sigma)\left(\log\frac1\e\right)^{-\frac 1p},\quad 1\leq p<\infty,\, 0\leq\sigma<\frac{n}{2p},
\end{align}
 and together with Lemma \ref{Lp}
 $$I_2\leq C\left(\log\frac1\e\right)^{-1}.$$
We conclude the proposition by showing that
\begin{align}
 \int_{\R^n}|w_\e|^q|\D^\frac{n}{2p}v_\e|^{p-q}dx\leq C(n,p,q)\left(\log\frac 1\e\right)^{-1},\quad 0< q<\frac{p^2}{p+1}.\label{show}
\end{align}
It follows from Lemma \ref{pointwise} that 
 $$|w_\e(x)|\leq C\left(\log\frac 1\e\right)^{-\frac 1p},\quad x\in\R^n,\,\frac{n}{2p}<1,$$ 
 and for $\frac{n}{2p}>1$
$$|w_\e(x)|\leq C\left(\log\frac 1\e\right)^{-\frac 1p}\left\{\begin{array}{ll}
                              \e^{-\frac np+2} &\text{ if }|x|<3\e\\
                               |x|^{-\frac np+2} &\text{ if }3\e<|x|<2\\
                               1 &\text{ if }|x|>2,
                                                              \end{array}                                                                                                                              
                                \right.$$
   thanks to \eqref{u-eps} and \eqref{lps}.  
   
    Splitting $\R^n$ into 
$$A_1=\left\{x:|x|\leq 2\right\}\quad \text{ and }A_3=\left\{x:|x|>2\right\},$$ we have
       $$\int_{\R^n}|w_\e|^q|\D^\frac{n}{2p}v_\e|^{p-q}dx=\sum_{i=1}^2J_i,\quad J_i:=\int_{A_i}|w_\e|^q|\D^\frac{n}{2p}v_\e|^{p-q}dx,\,i=1,2.$$
  Using \eqref{u-eps} one can show that 
   $J_1\leq C\left(\log\frac 1\e\right)^{-1}$ and together with  $q<\frac{p^2}{p+1}$ one has
$ J_3 \leq C\left(\log\frac 1\e\right)^{-1},$ which gives \eqref{show}. 
\end{proof}

\noindent{\emph{Proof of Theorem \ref{thm-2}}} 
Here also we can assume that $B_1\subseteq\Omega$.
We choose $M>0$ large enough such that 
$$\Phi(\alpha_{n,p}t^{p'})\geq \frac 12 e^{\alpha_{n,p}t^{p'}},\quad t\geq M.$$
Then we have 
\begin{align}
 &\int_{\R^n}f(|u_\e|)\Phi\left(\alpha_{n,p}|u_\e|^{p'}\|(\tau I-\Delta)^\frac{n}{2p}u_\e\|_{L^p(\R^n)}^{-p'}\right)dx\notag\\
 &\geq \int_{u_\e\geq M}f(|u_\e|)\Phi\left(\alpha_{n,p}|u_\e|^{p'}\|(\tau I-\Delta)^\frac{n}{2p}u_\e\|_{L^p(\R^n)}^{-p'}\right)dx\notag\\ 
 &\geq\frac 12\int_{B_\e}f(|u_\e|)e^{\alpha_{n,p}|u_\e|^{p'}\|(\tau I-\Delta)^\frac{n}{2p}u_\e\|_{L^p(\R^n)}^{-p'}}dx,\notag
\end{align}
for $\e>0$ small enough. Now the proof follows as in  Theorem \ref{main-thm}, thanks to Proposition \ref{prop2}.
\hfill $\square$


\section{Proof of Theorem \ref{thm-3}}
Throughout this section we use the notation $\|u\|=\|\D^ \frac n4u\|_{L^2(\R^n)}$, $H=\tilde{H}^{\frac n2,2}(\Omega)$ and $\alpha_0=\alpha_{n,2}$.

To prove Theorem \ref{thm-3} we follow the approach in \cite{Adi, Lakkis}. First we prove that $\lambda_1>0$, which makes the statement of Theorem \ref{thm-3}
meaningful.

\begin{lem}
 Let $\Omega$ be an open set in $\R^n$ with finite measure. 
 Then $\lambda_1>0$ and there exists
 a function $u\in H$ such that $$\|u\|_{L^2(\Omega)}=1,\,\text{ and }\| u\|^2=\lambda_1.$$
\end{lem}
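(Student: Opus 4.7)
The plan is to treat $\lambda_1$ as the infimum in the Rayleigh quotient
\[
\lambda_1 \;=\; \inf_{u\in H,\, \|u\|_{L^2(\Omega)}=1} \|u\|^2,
\]
and to exploit the compact embedding $H\hookrightarrow L^2(\Omega)$ (Lemma~\ref{compact} in the Appendix) to establish both positivity and attainment by a standard direct method argument.

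First I would prove $\lambda_1>0$ by contradiction. Suppose there is a sequence $u_k\in H$ with $\|u_k\|_{L^2(\Omega)}=1$ and $\|u_k\|^2\to 0$. Then $(u_k)$ is bounded in the Hilbert space $H$ equipped with the inner product associated to $\|u\|_H^2:=\|u\|_{L^2(\Omega)}^2+\|u\|^2$, so by compactness a subsequence converges strongly in $L^2(\Omega)$ to some $u$ with $\|u\|_{L^2(\Omega)}=1$. On the other hand $(-\Delta)^{n/4}u_k\to 0$ in $L^2(\R^n)$, and because $u_k\to u$ in $L^2(\R^n)$ (extending by zero) while the operator $(-\Delta)^{n/4}$ is closed, the limit satisfies $(-\Delta)^{n/4}u=0$ in $L^2(\R^n)$. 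Passing to Fourier variables gives $|\xi|^{n/2}\hat{u}=0$ a.e., hence $\hat{u}=0$ and $u\equiv 0$, contradicting $\|u\|_{L^2(\Omega)}=1$.

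Next, for attainment, take a minimizing sequence $u_k\in H$ with $\|u_k\|_{L^2(\Omega)}=1$ and $\|u_k\|^2\to\lambda_1$. Then $(u_k)$ is bounded in $H$, so up to a subsequence $u_k\rightharpoonup u$ weakly in $H$ and, by the compact embedding, $u_k\to u$ strongly in $L^2(\Omega)$. In particular $\|u\|_{L^2(\Omega)}=1$ and the condition $u=0$ on $\R^n\setminus\Omega$ passes to the limit (each $u_k$ vanishes there and $L^2$ convergence is preserved on $\R^n\setminus\Omega$ trivially). Weak lower semicontinuity of the seminorm $u\mapsto\|(-\Delta)^{n/4}u\|_{L^2(\R^n)}$ under weak convergence in $H$ then gives
\[
\|u\|^2 \;\le\; \liminf_{k\to\infty}\|u_k\|^2 \;=\; \lambda_1,
\]
and the reverse inequality $\|u\|^2\ge\lambda_1$ holds because $u$ is admissible in the Rayleigh quotient. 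Hence $\|u\|^2=\lambda_1$.

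The only slightly delicate step is identifying the weak limit as a legitimate element of $H$ (in particular, verifying the vanishing condition outside $\Omega$ and the closedness property used to pass $(-\Delta)^{n/4}u_k\to 0$ to the limit); this is not a real obstacle since $H$ is a Hilbert space and the zero extension is preserved under $L^2$ and weak convergence. Everything else reduces to the direct method once the compact embedding in the Appendix is granted.
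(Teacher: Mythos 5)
Your proof is correct and follows essentially the same direct-method route as the paper: minimizing sequence, weak convergence in $H$, strong $L^2(\Omega)$ convergence by the compact embedding, and weak lower semicontinuity of the seminorm. The one place you are more explicit than the paper is the positivity of $\lambda_1$: the paper's proof only addresses attainment and leaves $\lambda_1>0$ implicit (it follows at once since a minimizer $u_0$ with $\|u_0\|_{L^2(\Omega)}=1$ and $\|u_0\|=0$ would force $u_0\equiv 0$, or more directly from the Poincar\'e inequality, Lemma~\ref{poincare} in the Appendix, which gives $\lambda_1\ge C^{-2}$). Your contradiction argument via compactness and the closedness of $(-\Delta)^{n/4}$ is also valid, just a bit longer than invoking Lemma~\ref{poincare} outright.
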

\begin{proof}
We recall that 
\begin{align}
\lambda_1=\inf\left\{\|u\|^2:u\in H,\,\|u\|_{L^2(\Omega)}=1\right\}.\notag
\end{align}
 Let $\{u_k\}_{k=1}^\infty\subset H$ be a sequence such that  
 $$\lim_{k\to\infty}\|u_k\|^2=\lambda_1,\quad \|u_k\|_{L^2(\Omega)}=1\text{ for every }k.$$
 Then up to a subsequence 
 $$u_k\rightharpoonup u_0\text{ in }H,\quad u_k \rightarrow u_0\text{ in }L^2(\Omega),$$ 
 where the latter one follows from the compact embedding $H \hookrightarrow L^2(\Omega)$ (see Lemma \ref{compact}).
  Therefore, 
 $$\lambda_1\leq \|u_0\|^2\leq \liminf_{k\to\infty}\|u_k\|^2=\lambda_1,\quad \|u_0\|_{L^2(\Omega)}=1.$$
\end{proof}

Let us now define the functional 
$$J(u)=\frac{1}{2}\|u\|^2-\int_{\Omega}G(u)dx,\quad u\in H,$$
where $$G(t)=\int_{0}^tg(r)dr,\quad g(r):=\lambda re^{br^2},\quad 0<\lambda<\lambda_1,\,b>0.$$
Then $J$ is $C^2$ and the Fr\'echet derivative of $J$ is be given by 
$$DJ(u)(v)=\int_{\R^n}\D^\frac n4u\D^\frac n4 vdx-\int_{\Omega}g(u)vdx,\quad v\in H.$$
We also define
$$F(u)=DJ(u)(u)=\|u\|^2-\int_{\Omega}g(u)udx,\quad I(u)=J(u)-\frac12 F(u),$$ 
$$S=\left\{u\in H:u\neq0, F(u)=0\right\}.$$
Observe that if $u\in H$ is a nontrivial weak solution of \eqref{eqn} then $u\in S$.

With the above notations we have:
\begin{lem}\label{s}
The set $S$ is closed in the norm topology and 
 $$0<s^2<\frac{\alpha_0}{b},\quad s:=\sqrt{2\inf_{u\in S}{J(u)}}.$$
\end{lem}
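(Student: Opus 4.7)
The plan is to prove the three assertions—closedness of $S$, strict positivity $s>0$, and the strict upper bound $s^2<\alpha_0/b$—in turn. For closedness, given $u_k\in S$ with $u_k\to u$ in $H$, I must verify both $u\ne 0$ and $F(u)=0$. I first establish a uniform quantitative bound $\|w\|\ge\kappa>0$ valid on all of $S$: starting from
$$
\|w\|^2=\lambda\int_\Omega w^2 e^{bw^2}dx=\lambda\|w\|_{L^2(\Omega)}^2+\lambda\int_\Omega w^2(e^{bw^2}-1)dx,
$$
the first term is bounded by $(\lambda/\lambda_1)\|w\|^2$ from the variational characterization of $\lambda_1$, while Hölder combined with Theorem B (giving a uniform bound on $\int(e^{qbw^2}-1)dx$ once $qb\|w\|^2<\alpha_0$) and the Sobolev embedding $H\hookrightarrow L^q$ control the second by $o(\|w\|^2)$ as $\|w\|\to 0$. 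Since $\lambda<\lambda_1$, this rules out $\|w\|\to 0$ on $S$, so $\|u\|\ge\kappa$ and $u\ne 0$. Norm convergence gives $\|u_k\|^2\to\|u\|^2$, and I pass to the limit in $\int g(u_k)u_k$ by Vitali's theorem: uniform $L^p$–integrability of $u_k^2 e^{bu_k^2}$ for a suitable $p>1$ (picked with $bp\sup_k\|u_k\|^2<\alpha_0$) follows from Theorem B, while a.e.\ convergence along a subsequence is supplied by the Rellich-type compact embedding of Lemma \ref{compact}.

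For $s>0$, I use $F(u)=0$ to rewrite
$$
J(u)=\int_\Omega\Bigl[\tfrac12 g(u)u-G(u)\Bigr]dx=\frac{\lambda}{2b}\int_\Omega\bigl[(bu^2-1)e^{bu^2}+1\bigr]dx,
$$
whose integrand is non-negative and dominates $\tfrac12 bu^2 e^{bu^2}$ on $\{bu^2\ge 2\}$; combined with $\|u\|^2=\lambda\int u^2 e^{bu^2}$ this produces the coercive lower bound $J(u)\ge\tfrac14\|u\|^2-C(\lambda,b,|\Omega|)$. Therefore any minimising sequence $u_k\in S$ for $\inf_S J$ is bounded in $H$, so after a subsequence $u_k\rightharpoonup u_0$ in $H$ and $u_k\to u_0$ in every $L^q(\Omega)$, $q<\infty$. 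If $u_0=0$, Vitali as above gives $\int G(u_k)\to 0$, so $\liminf J(u_k)=\tfrac12\liminf\|u_k\|^2\ge\tfrac12\kappa^2>0$; if $u_0\ne 0$, passing to the limit in $F(u_k)=0$ yields $\lim\|u_k\|^2=\lambda\int u_0^2 e^{bu_0^2}$, hence $\lim J(u_k)=\int_\Omega\bigl[\tfrac12 g(u_0)u_0-G(u_0)\bigr]dx>0$ because the integrand is strictly positive wherever $u_0\ne 0$. Either alternative contradicts $\inf_S J=0$.

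For $s^2<\alpha_0/b$, I would exhibit a concrete element of $S$ with $J$-value below $\alpha_0/(2b)$ using the Moser functions $u_\varepsilon$ of Section \ref{section2} at $p=2$. Because $\lambda<\lambda_1$, the map $t\mapsto F(tu_\varepsilon)/t^2=\|u_\varepsilon\|^2-\lambda\int u_\varepsilon^2 e^{bt^2 u_\varepsilon^2}dx$ is positive at $t=0^+$, strictly decreasing (differentiate under the integral), and tends to $-\infty$, giving a unique $t_\varepsilon>0$ with $t_\varepsilon u_\varepsilon\in S$ and $J(t_\varepsilon u_\varepsilon)=\max_{t\ge 0}J(tu_\varepsilon)$. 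Proposition \ref{prop1} yields $\|u_\varepsilon\|^2=1+o(1)$, and the concentration of $u_\varepsilon$ on $B_\varepsilon$ (where $\alpha_0 u_\varepsilon^2\approx n\log(1/\varepsilon)$) forces $bt_\varepsilon^2\to\alpha_0$ as $\varepsilon\to 0$. The delicate point, and the main obstacle of the whole argument, is to bound the excess $\int_\Omega(e^{bt_\varepsilon^2 u_\varepsilon^2}-1)dx$ away from zero uniformly in small $\varepsilon$ at this critical threshold; this is precisely the input supplied by Theorem \ref{main-thm} applied with $p=2$ and $f(t)=t^2$—strictly stronger than what Theorem B alone would provide—and it allows one to conclude
$$
J(t_\varepsilon u_\varepsilon)=\frac{t_\varepsilon^2}{2}\|u_\varepsilon\|^2-\frac{\lambda}{2b}\int_\Omega(e^{bt_\varepsilon^2 u_\varepsilon^2}-1)dx\le\frac{\alpha_0}{2b}-c
$$
for some $c>0$ and all small $\varepsilon$, yielding the desired strict inequality.
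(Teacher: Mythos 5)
Your closedness argument is sound and is essentially the paper's isolation argument in quantitative form: showing $\|w\|\ge\kappa$ on $S$ is the same as showing that $0$ is not a limit point of $S$. The paper reaches the same conclusion via Lemma \ref{conv1}(i) applied to $v_k=u_k/\|u_k\|$, while you use Theorem B through the elementary bound $\int w^2(e^{bw^2}-1)\,dx \le b\int w^4 e^{bw^2}\,dx = O(\|w\|^4)$; both work. However, the subsequent Vitali step to verify $F(u)=0$ is unnecessary and, worse, unjustified: you need $bp\sup_k\|u_k\|^2<\alpha_0$ for some $p>1$, but all you know is $\sup_k\|u_k\|<\infty$. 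The paper avoids this entirely by observing that $F$ is continuous (indeed $C^1$, since $J$ is $C^2$), so once $0$ is isolated in $S$ you are done.

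In the proof that $s>0$, the case $u_0\ne 0$ contains a genuine gap. You write that "passing to the limit in $F(u_k)=0$ yields $\lim\|u_k\|^2=\lambda\int u_0^2 e^{bu_0^2}\,dx$,'' but weak convergence only gives $\|u_0\|\le\liminf\|u_k\|$, and the convergence $\int u_k^2 e^{bu_k^2}\,dx\to\int u_0^2 e^{bu_0^2}\,dx$ again requires a uniform-integrability hypothesis such as $b\,\sup_k\|u_k\|^2<\alpha_0$ that you do not have. The paper instead exploits that the integrand of $I$ is nonnegative: Fatou (together with Lemma \ref{conv1}(ii)) gives
$I(u_0)\le\liminf I(u_k)=\liminf J(u_k)=0$,
hence $u_0=0$ by \eqref{Iu}, after which estimate \eqref{abcd} plus dominated convergence forces $\|u_k\|\to 0$, contradicting closedness. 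You should replace the $u_0\ne 0$ branch with this Fatou argument; your $u_0=0$ branch is then redundant as well, since the contradiction with closedness already appears.

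The serious gap is in the last claim, $s^2<\alpha_0/b$. You set up the classical Adimurthi--Struwe style direct computation: pick the Moser functions $u_\varepsilon$, find $t_\varepsilon$ with $t_\varepsilon u_\varepsilon\in S$, and attempt to show $J(t_\varepsilon u_\varepsilon)<\alpha_0/(2b)$ for small $\varepsilon$. But neither of the two nontrivial steps is carried out: you assert $b t_\varepsilon^2\to\alpha_0$ without proof, and you openly concede that you do not know how to bound the excess $\int(e^{bt_\varepsilon^2u_\varepsilon^2}-1)\,dx$ away from zero. This is exactly the delicate asymptotic analysis the paper takes pains to avoid. The paper's actual argument is a short contraposition that needs no asymptotics at all: for \emph{every} $u\in H$ with $\|u\|=1$, the intermediate value theorem gives some $t_u>0$ with $t_uu\in S$, whence $\tfrac{s^2}{2}\le J(t_uu)\le\tfrac12 t_u^2$, i.e.\ $s\le t_u$; combined with $F(t_uu)=0\Leftrightarrow\int u^2e^{bt_u^2u^2}\,dx=1/\lambda$ and monotonicity in the exponent, this yields $\int u^2 e^{bs^2u^2}\,dx\le 1/\lambda$ uniformly in such $u$, which is estimate \eqref{estimates}. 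Theorem \ref{main-thm} with $p=2$ and $f(t)=t^2$ says that if $bs^2\ge\alpha_0$ this supremum would be infinite; hence $bs^2<\alpha_0$. You correctly identified Theorem \ref{main-thm} as the essential input, but you never set up the sup over the unit ball that turns it into a contradiction, and as written the third part of your proof does not close.
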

\begin{proof}
 Since $F$ is continuous (actually $F$ is $C^1$ as $J$ is $C^2$), it is enough to show that $0$ is an isolated point of $S$. If not, then there exists a sequence $\{u_k\}\subset S$
 such that $\|u_k\|\to0$ as $k\to\infty$. We set $v_k=\frac{u_k}{\|u_k\|}$. From  the compactness of the embedding $H \hookrightarrow L^q(\Omega)$ for any $1\leq q<\infty$,
 we can assume that (up to a subsequence) $v_k\rightharpoonup v$ in $H$ and $v_k\to v$ almost everywhere in $\Omega$. By Lemma \ref{conv1} (below) we get
 $$1=\lambda\int_{\Omega}e^{bu_k^2}v_k^2dx\xrightarrow{k\to\infty}\lambda\int_{\Omega}v^2dx\leq \lambda\frac{1}{\lambda_1}\|v\|^2<1,$$ which is a contradiction.
 Hence $S$ is closed.
 
 Since, $$f(t):=\left(t^2-\frac1b\right)e^{bt^2}+\frac 1b>0,\quad\text{for }t>0,\,b>0,$$ which follows from $f(0)=0$ and $f'(t)>0$ for $t>0$,
 we have 
 \begin{align}
  I(u)=\frac{\lambda}{2}\int_{\Omega}\left(\left(u^2-\frac 1b\right)e^{bu^2}+\frac 1b\right)dx>0,\text{ if }u\in H\setminus\{0\},\label{Iu}
 \end{align}
  and in particular $J(u)=I(u)>0$ for $u\in S$.
  
 If possible, we assume that $s=0$. Then there exists a sequence $\{u_k\}\subset S$ such that $J(u_k)\to 0$ as $k\to\infty.$ Moreover,
 \begin{align}
  \|u_k\|^2=\lambda\int_{\Omega}u_k^2e^{bu_k^2}dx &= \lambda\int_{u_k^2>\frac 2b}u_k^2e^{bu_k^2}dx+\lambda\int_{u_k^2\leq\frac 2b}u_k^2e^{bu_k^2}dx\notag\\
  &\leq4\frac{\lambda}{2}\int_{u_k^2>\frac 2b}\left(\left(u_k^2-\frac 1b\right)e^{bu_k^2}+\frac 1b\right)dx+\lambda\int_{u_k^2\leq\frac 2b}u_k^2e^{bu_k^2}dx\notag\\
 & \leq4J(u_k)+\lambda\int_{u_k^2\leq\frac 2b}u_k^2e^{bu_k^2}dx, \label{abcd}
 \end{align}
and hence $u_k$ is bounded in $H$. Then up to a subsequence $u_k\rightarrow u,\, a.e.$ in $\Omega$ and $u_k\rightharpoonup u$.
 Using Fatou lemma and $ii)$ in Lemma \ref{conv1} we obtain $$I(u)=\frac{\lambda}{2}\int_{\Omega}\left(\left(u^2-\frac 1b\right)e^{bu^2}+\frac 1b\right)dx\leq \liminf_{k\to\infty} 
 I(u_k)=\liminf_{k\to\infty} J(u_k)=0,$$
 and hence $u=0$, thanks to \eqref{Iu}. It follows from \eqref{abcd} that $u_k\to 0$ in $H$ which is a contradiction as $S$ is closed.
 
 We prove now $s^2<\alpha_0b^{-1}$. First we fix $u\in H$ with  $\|u\|=1$. We consider the function 
 $$F_u(t):=F(tu)=\|tu\|^2-\lambda \int_{\Omega}t^2u^2e^{bt^2u^2}dx,\quad t\geq0.$$ Then 
 $$F_u(t)\geq t^2\left(\lambda_1\int_{\Omega}u^2dx-\lambda \int_{\Omega}u^2e^{bt^2u^2}dx\right)>0,$$ for $t>0$ sufficiently small and $\lim_{t\to\infty}F_u(t)=-\infty$.
 Hence, the continuity of $F_u$ implies that there exists $t_u>0$ such that $F_u(t_u)=0$, i.e., $t_uu\in S$. Thus 
 $$\frac{s^2}{2}\leq J(t_uu)\leq \frac12\|t_uu\|^2=\frac12t_u^2.$$ Again using that  $t_uu\in S$ we have
 $$\int_{\Omega}u^2e^{bs^2u^2}dx\leq \frac{1}{\lambda t_u^2}\lambda\int_{\Omega}(t_uu)^2e^{b(t_uu)^2}dx=\frac{1}{\lambda t_u^2}\|t_uu\|^2=\frac1\lambda,$$ which implies that
\begin{equation}\label{estimates}
\sup_{\|u\|\leq 1,\,u\in H}\int_{\Omega}u^2e^{bs^2u^2}dx<\infty,
\end{equation}
and by Theorem \ref{main-thm} we deduce that $s^2<\alpha_0b^{-1}$.
 \end{proof} 
 
 \begin{lem}\label{critical}
 Let $u\in S$ be a minimizer of $J$ on $S$. Then $DJ(u)=0$. 
\end{lem}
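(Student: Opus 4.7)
The plan is to apply the standard Lagrange multiplier argument to the constrained minimization problem $J|_S$, and then exploit the specific form of the nonlinearity $g(r)=\lambda r e^{br^2}$ to rule out a nonzero multiplier.

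First I would note that $J$ and $F$ are both $C^1$ (indeed $C^2$ as mentioned), and compute the Fr\'echet derivatives explicitly:
\begin{align*}
DJ(u)(v) &= \langle u, v\rangle_H - \int_\Omega g(u)v\,dx, \\
DF(u)(v) &= 2\langle u,v\rangle_H - \int_\Omega \bigl(g'(u)u + g(u)\bigr)v\,dx,
\end{align*}
where $\langle u,v\rangle_H := \int_{\R^n}\D^{n/4}u\,\D^{n/4}v\,dx$. To invoke the Lagrange multiplier principle, I need $DF(u)\neq 0$, which I would establish by showing the stronger quantitative fact that $DF(u)(u)<0$; see below.

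Next I would apply the Lagrange multiplier theorem: since $u$ minimizes $J$ on the $C^1$-manifold $\{F=0\}$ near $u$, there exists $\mu\in\R$ with $DJ(u)=\mu\,DF(u)$. Testing this identity against $v=u$ gives $F(u)=\mu\,DF(u)(u)$, and since $u\in S$ forces $F(u)=0$, it suffices to show $DF(u)(u)\neq 0$ to conclude $\mu=0$.

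The main (and only nontrivial) step is computing $DF(u)(u)$ using the specific form of $g$. Since $g(r)=\lambda r e^{br^2}$, one gets $g'(u)u^2+g(u)u = 2\lambda u^2(1+bu^2)e^{bu^2}$, so
\[
DF(u)(u) = 2\|u\|^2 - 2\lambda\int_\Omega u^2(1+bu^2)e^{bu^2}\,dx.
\]
Using the constraint $\|u\|^2=\lambda\int_\Omega u^2 e^{bu^2}\,dx$ from $F(u)=0$, the expression collapses to
\[
DF(u)(u) = -2b\lambda\int_\Omega u^4 e^{bu^2}\,dx,
\]
which is strictly negative since $u\neq 0$. This both justifies the applicability of the Lagrange multiplier theorem and forces $\mu=0$, yielding $DJ(u)=0$. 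The only subtlety worth flagging is ensuring that the integral $\int_\Omega u^2(1+bu^2)e^{bu^2}dx$ is finite for $u\in H$, but this is guaranteed by the Moser-Trudinger inequality of Theorem B (with $p=2$) combined with the fact that $u\in S$ already integrates $u^2 e^{bu^2}$ thanks to $F(u)=0$.
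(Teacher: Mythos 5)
Your proof is correct and essentially the same as the paper's: the key nondegeneracy in both is $DF(u)(u)=-2b\lambda\int_\Omega u^4e^{bu^2}\,dx<0$, which the paper computes as $\frac{\partial F_{u,v}}{\partial\gamma}(1,0)$. The only stylistic difference is that you invoke the abstract Lagrange multiplier theorem, whereas the paper inlines its proof via the implicit function theorem, constructing the curve $t\mapsto\gamma(t)u+tv$ in $S$ and evaluating $DJ(u)(v)$ directly from difference quotients.
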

\begin{proof}
 We fix a function $v\in H\setminus\{0\}$ and consider the function $$F_{u,v}(\gamma,t):=F(\gamma u+tv),\quad \gamma>0,\,t\in \R.$$
 Differentiating $F_{u,v}$ with respect to $\gamma$  and using that $F(u)=0$, we get
 $$\frac{\partial F_{u,v}}{\partial \gamma}(1,0)=-2b\lambda\int_{\R^n}u^4e^{bu^2}dx<0.$$
 Hence, by implicit function theorem, there exists $\delta>0$ such that we can write $\gamma=\gamma(t)$ as a $C^1$ function of $t$ on the 
 interval $(-\delta,\,\delta)$ which satisfies $$\gamma(0)=1,\quad  F_{u,v}(\gamma(t),t)=0,\text{ for every }t\in(-\delta,\,\delta).$$
 Moreover, choosing $\delta>0$ smaller if necessary, we have $\gamma(t)u+tv\in S$ for every $t\in(-\delta,\,\delta)$. We write
 \begin{align}
  DJ(u)(v)&=\lim_{t\to0}\frac{J(u+tv)-J(u)}{t}\notag\\
 & =\lim_{t\to0}\left(\frac{J(\gamma(t)u+tv)-J(u)}{t}-\frac{J(\gamma(t)u+tv)-J(u+tv)}{t}\right).\notag
 \end{align}
 Since $J$ is $C^1$, a first order expansion of $J$ yields 
\begin{align}
 J(\gamma(t)u+tv)-J(u+tv)&=J((u+tv)+(\gamma(t)-1)u)-J(u+tv)\notag\\
 &=DJ(u+tv)((\gamma(t)-1)u)+o\left((\gamma(t)-1)\|u\|\right)\notag\\
 &=(\gamma(t)-1)DJ(u+tv)(u)+(\gamma(t)-1)\|u\|o(1).\notag
\end{align}
Therefore, using that $F(u)=0$,
$$\lim_{t\to0}\frac{J(\gamma(t)u+tv)-J(u+tv)}{t}=\gamma'(0)DJ(u)(u)=0.$$
On the other hand, since $u$ is a minimizer of $J$ on $S$ and $\gamma(t)u+tv\in S$,
$$\frac{J(\gamma(t)u+tv)-J(u)}{t}=\left\{\begin{array}{ll}
                                          \geq 0 &\text{ if }t\geq0\\
                                          \leq 0 &\text{ if }t\leq 0,
                                         \end{array}\right.
$$
implies that (since it exists) $$\lim_{t\to0}\frac{J(\gamma(t)u+tv)-J(u)}{t}=0.$$ This shows that $DJ(u)(v)=0$ for every $v\in H$, i.e., $DJ(u)=0$.
\end{proof}

\medskip

\noindent
{\emph{Proof of Theorem \ref{thm-3}}}
Let $\{u_k\}$ be a sequence in $S$ such that $\lim_{k\to\infty}J(u_k)\to \frac{s^2}{2}$. Then by \eqref{abcd} $u_k$ is a bounded sequence in $H$ and consequently,
up to a subsequence $$u_k\rightharpoonup u,\quad u_k\rightarrow u,\,a.e. \text{ in }\Omega,\quad \ell:=\lim_{k\to\infty}\|u_k\|,$$ for some $u\in H$.
First we claim that $u\neq0$. 

Assuming $u=0$, by $ii)$ in Lemma \ref{conv1} (below) we get 
$$\lim_{k\to\infty}\|u_k\|^2=\lim_{k\to\infty}2\left(J(u_k)+\frac{\lambda}{2b}\int_{\Omega}(e^{bu_k^2}-1)dx\right)=s^2<\frac{\alpha_0}{b},$$
and hence by $i)$ in Lemma \ref{conv1}
$$\lim_{k\to\infty}\|u_k\|^2=\lim_{k\to\infty}\lambda\int_{\Omega}u_k^2e^{bu_k^2}dx=0,$$ a contradiction as $S$ is closed. 

We claim that  $\ell=\|u\|$. Then $u_k\rightarrow u$ in $H$ and applying Lemmas \ref{s} and \ref{critical} we have Theorem \ref{thm-3}.

If the claim is false then necessarily we shall have $\ell>\|u\|$. 

One has
\begin{align}
\lim_{k\to\infty}\|u_k\|^2&=\lim_{k\to\infty}2\left(J(u_k)+\frac{\lambda}{2b}\int_{\Omega}(e^{bu_k^2}-1)dx\right)\notag\\
&=2\left(\frac{s^2}{2}+\frac{\lambda}{2b}\int_{\Omega}(e^{bu^2}-1)dx,\right)\notag\\
&=s^2-2J(u)+\|u\|^2.\notag
\end{align}
We divide the proof in two cases, namely $J(u)\leq 0$ and $J(u)>0$.

\noindent\textbf{Case 1.} We consider that $J(u)\leq 0$. Since $u\neq 0$,
$$\|u\|^2\leq \frac{\lambda}{b}\int_{\Omega}(e^{bu^2}-1)dx<\lambda\int_{\Omega}u^2e^{bu^2}dx,$$ where the second inequality follows
from \eqref{Iu}. It is easy to see that we can choose $0<t_0<1$ such that
$$\|t_0u\|^2=\lambda\int_{\Omega}(t_0u)^2e^{b(t_0u)^2}dx,$$ that means $t_0u\in S$. Using that  $I(tu)$ is strictly monotone increasing in $t$, which follows
from the expression in \eqref{Iu}, we obtain
\begin{align}
 \frac{s^2}{2}\leq J(t_0u)=I(t_0u)<I(u)\leq \liminf_{k\to\infty}J(u_k)=\frac{s^2}{2},\notag
\end{align}
a contradiction.

\noindent\textbf{Case 2.} Here we assume that $J(u)> 0$. Then 
\begin{align}
 \ell^2=\lim_{k\to\infty}\|u_k\|^2=s^2-2J(u)+\|u\|^2<s^2+\|u\|^2<\frac{\alpha_0}{b}+\|u\|^2.\label{ell}
\end{align}
 Taking $v_k=\frac{u_k}{\|u_k\|}$ we see that (up to a subsequence)
$$v_k\rightharpoonup v:=\frac {u}{{\ell}},\quad v_k\rightarrow v,\,a.e.\text{ in }\Omega,$$ and by Lemma \ref{lions}, for every $p<(1-\|v\|^2)^{-1}$
$$\sup_{k\in\mathbb{N}}\int_{\Omega}e^{p\alpha_0v_k^2}dx<\infty.$$
Taking \eqref{ell} into account we have 
$$0<\ell^2-\|u\|^2=s^2-2J(u)<\frac{\alpha_0}{b},$$ and therefore, we can choose $\e_0>0$ such that
$$1+\e_0=\frac{\alpha_0}{b}\frac{1}{\ell^2-\|u\|^2},\quad\text{i.e., }\ell^2(1+\e_0)=\frac{\alpha_0}{b}\left(1-\frac{\|u\|^2}{\ell^2}\right)^{-1}.$$
For $k$ large enough such that $\|u_k\|^2\leq \ell^2(1+\frac{\e_0}{2})$ holds, we observe that 
$b\|u_k\|^2\leq p_0\alpha_0$ for some $1<p_0<(1-\|v\|^2)^{-1}$. Thus, for some $p_1>1$, $p_2>1$ with $p_1p_2p_0<(1-\|v\|^2)^{-1}$ we obtain
$$\sup_{k\in\mathbb{N}}\int_{\Omega}\left(u_k^2e^{bu_k^2}\right)^{p_1}dx\leq \sup_{k\in\mathbb{N}}\|u_k^{2p_1}\|_{L^{p_2'}(\Omega)}\|e^{p_1p_0\alpha_0v_k^2}\|_{L^{p_2}(\Omega)}<\infty, $$
and together with Lemma \ref{epsilon-conv}
$$\lim_{k\to\infty}\int_{\Omega}u_k^2e^{bu_k^2}dx=\int_{\Omega}u^2e^{bu^2}dx.$$
Indeed,
$$\|u\|^2<\ell^2=\lim_{k\to\infty}\|u_k\|^2=\lambda\lim_{k\to\infty}\int_{\Omega}u_k^2e^{bu_k^2}dx=\lambda\int_{\Omega}u^2e^{bu^2}dx,$$ 
and  we can now proceed as in Case 1.
\hfill $\square$

\begin{lem} \label{conv1}
 Let $u_k,\,v_k\in H$ such that $u_k\rightharpoonup u \text{ in }H$, $ u_k\rightarrow u, \text{ a.e. in }\Omega,$  
 $v_k\rightharpoonup v\text{ in }H$ and $ v_k\rightarrow v, \text{ a.e. in }\Omega.$ Then 
 \begin{itemize}
  \item [i)] If $$\limsup_{k\to\infty}\|u_k\|^2<\frac{\alpha_0}{b},$$ then for every integer $\ell\geq 1$
  $$\lim_{k\to\infty}\int_{\Omega}e^{bu_k^2}v_k^\ell dx=\int_{\Omega}e^{bu^2}v^\ell dx.$$
  \item [ii)] If $$\limsup_{k\to\infty}\int_{\Omega}u_k^2e^{bu_k^2}dx<\infty,$$ then
  $$\lim_{k\to\infty}\int_{\Omega} e^{bu_k^2}dx=\int_{\Omega} e^{bu^2}dx.$$
 \end{itemize}
\end{lem}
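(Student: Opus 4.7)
The plan is to reduce both parts to a dominated/Vitali type convergence argument once we show that $e^{bu_k^2}$ enjoys a uniform integrability property. The key inputs are: the Moser-Trudinger inequality of Theorem B (with $p=2$, so $p'=2$, $\alpha_{n,2}=\alpha_0$) applied to the normalized functions $w_k:=u_k/\|u_k\|$, and the compact embedding $H\hookrightarrow L^r(\Omega)$ for every $r\in[1,\infty)$ (which follows from Lemma \ref{compact} together with the Moser-Trudinger bound by standard interpolation).

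For part $i)$, the hypothesis $\limsup_k b\|u_k\|^2<\alpha_0$ lets us pick $\delta>0$ and $q>1$ with $qb\|u_k\|^2\le(1-\delta)\alpha_0<\alpha_0$ for all large $k$. Writing $e^{qbu_k^2}=e^{qb\|u_k\|^2 w_k^2}\le e^{\alpha_0 w_k^2}$ and applying Theorem B to $w_k$, we obtain $\|e^{bu_k^2}\|_{L^q(\Omega)}\le C$ uniformly in $k$. By Fatou, $e^{bu^2}\in L^q(\Omega)$ as well. On the other hand, by the compact embedding $H\hookrightarrow L^{\ell q'}(\Omega)$, $v_k^\ell\to v^\ell$ strongly in $L^{q'}(\Omega)$. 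Combining these with the a.e. convergence $e^{bu_k^2}v_k^\ell\to e^{bu^2}v^\ell$, a standard Vitali argument (H\"older on a small set $E$ gives $\int_E|e^{bu_k^2}v_k^\ell|\le\|e^{bu_k^2}\|_{L^q}\|v_k^\ell\|_{L^{q'}(E)}$, which is uniformly small) yields the claimed convergence.

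For part $ii)$, we no longer have a bound on $\|u_k\|$, so the Moser-Trudinger trick is unavailable. Instead I would use the hypothesis $\sup_k\int_\Omega u_k^2 e^{bu_k^2}dx\le C$ as a uniform tail bound via Chebyshev: for any $M>0$,
\begin{equation*}
\int_{\{|u_k|>M\}}e^{bu_k^2}dx\le\frac{1}{M^2}\int_\Omega u_k^2 e^{bu_k^2}dx\le\frac{C}{M^2},
\end{equation*}
and by Fatou the same estimate holds for the limit $u$. Pick $M$ outside the (at most countable) set of values on which $|\{|u|=M\}|>0$. On the complementary set $\{|u_k|\le M\}$ the integrand $e^{bu_k^2}\mathbf{1}_{\{|u_k|\le M\}}$ is dominated by $e^{bM^2}$ and converges a.e.\ to $e^{bu^2}\mathbf{1}_{\{|u|<M\}}$ (the only issue is on $\{|u|=M\}$, which has measure zero by the choice of $M$). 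Dominated convergence then yields $\int\mathbf{1}_{\{|u_k|\le M\}}e^{bu_k^2}dx\to\int_{\{|u|<M\}}e^{bu^2}dx$, so
\begin{equation*}
\limsup_{k\to\infty}\left|\int_\Omega e^{bu_k^2}dx-\int_\Omega e^{bu^2}dx\right|\le\frac{2C}{M^2},
\end{equation*}
and letting $M\to\infty$ concludes the proof.

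The main obstacle I expect is the care needed in part $ii)$: without the sub-criticality of the norm one cannot invoke Moser-Trudinger directly, and the truncation step must be done with a good choice of threshold $M$ (avoiding level sets of positive measure for $|u|$) in order to apply dominated convergence cleanly. Everything else is routine once the correct Lebesgue exponents are lined up with the compact embeddings of $H$.
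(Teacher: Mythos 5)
Your proof is correct and follows essentially the same strategy as the paper: for part $i)$ you obtain uniform $L^q$-integrability of $e^{bu_k^2}$ from the sub-critical Moser--Trudinger bound applied to the normalizations $w_k=u_k/\|u_k\|$ (the paper invokes Theorem C rather than Theorem B, but both serve), combine it with the compact embedding $H\hookrightarrow L^{\ell q'}(\Omega)$, and pass to the limit via a Vitali-type argument; for part $ii)$ you use the same Chebyshev tail estimate $\int_{\{|u_k|>M\}}e^{bu_k^2}\,dx\le C/M^2$ that the paper uses to verify uniform absolute continuity. The only cosmetic difference is that you unwind Vitali's theorem into an explicit truncation-plus-dominated-convergence argument, whereas the paper cites it directly as Lemma~\ref{epsilon-conv}.
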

\begin{proof}
We prove the lemma with the help of Lemma \ref{epsilon-conv} (in Appendix).

 We choose $p>1$ such that for $k$ large enough  $p\|u_k\|^2<\frac{\alpha_0}{b}$ holds and together with Theorem C we have
 $$\sup_{k\in\mathbb{N}}\int_{\Omega}e^{pbu_k^2}dx<\infty.$$ Since the embedding $\tilde{H}^{\frac n2,2}(\Omega) \hookrightarrow L^q(\Omega)$ is compact (see Lemma \ref{compact}) for every
 $1\leq q<\infty$, we have $$v_k^q\rightarrow v^q\text{ in }L^1(\Omega).$$ Indeed,
 $$\sup_{k\in\mathbb{N}}\|e^{bu_k^2}v_k^\ell\|_{L^p(\Omega)}\leq \|v_k^\ell\|_{L^{p'}(\Omega)}\|e^{bu_k^2}\|_{L^p(\Omega)}<\infty,$$
 and we conclude $i)$.
 
  Now $ii)$ follows from $$\int_{u_k^2>M}e^{bu_k^2}dx\leq \frac 1M\int_{u_k^2>M}u_k^2e^{bu_k^2}dx\leq \frac CM,$$ 
  which implies that the function $f_k:=e^{bu_k^2}$ satisfies the condition  $ii)$ in Lemma \ref{epsilon-conv}.
\end{proof}

In the following lemma we prove that  the assumption $H'(v)$ in \cite{IS} is true under certain conditions. 
 \begin{lem}\label{Ian-Squa}
 Let $\alpha_0>0$. Let $f(t)=e^{\alpha_0t^2}h(t)$ satisfies $H(i)-(iii)$ in \cite{IS}. Let $h\geq0$ on $[0,\infty)$ and $h(-t)=-h(t)$. Let 
 $s\frac{f(st)}{t}$ be a monotone increasing function with respect to $t$ on $(0,\infty)$, $s\neq0$. If
  $\lim_{t\to\infty}h(t)t=\infty$ then there exists $u\in \tilde{H}^{\frac12,2}((0,1))$ such that $\sqrt{2\pi}\|\D^\frac14u\|_{L^2(\R)}=1$ and
   $$\sup_{t>0}\varPhi(tu):=\sup_{t>0}\left(\frac{t^2}{4\pi}-\int_0^1F(tu)dx\right)<\frac{\omega}{2\alpha_0},$$
   where $$F(t)=\int_0^tf(s)ds,$$ and $\omega$ is as in \cite{IS}.
  \end{lem}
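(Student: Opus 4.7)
The plan is to take as $u$ a suitably rescaled version of the one-dimensional Moser function $u_\e$ constructed in Section \ref{section2} (with $n=1$, $p=2$, so $\alpha_{1,2}=\pi$ and $p'=2$), and then analyze $\sup_{t>0}\varPhi(tu_\e)$ carefully. After normalizing so that $\sqrt{2\pi}\,\|\D^{1/4}u_\e\|_{L^2(\R)}=1$ (which by Proposition \ref{prop1} only costs a factor of the form $(1+O((\log\frac1\e)^{-1}))^{-1/2}$), the function $u_\e$ takes a constant value of order $c\sqrt{\log(1/\e)}\,(1+o(1))$ on $[0,\e]$, where $c>0$ is the explicit constant coming from the normalization formula in Proposition \ref{prop1}; moreover $u_\e$ is supported in $(-1,1)$ and is non-negative.

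First I would check that $\varPhi(tu_\e)$ attains its supremum in $t$ at some $t_\e>0$. This follows because $F(s)\ge C_1 e^{\alpha_0 s^2}-C_2$ for $s$ large (a consequence of $H(i)$--$(iii)$ together with $h\ge 0$), so $\varPhi(tu_\e)\to-\infty$ as $t\to\infty$, while $\varPhi(0)=0$ and $\varPhi$ is continuous. The monotonicity hypothesis on $s\mapsto s f(st)/t$ further implies that the stationary point $t_\e$ is unique. At $t_\e$, the first-order condition reads
\begin{equation*}
\frac{t_\e}{2\pi}=\int_{0}^{1}f(t_\e u_\e)\,u_\e\,dx.
\end{equation*}

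Second, I would establish that $\{t_\e\}$ is bounded and, crucially, that $\limsup_{\e\to 0}\alpha_0 t_\e^2 c^2 < 1$. The boundedness is easy: using $f(s)\ge C e^{\alpha_0 s^2}$ and restricting the integral to $[0,\e]$, the right-hand side of the first-order condition grows at least like $\e\cdot c\sqrt{\log(1/\e)}\cdot e^{\alpha_0 c^2 t_\e^2\log(1/\e)}$, which forces $\alpha_0 c^2 t_\e^2\le 1$ asymptotically. The \emph{strict} inequality is the content of the hypothesis $\lim_{t\to\infty}h(t)t=\infty$: if $\alpha_0 c^2 t_\e^2\to 1$, then the factor $\e\cdot e^{\alpha_0 c^2 t_\e^2 \log(1/\e)}$ stays of order $1$, while $h(t_\e c\sqrt{\log(1/\e)})\cdot t_\e c\sqrt{\log(1/\e)}\to\infty$; but then the right-hand side of the stationary condition would diverge, contradicting its boundedness. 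Hence there is $\delta>0$ with $\alpha_0 c^2 t_\e^2\le 1-\delta$ for all $\e$ small enough.

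Third, translate this strict bound into $\sup_{t>0}\varPhi(tu_\e) < \omega/(2\alpha_0)$. Since $F\ge 0$ (because $h\ge 0$ on $[0,\infty)$) one has $\varPhi(t_\e u_\e)\le t_\e^2/(4\pi)$, and the calibration of the Moser-type constant $c$ in Proposition \ref{prop1} (together with the identification of $\omega$ from \cite{IS} with the sharp Moser-Trudinger constant in dimension one) means that the critical threshold $\omega/(2\alpha_0)$ corresponds exactly to $\alpha_0 c^2 t^2 = 1$. Thus the strict inequality $\alpha_0 c^2 t_\e^2 \le 1-\delta$ yields, for $\e$ small, the desired strict inequality $\sup_{t>0}\varPhi(tu_\e)<\omega/(2\alpha_0)$, and $u=u_\e$ is the required test function.

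The main obstacle I foresee is the careful bookkeeping of constants: matching the explicit value of $c$ in the normalization of $u_\e$ (Proposition \ref{prop1}) with the precise definition of $\omega$ in \cite{IS}, and controlling the lower-order error term $(1+O((\log\frac1\e)^{-1}))$ in the $\D^{1/4}$-norm so that it does not spoil the strict inequality. A separate technical point is to verify that the contribution from $[\e,1]$ to $\int_0^1 F(t_\e u_\e)\,dx$ is at most $O(1)$ (using the logarithmic decay of $u_\e$ on this annular region and Theorem B with $p=2$), so that the estimate really is driven by the concentration region $[0,\e]$.
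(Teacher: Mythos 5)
Your overall strategy (Moser-type concentrating test functions, the factorization $\varPhi'(tu)=t\bigl(\frac{1}{2\pi}-\int_0^1\frac{f(tu)}{t}u\,dx\bigr)$, and the constant matching $t_0^2/(4\pi)=\pi/(2\alpha_0)$) is the right one, but your route to the strict inequality is more elaborate than necessary and contains two genuine gaps. The paper argues quite differently: by Theorem \ref{main-thm} applied to $g(w):=h\bigl(\sqrt{\pi/\alpha_0}\,w\bigr)w$ (which $\to\infty$ precisely because $h(s)s\to\infty$), for any $M$ it picks a \emph{single} $u$ with $\sqrt{2\pi}\|\D^{1/4}u\|_{L^2(\R)}=1$ and $\int_0^1 f(t_0u)u\,dx>M$ with $t_0=\sqrt{2\pi^2/\alpha_0}$; the monotonicity hypothesis then gives $\int_0^1\frac{f(tu)}{t}u\,dx\geq\frac{1}{t_0}\int_0^1 f(t_0u)u\,dx>M/t_0>\frac{1}{2\pi}$ for all $t\geq t_0$, whence $\varPhi'(tu)<0$ on $[t_0,\infty)$, and by continuity on $(t_0-\epsilon,\infty)$. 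Since $F\geq 0$, this localizes the supremum to $(0,t_0-\epsilon)$ and yields $\sup_{t>0}\varPhi(tu)\leq(t_0-\epsilon)^2/(4\pi)<\pi/(2\alpha_0)$. No asymptotic analysis of a critical point $t_\e$ is needed.

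The two gaps in your argument are the following. First, you use $f(s)\geq Ce^{\alpha_0 s^2}$, but this would require $h$ bounded below, which does not follow from $h\geq 0$ and $h(s)s\to\infty$ (e.g.\ $h(s)=s^{-1/2}$). Second, and more importantly, your central step asserting that ``if $\alpha_0 c^2 t_\e^2\to 1$, the factor $\e\cdot e^{\alpha_0 c^2 t_\e^2\log(1/\e)}$ stays of order $1$'' is false: writing $\mu_\e=1-\alpha_0 c^2 t_\e^2\to 0^+$, that factor equals $\e^{\mu_\e}=e^{-\mu_\e\log(1/\e)}$, which tends to $0$ whenever $\mu_\e\log(1/\e)\to\infty$ (e.g.\ $\mu_\e=(\log\frac1\e)^{-1/2}$), so the divergence of $h(\cdot)\cdot(\cdot)$ need not dominate and you cannot conclude $\limsup\alpha_0c^2t_\e^2<1$. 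Note that even the weaker (and sufficient) statement $t_\e<t_0$ for some small $\e$ is most cleanly obtained exactly as in the paper: the monotonicity hypothesis reduces the stationarity condition at any $t\geq t_0$ to a lower bound on $\int_0^1 f(t_0u_\e)u_\e\,dx$, which diverges as $\e\to 0$ by Theorem \ref{main-thm}. You should invoke that theorem directly rather than reproving a fragment of it via bespoke Moser-function asymptotics.
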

\begin{proof}
For a given $M>0$ we can choose $u\in \tilde{H}^{\frac12,2}((0,1))$ such that 
$$\int_0^1f\left(\sqrt{\frac{2\pi^2}{\alpha_0}}u\right)udx>M,\quad \sqrt{2\pi}\|\D^\frac14u\|_{L^2(\R)}=1,$$
thanks to Theorem \ref{main-thm}. Differentiating with respect to $t$ one has
$$\varPhi'(tu)=t\left(\frac{1}{2\pi}-\int_0^1\frac{f(tu)}{t}udx\right).$$ Hence, for $t\geq \sqrt{\frac{2\pi^2}{\alpha_0}}=:t_0$ and $2\pi M>t_0$
\begin{align}
 \varPhi'(tu)\leq t\left(\frac{1}{2\pi}-\int_0^1\frac{f(t_0u)}{t_0}udx\right)<0. \notag
\end{align}
Thus $\varPhi'(tu)\leq0$ on $(t_0-\e,\infty)$ for some $\e>0$ and therefore,
$$\sup_{t>0}\varPhi(tu)=\sup_{t\in(0,\,t_0-\e)}\varPhi(tu)\leq\sup_{t\in(0,\,t_0-\e)}\frac{t^2}{4\pi}<\frac{\pi}{2\alpha_0}.$$ 
Since $\omega=\pi$, thanks to Theorem  B, we conclude the lemma.
\end{proof}

\appendix
\section{Appendix}
 
 \begin{lem}[Pointwise estimate]\label{pointwise}
Let $s>0$ and not an integer.  Let $m$  be the smallest integer greater than $s$.  Then for any  $\tau>0$
  $$|(\tau I-\Delta)^{s}u(x)-\D^{s}u(x)|\leq C\sum_{j=1}^{m-1}|\D^{s-j}u(x)|+C\|\D^{\sigma}u\|_{{L^{1}(\R^{n})}},\quad u\in\s(\R^{n}),$$
  where $\sigma\in\left(\max\{\frac n2-m+s,0\}, \,\frac n2\right)$, the constant $C$ depends only on $n$, $s$, $\sigma$, $\tau$ and for $m=1$ the 
  above sum can be interpreted as zero.
\end{lem}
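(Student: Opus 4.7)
My plan is to reduce the lemma to a bounded convolution kernel acting on $\D^\sigma u$, by analyzing the Fourier multiplier of the operator obtained after subtracting the first few terms of the binomial expansion of $(\tau+|\xi|^2)^s$. Set
\[
 T_s u := (\tau I-\Delta)^s u - \D^s u - \sum_{j=1}^{m-1}\binom{s}{j}\tau^j \D^{s-j}u,
\]
so that, by the triangle inequality applied to the truncated sum, the lemma reduces to the pointwise estimate $|T_s u(x)| \le C \|\D^\sigma u\|_{L^1(\R^n)}$. The operator $T_s$ is a Fourier multiplier with symbol
\[
 M(\xi) := (\tau+|\xi|^2)^s - |\xi|^{2s} - \sum_{j=1}^{m-1}\binom{s}{j}\tau^j |\xi|^{2(s-j)}.
\]

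The first step is a two-regime bound $|M(\xi)| \le C\min\{1,\,|\xi|^{2(s-m)}\}$. On $\{|\xi|^2\ge 2\tau\}$ the binomial series
\[
 (\tau+|\xi|^2)^s = |\xi|^{2s}\sum_{j\ge 0}\binom{s}{j}\frac{\tau^j}{|\xi|^{2j}}
\]
converges absolutely, the first $m$ terms cancel against the definition of $M$, and what remains is a geometric tail of order $|\xi|^{2(s-m)}$. On the complementary set $\{|\xi|^2\le 2\tau\}$ the definition of $m$ gives $m-1\le s$, so every exponent $2(s-j)$ with $1\le j\le m-1$ is nonnegative; hence each summand in $M$ is uniformly bounded on that compact region and $|M(\xi)|\le C$ there.

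Next I would factor
\[
 \widehat{T_s u}(\xi) = \bigl(M(\xi)|\xi|^{-2\sigma}\bigr)\,\bigl(|\xi|^{2\sigma}\hat u(\xi)\bigr),
\]
so that $T_s u = K*\D^\sigma u$ with $\widehat K(\xi) := M(\xi)|\xi|^{-2\sigma}$. The previous estimate yields $|\widehat K(\xi)|\le C|\xi|^{-2\sigma}$ near the origin (integrable iff $\sigma<n/2$) and $|\widehat K(\xi)|\le C|\xi|^{2(s-m)-2\sigma}$ at infinity (integrable iff $\sigma>n/2+s-m$); these two conditions are exactly the hypothesis on $\sigma$. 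Thus $\widehat K\in L^1(\R^n)$, so $K$ is continuous and bounded on $\R^n$, and since $\D^\sigma u\in L^1(\R^n)$ for $u\in\s(\R^n)$ (smooth near the origin and decaying like $|x|^{-n-2\sigma}$ at infinity), Young's inequality gives the desired $|T_s u(x)|\le \|K\|_{L^\infty}\|\D^\sigma u\|_{L^1(\R^n)}$.

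The delicate step is the low-frequency control of $M$: the individual pieces $|\xi|^{2(s-j)}$ fail to be smooth at $\xi=0$, and only the arithmetic observation $j\le m-1\le s$ prevents $\widehat K$ from picking up an extra singularity there that would destroy the convolution estimate. Once this is in place, the rest of the argument is essentially bookkeeping on the two integrability exponents and the elementary inequality needed to control the binomial remainder at infinity.
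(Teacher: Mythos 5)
Your proof is correct and follows essentially the same route as the paper: decompose the symbol $(\tau+|\xi|^2)^s$ into $|\xi|^{2s}+\sum_{j=1}^{m-1}c_j|\xi|^{2s-2j}$ plus a remainder (the paper obtains the coefficients and the bound $|E(|\xi|)|\le C(1+|\xi|)^{2s-2m}$ via Taylor's theorem with Lagrange remainder for $t\mapsto t^s$, whereas you use the truncated binomial series with an explicit two-regime estimate), then factor out $|\xi|^{-2\sigma}$ to land the remainder kernel in $L^1$ and conclude by the $L^1$--$L^\infty$ duality of the Fourier transform. The difference is purely cosmetic.
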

\begin{proof}
We set $f(t)=t^{s}$ on $\R^{+}$.  By Taylor's expansion we have
  $$ f(t+\tau)=f(t)+\tau f'(t)+\dots +\frac{\tau^{m-1}}{(m-1)!}f^{m-1}(t)+\frac{\tau^{m}}{m!}f^{m}(\xi_{t}),\quad\text{for some }t<\xi_{t}<t+\tau.$$  In particular
  $$(\tau+t^{2})^{s}=t^{2s}+c_{1}t^{2s-2}+c_{2}t^{2s-4}+\dots +c_{m-1}t^{2s-2m+2}+E(t),$$ where the function $E$ satisfies the estimate
  $$|E(t)|\leq C(1+t)^{2s-2m},\quad t>0.$$  
Therefore, for $u\in\s(\R^{n})$
\begin{align}
\F((\tau I-\Delta)^s u)(\xi)&=(\tau+|\xi|^2)^s\hat{u}\notag\\&=\left(|\xi|^{2s}+c_{1}|\xi|^{2s-2}+\dots +c_{m-1}|\xi|^{2s-2m+2}+E(|\xi|)\right)\hat{u}\notag\\
&=\sum_{j=0}^{m-1}c_{j}|\xi|^{2s-2j}\hat{u}+E(|\xi|)\hat{u}(\xi)\notag \\
&=\sum_{j=0}^{m-1}c_{j}\F(\D^{s-j}u)+E(|\xi|)\hat{u}(\xi),\notag
\end{align}
and hence 
$$(\tau I-\Delta)^{s}u(x)=\sum_{j=0}^{m-1}c_{j}\D^{s-j}u(x)+\mathcal{F}^{-1}(E\hat{u})(x).$$
To estimate the term $\mathcal{F}^{-1}(E\hat{u})$ (uniformly in $x$)  in terms of $L^{1}(\R^{n})$ norm of  (fractional) derivative of $u$, we observe
that 
\begin{align}
|E(|\xi|)\hat{u}(\xi)|&=\left|E(|\xi|)\frac{1}{|\xi|^{2\sigma}}\widehat{\D^{\sigma}u}(\xi)\right|\notag\\&\leq \frac{C}{|\xi|^{2\sigma}(1+|\xi|^2)^{m-s}}\left|\widehat{\D^{\sigma}u}(\xi)\right|\notag\\
&\leq \frac{C}{|\xi|^{2\sigma}(1+|\xi|^2)^{m-s}}
\|\D^{\sigma}u\|_{{L^{1}(\R^{n})}}.\notag
\end{align}
 Thus 
$$\left|\mathcal{F}^{-1}(E\hat{u})(x)\right|\leq C\|E\hat{u}\|_{{L^{1}(\R^{n})}}\leq C\|\D^{\sigma}u\|_{{L^{1}(\R^{n})}},$$ and we complete the proof.
\end{proof}

\begin{lem}[$L^{p}$ Estimate]\label{Lp}
Let $s>0$ be a noninteger. Let $\tau>0$ be any fixed number. Then for $p\in(1,\,\infty)$ there exists $C=C(n,s,p,\tau)>0$ such that 
$$\|(\tau I-\Delta)^{s}u-\D^{s}u\|_{{L^{p}(\R^{n})}}\leq C\left\{\begin{array}{ll}
\|u\|_{{L^{p}(\R^{n})}} & if s<1\\
\|u+\D^{s-1}u\|_{{L^{p}(\R^{n})}} & if s>1.
\end{array}\right.$$
\end{lem}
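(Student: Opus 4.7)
The proof proceeds via the Mikhlin--H\"ormander Fourier multiplier theorem applied to the symbol
\[
m(\xi) := (\tau+|\xi|^2)^s - |\xi|^{2s},
\]
which is the Fourier multiplier associated with $(\tau I-\Delta)^s-\D^s$. In each case the task reduces to verifying the Mikhlin condition $\sup_{\xi\neq 0}|\xi|^{|\alpha|}|\partial^\alpha m(\xi)|<\infty$ for sufficiently many multi-indices $\alpha$, possibly after factoring out a dominating auxiliary symbol.

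For the case $s<1$, I plan to show that $m$ itself is a Mikhlin multiplier. At infinity, the factorization $(\tau+|\xi|^2)^s = |\xi|^{2s}(1+\tau|\xi|^{-2})^s$ together with the identity $(1+t)^s-1 = O(t)$ as $t\to 0$ yields
\[
m(\xi) = |\xi|^{2s-2}\,h(|\xi|^{-2}),\qquad h\in C^\infty,\quad h(0)=s\tau,
\]
and since $2s-2<0$ the function $m$ and all its derivatives decay at infinity, giving Mikhlin bounds there (in fact strictly better). Near the origin, $(\tau+|\xi|^2)^s$ is smooth, while $|\xi|^{2s}$ obeys the scaling estimate $|\xi|^{|\alpha|}|\partial^\alpha|\xi|^{2s}|\le C_\alpha|\xi|^{2s}$, which is uniformly bounded on bounded sets. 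The Mikhlin theorem then yields $\|T_m u\|_{L^p}\le C\|u\|_{L^p}$, which is the claim for $s<1$.

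For the case $s>1$, $m$ grows like $|\xi|^{2(s-1)}$ at infinity, so $T_m$ is not $L^p$-bounded on its own. The strategy is to factor $T_m$ through the operator $T_a$ with symbol
\[
a(\xi) := 1+|\xi|^{2(s-1)},
\]
for which $T_a u = u+\D^{s-1}u$. Writing $T_m = T_{m/a}\circ T_a$, the result will follow from Mikhlin-boundedness of $T_{m/a}$, since then
\[
\|T_m u\|_{L^p}\le C\,\|T_a u\|_{L^p} = C\,\|u+\D^{s-1}u\|_{L^p}.
\]
Pointwise boundedness of $m/a$ is immediate: the ratio tends to $s\tau$ at infinity and to $\tau^s$ at the origin (because $s>1$ forces $a(0)=1$).

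The main technical obstacle is verifying the Mikhlin derivative bounds for $m/a$ in the low-frequency region, where the fractional powers $|\xi|^{2s}$ (in $m$) and $|\xi|^{2(s-1)}$ (in $a$) both have only finite smoothness at the origin. I would handle this by splitting with a smooth cutoff $\chi$ supported in $|\xi|\le 2$: on $\{\chi\equiv 0\}$ the denominator $a$ is smooth and bounded below, and the asymptotic analysis of $m$ from the case $s<1$ transfers almost verbatim; on the support of $\chi$, the inequality $a\ge 1$ keeps $1/a$ harmless, and every non-smooth contribution is of the form $|\xi|^{2\sigma}$ with $\sigma>0$, for which the scaling bound $|\xi|^{|\alpha|}|\partial^\alpha|\xi|^{2\sigma}|=O(|\xi|^{2\sigma})$ remains bounded on compact sets. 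Piecing these estimates together delivers the Mikhlin condition for $m/a$ and completes the proof.
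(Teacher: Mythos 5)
Your proposal is correct and follows essentially the same route as the paper: both decompose the operator via the symbol $m(\xi)=(\tau+|\xi|^2)^s-|\xi|^{2s}$, and for $s>1$ factor it as $\frac{m(\xi)}{1+|\xi|^{2(s-1)}}\cdot(1+|\xi|^{2(s-1)})$ before invoking the H\"ormander--Mikhlin multiplier theorem. The only difference is that you spell out the verification of the Mikhlin symbol estimates in detail, whereas the paper leaves them implicit.
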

\begin{proof}
We have
\begin{align}
\F((\tau I-\Delta)^{s}u)(\xi)-\F(\D^{s}u)(\xi)&=\left((\tau+|\xi|^{2})^{s}-|\xi|^{2s}\right)\hat{u}(\xi)\notag\\
&=\left\{\begin{array}{ll}\notag
\left((\tau+|\xi|^{2})^{s}-|\xi|^{2s}\right)\hat{u}(\xi) &\text{ if }s<1\\
\\
\frac{(\tau+|\xi|^{2})^{s}-|\xi|^{2s}}{1+|\xi|^{2s-2}}(1+|\xi|^{2s-2})\hat{u}(\xi) &\text{ if }s>1
\end{array}\right.\\
&=:\left\{\begin{array}{ll}\notag
m(\xi)\hat{u}(\xi) &\text{ if }s<1\\
\\
m(\xi)\mathcal{F}\left(u+\D^{s-1}u\right)(\xi)&\text{ if }s>1.
\end{array}\right.
\end{align}
Now the proof follows from the Hormander multiplier theorem (see \cite[p. 96]{Stein}).
\end{proof}

The following lemma appears already in \cite[p. 46]{FM}, but for the reader's convenience we give a more detailed proof.
\begin{lem}[Equivalence of norms]\label{norm-equiv}
 Let $\sigma>0$. Then for $p\in(1,\infty)$ there exists a constant $C>0$ such that for every $u\in \s(\R^n)$
 \begin{align}
  \frac 1C\left(\|u\|_{L^p(\R^n)}+\|\D^\sigma u\|_{L^p(\R^n)}\right)&\leq\|(I-\Delta)^\sigma u\|_{L^p(\R^n)}\notag\\
  &\leq C\left(\|u\|_{L^p(\R^n)}+\|\D^\sigma u\|_{L^p(\R^n)}\right). \notag
 \end{align}
\end{lem}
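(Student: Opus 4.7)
The plan is to reduce everything to bounded Fourier multipliers and invoke the H\"ormander--Mikhlin multiplier theorem, exactly as in the proof of Lemma \ref{Lp}. On the Fourier side, $(I-\Delta)^\sigma u$, $u$ and $\D^\sigma u$ correspond to multiplication of $\hat u$ by $(1+|\xi|^2)^\sigma$, $1$ and $|\xi|^{2\sigma}$ respectively, so it suffices to relate these three symbols by quotients that are $L^p$ multipliers.

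For the upper estimate I would factor
$$(1+|\xi|^2)^\sigma = m_1(\xi)\,\bigl(1+|\xi|^{2\sigma}\bigr),\qquad m_1(\xi):=\frac{(1+|\xi|^2)^\sigma}{1+|\xi|^{2\sigma}}.$$
Since $\F(u+\D^\sigma u)(\xi)=(1+|\xi|^{2\sigma})\hat u(\xi)$, this yields the identity $(I-\Delta)^\sigma u=T_{m_1}(u+\D^\sigma u)$, where $T_{m_1}$ denotes the Fourier multiplier with symbol $m_1$. Once $T_{m_1}$ is shown to be bounded on $L^p$, the triangle inequality gives $\|(I-\Delta)^\sigma u\|_{L^p}\leq C(\|u\|_{L^p}+\|\D^\sigma u\|_{L^p})$. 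For the reverse direction I would split into two pieces. The bound $\|u\|_{L^p}\leq C\|(I-\Delta)^\sigma u\|_{L^p}$ is immediate from the representation $u=G_{2\sigma}*(I-\Delta)^\sigma u$, where $G_{2\sigma}\in L^1(\R^n)$ is the Bessel kernel of order $2\sigma$ (with Fourier transform $(1+|\xi|^2)^{-\sigma}$). The bound $\|\D^\sigma u\|_{L^p}\leq C\|(I-\Delta)^\sigma u\|_{L^p}$ follows from $\D^\sigma u=T_{m_3}((I-\Delta)^\sigma u)$ with symbol
$$m_3(\xi):=\frac{|\xi|^{2\sigma}}{(1+|\xi|^2)^\sigma},$$
again modulo a Mikhlin verification.

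The remaining work, and the main obstacle, is to show that $m_1$ and $m_3$ satisfy the H\"ormander--Mikhlin bound $|\partial^\alpha m_j(\xi)|\leq C_\alpha |\xi|^{-|\alpha|}$ for $\xi\neq 0$ and all multiindices $\alpha$ up to the required order. Both symbols are bounded and smooth on $\R^n\setminus\{0\}$, with $m_1\to 1$ as $|\xi|\to 0$ or $\infty$, $m_3\to 1$ as $|\xi|\to\infty$, and $m_3=O(|\xi|^{2\sigma})$ near the origin. The delicate point is the non-smoothness of $|\xi|^{2\sigma}$ at the origin when $\sigma$ is not an integer: each derivative of $|\xi|^{2\sigma}$ lowers the exponent by one, producing contributions of size $|\xi|^{2\sigma-|\alpha|}$ which in isolation would violate the Mikhlin bound. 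However, a routine induction on $|\alpha|$ via the Leibniz rule shows that such terms always appear multiplied by a prefactor of order $|\xi|^{2\sigma}$ near the origin, which absorbs the offending $|\xi|^{-2\sigma}$ and leaves the desired $|\xi|^{-|\alpha|}$ decay; an analogous expansion in negative powers of $|\xi|$ handles the regime $|\xi|\to\infty$. Applying the H\"ormander multiplier theorem (\cite[p.~96]{Stein}) then concludes the proof.
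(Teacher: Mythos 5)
Your proof is correct, and it follows a genuinely different route from the paper's. The paper handles the bound $\|(I-\Delta)^\sigma u\|_{L^p}\leq C(\|u\|_{L^p}+\|\D^\sigma u\|_{L^p})$ indirectly: it quotes Lemma \ref{Lp} (with $\tau=1$), which controls $(I-\Delta)^\sigma u-\D^\sigma u$ by $\|u+\D^{\sigma-1}u\|_{L^p}$ for $\sigma>1$, and then establishes the interpolation inequality $\|\D^s u\|_{L^p}\leq C(\|u\|_{L^p}+\|\D^\sigma u\|_{L^p})$ for $0<s<\sigma$ via a radial cutoff $\varphi$ and the splitting $|\xi|^{2s}=|\xi|^{2s}\varphi(\xi)+|\xi|^{2s-2\sigma}(1-\varphi(\xi))\cdot|\xi|^{2\sigma}$. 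You instead factor the single symbol $(1+|\xi|^2)^\sigma=m_1(\xi)(1+|\xi|^{2\sigma})$ with $m_1(\xi)=(1+|\xi|^2)^\sigma/(1+|\xi|^{2\sigma})$ and check Mikhlin for $m_1$ directly; the reverse inequality (Bessel kernel plus the symbol $m_3(\xi)=|\xi|^{2\sigma}/(1+|\xi|^2)^\sigma$) coincides with the paper. Your version is more self-contained — it does not detour through Lemma \ref{Lp} or any interpolation lemma — at the cost of a slightly more involved multiplier verification; in fact the limited smoothness of $|\xi|^{2\sigma}$ at the origin is equally present in the paper's cutoff symbol $|\xi|^{2s}\varphi(\xi)$, so you are not facing a harder obstacle, only confronting it explicitly. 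One imprecision worth flagging: you assert that contributions of size $|\xi|^{2\sigma-|\alpha|}$ would \emph{violate} the Mikhlin bound ``in isolation''. They would not — since $\sigma>0$ one has $|\xi|^{2\sigma-|\alpha|}\leq C|\xi|^{-|\alpha|}$ for $|\xi|\leq1$ already. The cleaner way to phrase the quotient-rule bookkeeping is that each Leibniz term for $\partial^\alpha m_1$ is a product $\prod_i\partial^{\beta_i}(|\xi|^{2\sigma})/(1+|\xi|^{2\sigma})^{k+1}$ with $\sum_i|\beta_i|=|\alpha|$, hence of order $|\xi|^{2k\sigma-|\alpha|}\leq C|\xi|^{-|\alpha|}$ near the origin, while for $|\xi|\geq1$ one rewrites $m_1=(1+|\xi|^{-2})^\sigma/(1+|\xi|^{-2\sigma})$. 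This does not affect the validity of your argument, only the stated justification.
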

\begin{proof}
 We set $$G_\sigma(x)=\frac{1}{(4\pi)^\frac \sigma2}\frac{1}{\Gamma(\frac \sigma2)}\int_0^\infty e^{-\pi\frac{|x|^2}{t}}e^{-\frac{t}{4\pi}}
 t^{\frac{-n+\sigma}{2}}\frac{dt}{t},$$ which is the Bessel potential of order $\sigma$ (see \cite[p. 130]{Stein}). Then 
 $$\int_{\R^n}G_\sigma(x)dx=1,\quad\hat{G}_\sigma(x)=\frac{1}{(2\pi)^\frac n2}\frac{1}{(1+|x|^2)^\frac\sigma2}.$$ Setting $f=(I-\Delta)^\sigma u$ we can write
 $u=G_{2\sigma}*f$ and by Young's inequality one has $\|u\|_{L^p(\R^n)}\leq \|f\|_{L^p(\R^n)}$. Again writing  $u=G_{2\sigma}*f$
 and taking Fourier transform we obtain 
 $$\F(\D^\sigma u)=|\xi|^{2\sigma}\hat{u}=|\xi|^{2\sigma}\frac{1}{(1+|\xi|^2)^\sigma}\hat{f}=:m(\xi)\hat{f},$$ and by Hormander
 multiplier theorem we get $\|\D^\sigma u\|_{L^p(\R^n)}\leq C\|f\|_{L^p(\R^n)}$. Thus, 
 $$\|u\|_{L^p(\R^n)}+\|\D^\sigma u\|_{L^p(\R^n)}\leq C\|(I-\Delta)^\sigma u\|_{L^p(\R^n)}.$$
 To conclude the lemma, it is sufficient to show that 
 \begin{align}
\|\D^s u\|_{L^p(\R^n)}\leq C(n,s,\sigma,p)(\|u\|_{L^p(\R^n)}+\|\D^\sigma u\|_{L^p(\R^n)}),\quad 0<s<\sigma, \label{inter}  
 \end{align}
 thanks to Lemma \ref{Lp}. 
 
 In order to prove \eqref{inter} we fix a function $\varphi\in C_c^\infty(B_2)$ such that $\varphi=1$ on $B_1$. Then
 $$\F(\D^s u)=|\xi|^{2s}\hat{u}=|\xi|^{2s}\varphi\hat{u}+|\xi|^{2s}(1-\varphi)\hat{u}=m_1(\xi)\hat{u}+m_2(\xi)\F(\D^\sigma u),$$ where
 $m_1(\xi)=|\xi|^{2s}\varphi(\xi)$, $m_2(\xi)=|\xi|^{2s-2\sigma}(1-\varphi(\xi))$ are multipliers and we conclude \eqref{inter}
 by Hormander multiplier theorem.
\end{proof}

\begin{lem}[Embedding to an Orlicz space]\label{orlicz}
 Let $\Omega$ be an open set with finite measure. Then for every $u\in\tilde{H}^{\frac n2,2}(\Omega)$ 
 $$\int_{\Omega}e^{u^2}dx<\infty.$$
\end{lem}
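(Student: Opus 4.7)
The plan is to deduce this from Theorem C (applied with $p=2$ and $\tau=1$) by splitting $u$ as a bounded smooth function plus a remainder whose fractional Sobolev norm is sub--critical. The motivation is that Theorem C gives integrability of $e^{\alpha_{n,2}|w|^{2}}$ whenever $\|(I-\Delta)^{n/4}w\|_{L^{2}(\R^{n})}\leq 1$, so if one can absorb the constant $\alpha_{n,2}$ into the argument by making the Sobolev norm of the remainder smaller than $\sqrt{\alpha_{n,2}/2}$, then $e^{u^{2}}$ becomes integrable on $\Omega$.

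More concretely, I would first use Lemma \ref{norm-equiv} (with $\sigma=n/4$, $p=2$) to note that any $u\in \tilde H^{n/2,2}(\Omega)$ (extended by zero outside $\Omega$) lies in the standard Bessel potential space $H^{n/2,2}(\R^{n})$, and then invoke the density of $C_{c}^{\infty}(\R^{n})$ in $H^{n/2,2}(\R^{n})$ (mollification plus smooth cutoff) to pick $\varphi\in C_{c}^{\infty}(\R^{n})$ with
$$2\|(I-\Delta)^{n/4}(u-\varphi)\|_{L^{2}(\R^{n})}^{2}\leq \alpha_{n,2}.$$
From the elementary inequality $(a+b)^{2}\leq 2a^{2}+2b^{2}$ I obtain
$$\int_{\Omega}e^{u^{2}}dx \leq e^{2\|\varphi\|_{\infty}^{2}}\int_{\Omega}e^{2(u-\varphi)^{2}}dx.$$
Setting $w:=(u-\varphi)/\|(I-\Delta)^{n/4}(u-\varphi)\|_{L^{2}(\R^{n})}$ (the case where the denominator vanishes is trivial, since then $u\equiv \varphi$ is bounded), the function $w$ belongs to $\tilde H^{n/2,2}(\R^{n})$ with $\|(I-\Delta)^{n/4}w\|_{L^{2}(\R^{n})}=1$, so Theorem C gives $\int_{\Omega}e^{\alpha_{n,2}w^{2}}dx\leq C|\Omega|$. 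Since by the choice of $\varphi$ we have $2(u-\varphi)^{2}\leq \alpha_{n,2}w^{2}$, combining these estimates yields $\int_{\Omega}e^{u^{2}}dx\leq C e^{2\|\varphi\|_{\infty}^{2}}|\Omega|<\infty$.

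The only genuinely non-formal point is the density of $C_{c}^{\infty}(\R^{n})$ in $H^{n/2,2}(\R^{n})$, which is standard; if one prefers to avoid citing it, one could instead use density of $\mathcal{S}(\R^{n})$ via the Fourier characterization of the norm and then multiply by a smooth bump function, paying only with a finite $L^{\infty}$ norm of $\varphi$, which is all that enters the final bound. Note also that the argument is robust: the same proof gives $\int_{\Omega}e^{c u^{2}}dx<\infty$ for every $c>0$, by choosing $\varphi$ with $2c\|(I-\Delta)^{n/4}(u-\varphi)\|_{L^{2}(\R^{n})}^{2}\leq \alpha_{n,2}$.
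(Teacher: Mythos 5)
Your proof is correct but takes a genuinely different route from the paper's. The paper sets $f=(-\Delta)^{n/4}u$, writes $u(x)=\int_\Omega G(x,y)f(y)\,dy$ via a Green's representation with $0\le G(x,y)\le C_n|x-y|^{-n/2}$, truncates $f$ at height $M$ so that the tail $\tilde f=f\chi_{\{|f|>M\}}$ has arbitrarily small $L^2$ norm, deduces $|u|\le C(M)+C_n I_{n/2}\tilde f$, and concludes by Adams' sharp Riesz-potential inequality. You instead approximate $u$ itself by a smooth compactly supported $\varphi$ in the Bessel-potential norm, bound $u^2\le 2\varphi^2+2(u-\varphi)^2$, and apply Theorem~C to the normalized remainder $w$. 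Both arguments split off a bounded part and apply an Adams-type inequality to a remainder with small norm, but the paper works on $(-\Delta)^{n/4}u$ and remains constructive, while you work on $u$ directly and rely on two standard-but-nontrivial inputs: that $u\in\tilde H^{\frac n2,2}(\Omega)$ (extended by zero) indeed lies in the Bessel space $H^{\frac n2,2}(\R^n)$, which requires either extending Lemma~\ref{norm-equiv} beyond Schwartz functions or a direct Fourier-side argument, and the density of $C_c^\infty(\R^n)$ in that space. Your route avoids the Green's representation and makes the stronger conclusion $\int_\Omega e^{cu^2}\,dx<\infty$ for every $c>0$ transparent; the paper's route is more self-contained within the tools already developed in the text.
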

\begin{proof} 
We set $f=\D^\frac n4u$. By \cite[Proposition 8]{LM} we have
$$u(x)=\int_{\Omega}G(x,y)f(y)dy,\quad 0\leq G(x,y)\leq\frac{C_n}{|x-y|^\frac n2},$$ where $G$ is a Greens function. 

We choose $M>0$ large enough such that $\|\tilde{f}\|_{L^{2}}C_{n}<\alpha_{0}$, where $\tilde{f}=f-f\chi_{\{|f|\leq M\}}$.
Then 
$$|u(x)|\leq C(M)+C_{n}I_{\frac n2}\tilde{f}(x),\quad I_{\frac n2}\tilde{f}(x):=\int_{\Omega}\frac{|\tilde{f}(y)|}{|x-y|^{\frac n2}}dy,$$ 
and by \cite[Theorem 2]{Adams} we  conclude the proof. 
\end{proof}

As a consequence of the above lemma one can prove  a higher dimensional generalization of  Lions lemma \cite{Lions} 
(for a simple proof see e.g. \cite[Lemma 2.6]{IS}), namely
\begin{lem}[Lions]\label{lions}
 Let $u_k$ be a sequence in $\tilde{H}^{\frac n2,2}(\Omega)$ such that 
 $$u_k\rightharpoonup u\text{ in }\tilde{H}^{\frac n2,2}(\Omega),\quad 0<\|\D^ \frac n4u\|_{L^2(\R^n)}<1,\quad \|\D^ \frac n4u_k\|_{L^2(\R^n)}=1.$$
 Then for every $0<p<\left(1-\|\D^ \frac n4u\|^2_{L^2(\R^n)}\right)^{-1}$, the sequence $\left\{e^{\alpha_0pu_k}\right\}_1^\infty$ is bounded in $L^1(\Omega)$. 
\end{lem}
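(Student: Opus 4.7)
I would follow the classical concentration-compactness strategy of Lions. Setting $r_k := u_k - u$, the weak convergence $r_k \rightharpoonup 0$ in $\tilde{H}^{\frac n2,2}(\Omega)$ gives $\D^{\frac n4} r_k \rightharpoonup 0$ in $L^2(\R^n)$; expanding the squared Hilbert norm and discarding the vanishing cross term yields
$$\|\D^{\frac n4} r_k\|_{L^2(\R^n)}^2 = \|\D^{\frac n4} u_k\|_{L^2(\R^n)}^2 - \|\D^{\frac n4} u\|_{L^2(\R^n)}^2 + o(1) = 1 - \|\D^{\frac n4} u\|_{L^2(\R^n)}^2 + o(1).$$

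For the main estimate I would combine, for any $\delta>0$, the pointwise bound $u_k^2\le(1+\delta)r_k^2+C_\delta u^2$ with H\"older's inequality at conjugate exponents $q,q'>1$ to split
$$\int_\Omega e^{\alpha_0 p\, u_k^2}\, dx \le \left(\int_\Omega e^{\alpha_0 p (1+\delta) q\, r_k^2}\, dx\right)^{\!1/q}\!\left(\int_\Omega e^{\alpha_0 p C_\delta q'\, u^2}\, dx\right)^{\!1/q'}.$$
The second factor is finite for every admissible choice of $\delta,q'$, because $\lambda u\in\tilde{H}^{\frac n2,2}(\Omega)$ for each $\lambda>0$ and Lemma \ref{orlicz} applied to $\lambda u$ yields $e^{\lambda^2 u^2}\in L^1(\Omega)$.

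For the first factor I would normalize $\tilde r_k:=r_k/\|\D^{\frac n4} r_k\|_{L^2(\R^n)}$, so that $\tilde r_k\in\tilde{H}^{\frac n2,2}(\Omega)$ with $\|\D^{\frac n4}\tilde r_k\|_{L^2(\R^n)}=1$, and rewrite
$$\alpha_0 p(1+\delta)q\, r_k^2 = \alpha_0\,\bigl[\,p(1+\delta)q\,\|\D^{\frac n4} r_k\|_{L^2(\R^n)}^2\,\bigr]\,\tilde r_k^2.$$
The strict hypothesis $p(1-\|\D^{\frac n4}u\|_{L^2(\R^n)}^2)<1$ lets me pick $\delta>0$ small and $q>1$ sufficiently close to $1$ so that the bracketed coefficient stays $\le 1$ for all $k$ large; Theorem B (with $p=2$) then gives a uniform bound on the first factor. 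The finitely many initial indices are absorbed again via Lemma \ref{orlicz} applied to $\lambda u_k$.

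The only real obstacle is the joint smallness of the three quantities $\delta$, $q-1$, and the $o(1)$ correction coming from weak convergence: they must be tuned so that the normalized coefficient above stays $\le 1$. This is purely a continuity argument, made possible by the strictly positive gap $1-p(1-\|\D^{\frac n4}u\|_{L^2(\R^n)}^2)>0$. Beyond this bookkeeping, the proof reduces to standard ingredients: the weak-limit Hilbert expansion, H\"older, and the sharp embedding Theorem B.
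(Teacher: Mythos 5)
Your proof is correct and is the standard Lions concentration--compactness argument. The paper does not supply a proof of this lemma itself: it remarks that it is a consequence of Lemma \ref{orlicz} and points to \cite[Lemma 2.6]{IS}, and that reference proceeds exactly as you do --- decompose $u_k=(u_k-u)+u$, use Young's inequality $u_k^2\le(1+\delta)(u_k-u)^2+C_\delta u^2$, apply H\"older, bound the fixed factor via the Orlicz-type integrability $\int_\Omega e^{\lambda^2 u^2}\,dx<\infty$ supplied by Lemma \ref{orlicz}, and bound the remainder factor by the sharp Adams embedding after the Hilbert-space expansion $\|\D^{\frac n4}(u_k-u)\|_{L^2(\R^n)}^2=1-\|\D^{\frac n4}u\|_{L^2(\R^n)}^2+o(1)$ drives the normalized coefficient below $1$; the gap $p\bigl(1-\|\D^{\frac n4}u\|^2_{L^2(\R^n)}\bigr)<1$ is exactly what makes the $\delta$, $q$, $o(1)$ bookkeeping close. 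One small remark: the printed statement has a typo, $e^{\alpha_0 p u_k}$ in place of $e^{\alpha_0 p u_k^2}$, which you correctly read.
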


\begin{lem}[Poincar\'e inequality]\label{poincare}
Let $\Omega$ be an open set with finite measure. Then there exists a constant $C>0$ such that 
$$\|u\|_{{L^{2}(\Omega)}}\leq C\|\D^{\frac s2}u\|_{{L^{2}(\R^{n})}},\text{ for every }u\in\tilde{H}^{s,2}(\Omega).$$
\end{lem}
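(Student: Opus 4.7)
The plan is to use a Fourier-split argument exploiting the fact that $u$ is supported in a set of finite measure, so that its Fourier transform is uniformly bounded.

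First, since $u\in\tilde H^{s,2}(\Omega)$ vanishes outside $\Omega$, Cauchy--Schwarz applied to $u=u\chi_\Omega$ gives
$$\|u\|_{L^1(\R^n)}=\int_\Omega|u|\,dx\leq|\Omega|^{1/2}\|u\|_{L^2(\Omega)},$$
so $u\in L^1(\R^n)\cap L^2(\R^n)$ and therefore $\hat u$ is defined classically with
$$\|\hat u\|_{L^\infty(\R^n)}\leq(2\pi)^{-n/2}\|u\|_{L^1(\R^n)}\leq C_n|\Omega|^{1/2}\|u\|_{L^2(\Omega)}.$$
By Plancherel, $\|u\|_{L^2(\Omega)}^2=\|u\|_{L^2(\R^n)}^2=\|\hat u\|_{L^2(\R^n)}^2$, and $\|\D^{s/2}u\|_{L^2(\R^n)}^2=\int_{\R^n}|\xi|^{2s}|\hat u(\xi)|^2\,d\xi$.

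Next, for any $R>0$ I would split $\R^n=\{|\xi|<R\}\cup\{|\xi|\geq R\}$ and estimate
$$\int_{|\xi|<R}|\hat u(\xi)|^2\,d\xi\leq|B_R|\,\|\hat u\|_{L^\infty}^2\leq C_n'\,R^n|\Omega|\,\|u\|_{L^2(\Omega)}^2,$$
while on the complement,
$$\int_{|\xi|\geq R}|\hat u(\xi)|^2\,d\xi\leq R^{-2s}\int_{|\xi|\geq R}|\xi|^{2s}|\hat u(\xi)|^2\,d\xi\leq R^{-2s}\|\D^{s/2}u\|_{L^2(\R^n)}^2.$$
Adding the two estimates gives
$$\|u\|_{L^2(\Omega)}^2\leq C_n'\,R^n|\Omega|\,\|u\|_{L^2(\Omega)}^2+R^{-2s}\|\D^{s/2}u\|_{L^2(\R^n)}^2.$$

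Finally I would choose $R>0$ so that $C_n'R^n|\Omega|=1/2$, i.e.\ $R=c_n|\Omega|^{-1/n}$ for a suitable dimensional constant. Absorbing the first term on the left then yields
$$\|u\|_{L^2(\Omega)}^2\leq 2R^{-2s}\|\D^{s/2}u\|_{L^2(\R^n)}^2=C(n,s)|\Omega|^{2s/n}\|\D^{s/2}u\|_{L^2(\R^n)}^2,$$
which is the desired inequality with $C=C(n,s)|\Omega|^{s/n}$.

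This argument works uniformly in $s>0$, bypassing the usual Hardy--Littlewood--Sobolev route (which only covers $s<n/2$) and without needing any Green representation or regularity hypothesis on $\partial\Omega$. There is no real obstacle; the only point requiring minor care is justifying the classical Fourier transform and Plancherel identity, but these follow immediately from $u\in L^1(\R^n)\cap L^2(\R^n)$, which in turn follows from $|\Omega|<\infty$.
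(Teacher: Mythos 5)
Your proof is correct and follows essentially the same route as the paper's: bound $\|\hat u\|_\infty$ via $\|u\|_{L^1}\leq|\Omega|^{1/2}\|u\|_{L^2}$, split the Plancherel integral at frequency $R$, absorb the low-frequency contribution by choosing $R$ with $C\,R^n|\Omega|=1/2$, and bound the high-frequency part by $R^{-2s}\|\D^{s/2}u\|_{L^2}^2$. The only difference is cosmetic (you name $R$ where the paper uses $\delta$, and you note the explicit $|\Omega|^{s/n}$ dependence of the constant).
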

\begin{proof}
We have $$|\hat{u}(\xi)|\leq \frac{1}{(2\pi)^{\frac n2}}\|u\|_{L^{1}(\Omega)}\leq  \frac{1}{(2\pi)^{\frac n2}}|\Omega|^{\frac12}\|u\|_{L^{2}(\Omega)},$$
and hence
\begin{align}
\|u\|^{2}_{{L^{2}(\Omega)}}&=\int_{\R^{n}}|\hat{u}|^{2}d\xi=\int_{|\xi|<\delta}|\hat{u}|^{2}d\xi+\int_{|\xi|\geq\delta}|\hat{u}|^{2}d\xi\notag\\
&\leq \frac{1}{(2\pi)^n}|\Omega|\|u\|^{2}_{{L^{2}(\Omega)}}|B_1|\delta^{n}+\delta^{-2s}\int_{|\xi|\geq\delta}|\xi|^{2s}|\hat{u}|^{2}d\xi\notag\\
&\leq \frac{1}{(2\pi)^n}|\Omega||B_1|\delta^{n}\|u\|^{2}_{{L^{2}(\Omega)}}+\delta^{-2s}\int_{\R^{n}}|\F(\D^{\frac s2}u)(\xi)|^{2}d\xi.   \notag
\end{align}
Choosing $\delta>0$ so that $\frac{1}{(2\pi)^n}|\Omega||B_1|\delta^{n}=\frac12$ we complete the proof. 
\end{proof}

\begin{lem}[Compact embedding]\label{compact}
 Let  $\Omega$ be an open set in $\R^n$ with finite measure. Then the embedding $\tilde{H}^{s,2}(\Omega)\hookrightarrow \tilde{H}^{r,2}(\Omega)$
 is compact for any $0\leq r<s$ (with the notation $\tilde{H}^{0,2}(\Omega)=L^2(\Omega)$).  Moreover, $\tilde{H}^{\frac n2,2}(\Omega)\hookrightarrow L^p(\Omega)$ is compact for any $p\in[1,\infty)$.
\end{lem}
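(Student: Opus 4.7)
The plan is to handle the two statements in sequence. For the first part, I would take a bounded sequence $\{u_k\}$ in $\tilde{H}^{s,2}(\Omega)$, extract a weakly convergent subsequence $u_k\rightharpoonup u$ by reflexivity, and by linearity assume $u=0$. Working on the Fourier side I would split
$$\|\D^{r/2}u_k\|_{L^2(\R^n)}^2 = \int_{|\xi|\leq R}|\xi|^{2r}|\hat{u}_k(\xi)|^2\,d\xi + \int_{|\xi|>R}|\xi|^{2r}|\hat{u}_k(\xi)|^2\,d\xi,$$
and control the high-frequency tail by $R^{2(r-s)}\|\D^{s/2}u_k\|_{L^2(\R^n)}^2$, which can be made uniformly small in $k$ by choosing $R$ large, since $r<s$.

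For the low-frequency piece, the key observation is that $|\Omega|<\infty$ makes $\chi_\Omega e^{-ix\cdot\xi}\in L^2(\R^n)$ for each fixed $\xi$. Since weak convergence in $\tilde{H}^{s,2}(\Omega)$ entails weak convergence of $u_k$ to $0$ in $L^2(\R^n)$ (the $u_k$ all vanish outside $\Omega$), this gives $\hat{u}_k(\xi)\to 0$ pointwise. The uniform bound $|\hat{u}_k(\xi)|\leq (2\pi)^{-n/2}|\Omega|^{1/2}\|u_k\|_{L^2(\Omega)}$ then allows dominated convergence on the compact ball $\{|\xi|\leq R\}$, concluding $\|\D^{r/2}u_k\|_{L^2(\R^n)}\to 0$.

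For the second statement, the first part with $r=0$ produces (along a subsequence) $u_k\to u$ in $L^2(\Omega)$. For $1\leq p\leq 2$ H\"older's inequality combined with $|\Omega|<\infty$ gives $L^p$ convergence directly. For $p>2$ I would combine $L^2$ convergence with a uniform bound in a larger $L^q$ space: Poincar\'e (Lemma \ref{poincare}) controls $\|u_k\|_{L^2(\Omega)}$ by $\|\D^{n/4}u_k\|_{L^2(\R^n)}$, so Theorem B applied with exponent $p=2$ (after rescaling by the uniform bound on $\|\D^{n/4}u_k\|_{L^2(\R^n)}$) yields $\sup_k\int_\Omega e^{\alpha u_k^2}\,dx<\infty$ for some $\alpha>0$. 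Taylor-expanding the exponential gives $\sup_k\|u_k\|_{L^q(\Omega)}<\infty$ for every finite $q$, and the interpolation
$$\|u_k-u\|_{L^p(\Omega)}\leq \|u_k-u\|_{L^2(\Omega)}^\theta\,\|u_k-u\|_{L^q(\Omega)}^{1-\theta},\qquad \tfrac{1}{p}=\tfrac{\theta}{2}+\tfrac{1-\theta}{q},$$
with $q>p$ and $\theta\in(0,1)$ concludes the argument.

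The main delicate point is justifying the pointwise convergence $\hat{u}_k(\xi)\to 0$: it hinges crucially on the finiteness of $|\Omega|$, which is precisely what makes $\chi_\Omega e^{-ix\cdot\xi}$ an admissible test function against the weakly convergent sequence in $L^2(\R^n)$. Without finite measure the Fourier splitting cannot be closed on low frequencies, which is ultimately why the compactness statement needs the hypothesis $|\Omega|<\infty$.
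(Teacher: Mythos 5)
Your proof is correct, and it takes a genuinely different route from the paper's. The paper establishes the first claim in four steps: a tail estimate $\|u\|_{L^2(\Omega\cap B_R^c)}\leq\e\|u\|_{\tilde H^{s,2}(\Omega)}$ proved by a cut-off argument on the Fourier side; a reduction to $0<s<1$; the fractional Rellich--Kondrachov theorem on balls quoted from \cite{Valdinoci} together with a diagonal extraction over an exhaustion of $\R^n$; and composition of compact operators. You replace all of this with a single global Fourier-side computation. The decisive observation is that finiteness of $|\Omega|$ makes $\chi_\Omega e^{-ix\cdot\xi}$ an $L^2(\R^n)$ test function, so weak $L^2(\R^n)$ convergence of the sequence (which follows from weak convergence in $\tilde H^{s,2}(\Omega)$ because the inclusion into $L^2(\R^n)$ is bounded, hence weak-to-weak continuous) gives $\hat u_k(\xi)\to 0$ pointwise; together with the uniform bound $|\hat u_k(\xi)|\leq(2\pi)^{-n/2}|\Omega|^{1/2}\|u_k\|_{L^2(\Omega)}$ this closes the low-frequency integral by dominated convergence, while $r<s$ closes the high-frequency tail via $\int_{|\xi|>R}|\xi|^{2r}|\hat u_k|^2d\xi\leq R^{2(r-s)}\|\D^{s/2}u_k\|_{L^2}^2$. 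This is more elementary and self-contained -- it bypasses both the cut-off estimate and the external local compactness theorem -- and is closer in flavour to the proof of Lemma~\ref{poincare} than to the paper's own argument. For the $L^p$ compactness the paper goes through Vitali's theorem (Lemma~\ref{epsilon-conv}), whereas you interpolate between $L^2$ and a uniformly bounded $L^q$ norm; both rest on the same exponential moment bound from Theorem B and both work. The only thing worth making explicit in your write-up is that $u_k\in L^1(\R^n)$ (by H\"older and $|\Omega|<\infty$), so $\hat u_k$ is a continuous function and the pointwise statement $\hat u_k(\xi)\to 0$ is meaningful.
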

\begin{proof}
We prove the lemma in few steps.

\noindent \textbf{Step 1} 
The embedding $\tilde{H}^{s,2}(\Omega)\hookrightarrow \tilde{H}^{r,2}(\Omega)$ is continuous for any $0\leq r<s$.

With the notation $\D^0u=u$ we see that 
\begin{align}
 \|\D^\frac r2u\|_{L^2(\R^n)}^2&=\int_{\R^n}|\xi|^{2r}|\hat{u}|^2d\xi=\int_{|\xi|\leq 1}|\xi|^{2r}|\hat{u}|^2d\xi+\int_{|\xi|>1}|\xi|^{2r}|\hat{u}|^2d\xi\notag\\
 &\leq \int_{|\xi|\leq 1}|\hat{u}|^2d\xi+\int_{|\xi|>1}|\xi|^{2s}|\hat{u}|^2d\xi\leq \|u\|_{L^2(\Omega)}^2+\|\D^\frac s2u\|_{L^2(\R^n)}^2,\notag
\end{align}
 which is Step 1, thanks to Lemma \ref{poincare}
\medskip
 
 \noindent \textbf{Step 2}
 For a given $s>0$ and a given $\e>0$ there exists $R>0$ such that 
 $$\|u\|_{L^2(\Omega\cap B_R^c)}\leq \e\| u\|_{\tilde{H}^{s,2}(\Omega)},\quad \text{for every }u\in\tilde{H}^{s,2}(\Omega).$$ 
 To prove Step 2 it is sufficient to consider $0<s<1$, thanks to Step 1.

We fix $\varphi\in C_c^\infty(B_2)$ such that $\varphi=1$ on $B_1$ and $0\leq\varphi\leq 1$. Setting $\varphi_r(x)=\varphi(\frac xr)$ we get
\begin{align}
 &\|(1-\varphi_r)u\|_{L^2(\R^n)}^2=\|\F((1-\varphi_r)u)\|_{L^2(\R^n)}^2\notag\\
 &=\int_{|\xi|<R_1}|\F((1-\varphi_r)u)|^2d\xi+\int_{|\xi|\geq R_1}|\F((1-\varphi_r)u)|^2d\xi\notag\\
 &\leq \frac{1}{(2\pi)^n}|B_{R_1}|\left(\int_{\R^n}|(1-\varphi_r)u|dx\right)^2+R_1^{-2s}\int_{|\xi|\geq R_1}|\xi|^{2s}|\F((1-\varphi_r)u)|^2d\xi \notag\\
 &=:I_1+I_2.\notag
 \end{align}
Using that  $supp \,(1-\varphi_r)u\subset \Omega\cap B_r^c$ and by  H\"older inequality we bound
\begin{align}
 I_1\leq \frac{1}{(2\pi)^n}|B_{R_1}||\Omega\cap B_r^c|\int_{\Omega\cap B_r^c}|(1-\varphi_r)u|^2dx\leq \frac{1}{(2\pi)^n}|B_{R_1}||\Omega\cap B_r^c|\|u\|_{L^2(\Omega)}^2.\notag
\end{align}
From \cite[Proposition 3.4]{Valdinoci} we have 
$$\int_{\R^n}|\xi|^{2s}|\hat{u}|^2d\xi=C(n,s)\int_{\R^n}\int_{\R^n}\frac{|u(x)-u(y)|^2}{|x-y|^{n+2s}}dxdy,$$ and hence
\begin{align}
 I_2&\leq R_1^{-2s}\int_{\R^n}|\xi|^{2s}|\F((1-\varphi_r)u)|^2d\xi\notag\\
  &= C_0R_1^{-2s} \int_{\R^n\times\R^n}\frac{((1-\varphi_r(x))u(x)-(1-\varphi_r(y))u(y))^2}{|x-y|^{n+2s}}dxdy\notag\\
  & =C_0R_1^{-2s}\int_{\R^n\times\R^n}\frac{\left((1-\varphi_r(x))(u(x)-u(y))-u(y)(\varphi_r(x)-\varphi_r(y))\right)^2}{|x-y|^{n+2s}}dxdy\notag\\
  &\leq 2C_0R_1^{-2s}\int_{\R^n\times\R^n}\left(\frac{(1-\varphi_r(x))^2(u(x)-u(y))^2}{|x-y|^{n+2s}}+\frac{u^2(y)(\varphi_r(x)-\varphi_r(y))^2}{|x-y|^{n+2s}}\right)dxdy\notag\\
  &\leq 2C_0R_1^{-2s}\int_{\R^n\times\R^n}\frac{(u(x)-u(y))^2}{|x-y|^{n+2s}}dxdy+2C_0R_1^{-2s}\int_{\R^n}u^2(y)\int_{\R^n}\frac{(\varphi_r(x)-\varphi_r(y))^2}{|x-y|^{n+2s}}dxdy\notag\\
  &\leq C_1R_1^{-2s}(\|\D^su|\|_{L^2(\R^n)}^2+\|u\|_{L^2(\Omega)}^2),\notag
\end{align}
where in the  last inequality we have used that 
$$\int_{\R^n}\frac{(\varphi_r(x)-\varphi_r(y))^2}{|x-y|^{n+2s}}dx\leq C,\quad y\in \R^n,\,r\geq1.$$
 Thus we have Step 2 by choosing $R$ so that $|B_{R_1}||\Omega\cap B_R^c|<\frac \e2$ where $C_1R_1^{-2s}=\frac{\e}{2}$.
\medskip
 
 \noindent\textbf{Step 3} The embedding $\tilde{H}^{s,2}(\Omega)\hookrightarrow L^2(\Omega)$ is compact for any $0<s<1$.
 
 Let us consider a bounded sequence $\{u_k\}_{k=1}^\infty$ in $\tilde{H}^{s,2}(\Omega)$. Let  $\varphi$, $\varphi_\ell$ be as in Step 2 (here $\ell\in\mathbb{N}$).
 Then for a fixed $\ell$ the sequence $\{\varphi_\ell u_k\}_{k=1}^\infty$ is bounded in $\tilde{H}^{s,2}(\Omega)$ (the proof is very similar
 to the estimate of $I_2$ in Step 2).
 
 Since the embedding $\tilde{H}^{s,2}(B_r)\hookrightarrow L^2(B_r)$ is compact 
  (see e.g. \cite[Theorem 7.1]{Valdinoci}), there exists a subsequence 
  $\{u_k^1\}_{k=1}^\infty$ such that $\varphi_1u_k^1\rightarrow u^1$ in $L^2(B_2)$. Inductively we will have 
  $\varphi_\ell u_k^\ell\rightarrow u^\ell$ in $L^2(B_{2\ell})$ where $\{u_k^{\ell+1}\}_{k=1}^\infty$ is a subsequence of $\{u_k^{\ell}\}_{k=1}^\infty$ for $\ell\geq1$.
  Moreover, we have
  $u^{\ell+1}=u^\ell$ on $B_\ell$. Setting $u=\lim_{\ell\to\infty}u^\ell$ it follows that $u_k^k$ converges to $u$ in $L^2(\Omega)$, thanks to Step 2.
  \medskip
  
  \noindent\textbf{Step 4} The embedding $\tilde{H}^{s,2}(\Omega)\hookrightarrow \tilde{H}^{r,2}(\Omega)$ is compact for any $0\leq r<s$.
  
  Since the composition of two compact operators is compact, we can assume that $s-r<1$. 
  
  Let $\{u_k\}_{k=1}^\infty$ be a bounded sequence in $\tilde{H}^{s,2}(\Omega)$. Setting $v_k=\D^\frac r2 u_k$ we see that $\{v_k\}_{k=1}^\infty$
  is a a bounded sequence in $\tilde{H}^{s-r,2}(\Omega)$. Then by Step 3 (up to a subsequence) $v_k$ converges to some $v$ in $L^2(\Omega)$ which is 
  equivalent to saying that (up to a subsequence) $u_k$ converges to some $u$ in $\tilde{H}^{r,2}(\Omega)$.
  
  Finally, compactness of the embedding $\tilde{H}^{\frac n2,2}(\Omega)\hookrightarrow L^p(\Omega)$ follows from the compactness of 
  $\tilde{H}^{\frac n2,2}(\Omega)\hookrightarrow L^2(\Omega)$, Theorem B and Lemma \ref{epsilon-conv}.
  \end{proof}

\begin{lem}[Exact constant]\label{log}
 We set $$f(x)=\log\frac{1}{|x|},\quad x\in \R^n.$$ Then
 $$\D^\sigma f(x)=\gamma_n2^{2\sigma-n}\pi^{-\frac n2}\frac{\Gamma(\sigma)}{\Gamma(\frac{n-2\sigma}{2})}\frac{1}{|x|^{2\sigma}},\quad 0<\sigma<\frac n2,$$
 where $\Gamma$ is the  gamma function and $\gamma_n=\frac{(n-1)!}{2}|S^n|.$
\end{lem}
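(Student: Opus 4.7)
The plan is to derive the identity by analytic continuation from the standard Riesz-type formula
$$(-\Delta)^\sigma |x|^{-\beta} = \lambda(\beta)\,|x|^{-\beta-2\sigma}, \qquad \lambda(\beta):=2^{2\sigma}\frac{\Gamma\!\left(\frac{\beta+2\sigma}{2}\right)\Gamma\!\left(\frac{n-\beta}{2}\right)}{\Gamma\!\left(\frac{\beta}{2}\right)\Gamma\!\left(\frac{n-\beta-2\sigma}{2}\right)},$$
which is valid for $0<\beta<n-2\sigma$. I would first verify this formula by taking the Fourier transform of the tempered distribution $|x|^{-\beta}$ (which is a constant multiple of $|\xi|^{\beta-n}$), multiplying by $|\xi|^{2\sigma}$, and transforming back.

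Next, since $f(x)=-\log|x|=\partial_\beta |x|^{-\beta}\big|_{\beta=0}$, and since the fractional Laplacian (defined via duality against $\mathcal{S}(\R^n)$) commutes with differentiation in the parameter $\beta$, I would differentiate the Riesz identity in $\beta$ at $\beta=0$. Observing that $\lambda(0)=0$ (thanks to the pole of $\Gamma(\beta/2)$ at the origin), the product rule gives
$$(-\Delta)^\sigma f(x)=\lambda'(0)\,|x|^{-2\sigma}.$$
The value $\lambda'(0)$ is then read off from the expansion $1/\Gamma(\beta/2)=\beta/2+O(\beta^2)$, yielding
$$\lambda'(0)=2^{2\sigma-1}\frac{\Gamma(\sigma)\Gamma(n/2)}{\Gamma\!\left(\frac{n-2\sigma}{2}\right)}.$$

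Finally, I would convert $\Gamma(n/2)$ into the claimed form by combining the identities $|S^{n-1}|=2\pi^{n/2}/\Gamma(n/2)$, $|S^n|=2\pi^{(n+1)/2}/\Gamma((n+1)/2)$, and Legendre's duplication formula $\Gamma(n/2)\Gamma((n+1)/2)=2^{1-n}\sqrt{\pi}\,(n-1)!$. Multiplying these gives $2^{n-1}\pi^{n/2}\Gamma(n/2)=\tfrac{(n-1)!}{2}|S^n|=\gamma_n$, and rearranging the constant above produces exactly $\gamma_n 2^{2\sigma-n}\pi^{-n/2}\Gamma(\sigma)/\Gamma((n-2\sigma)/2)$.

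The main technical obstacle is justifying the interchange of $\partial_\beta$ with the distributional fractional Laplacian, since $\log(1/|x|)$ does not decay at infinity. To handle this, I would fix $\varphi\in\mathcal{S}(\R^n)$ and work with the pairing $\beta\mapsto\int |x|^{-\beta}(-\Delta)^\sigma\varphi(x)\,dx$, verifying differentiability at $\beta=0$ by dominated convergence: the singularity of $|x|^{-\beta}\log|x|$ at the origin is locally integrable uniformly for $\beta$ near $0$, and the Schwartz decay of $(-\Delta)^\sigma\varphi$ (which is $O(|x|^{-n-2\sigma})$ by known estimates) controls the tails against the slowly growing factor $\log|x|$.
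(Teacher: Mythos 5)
Your proposal is correct, and it is a genuinely different route from the paper's. The paper's proof never touches the power-law family $|x|^{-\beta}$: it first reduces to a single point via scaling, $\D^\sigma f(x)=\D^\sigma f(e_1)\,|x|^{-2\sigma}$, then determines the single unknown constant $\D^\sigma f(e_1)$ by pairing against a test function and invoking the fact that $\frac{1}{\gamma_n}\log\frac{1}{|x|}$ is the fundamental solution of $\D^{n/2}$, i.e.\ $\int \log\frac{1}{|x|}\,\D^{n/2}\varphi\,dx=\gamma_n\varphi(0)$; splitting $\D^{n/2}=\D^\sigma\circ\D^{n/2-\sigma}$ and applying the Riesz-kernel Fourier transform identity \eqref{constant} then closes the computation. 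You instead start from the Riesz power identity $\D^\sigma|x|^{-\beta}=\lambda(\beta)|x|^{-\beta-2\sigma}$ (which you would verify with the same Fourier identity \eqref{constant}) and generate the logarithm by differentiating in the homogeneity parameter at $\beta=0$, using $\lambda(0)=0$ to kill the term that would otherwise carry a $\log$ in the target, and extracting $\lambda'(0)$ from the simple pole of $\Gamma(\beta/2)$. Both approaches hinge on the same Fourier transform of $|x|^{-\gamma}$ and on the Legendre/$|S^n|$ bookkeeping at the end, so the final arithmetic is identical; the difference is that the paper gets the logarithm for free from the known fundamental solution, while you manufacture it by parameter differentiation. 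Your route has the advantage of not presupposing the fundamental-solution formula for $\D^{n/2}$, at the price of the interchange-of-limits argument — which you correctly flag and whose justification (both sides of the paired identity $\int|x|^{-\beta}\D^\sigma\varphi\,dx=\lambda(\beta)\int|x|^{-\beta-2\sigma}\varphi\,dx$ vanish at $\beta=0$ and are $C^1$ there, by dominated convergence using local integrability of $|x|^{-\beta_0}|\log|x||$ near $0$ and the $O(|x|^{-n-2\sigma})$ decay of $\D^\sigma\varphi$) is sound. No gap; the proposal is a complete alternative proof.
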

\begin{proof}
 Using a rescaling argument one can get (see for e.g. \cite[Lemma A.5]{H-Structure}) 
 $$\D^\sigma f(x)=\D^\sigma f(e_1)\frac{1}{|x|^{2\sigma}}.$$ To compute the value of $\D^\sigma f(e_1)$ we use the fact that
 $\frac{1}{\gamma_n}\log\frac{1}{|x|}$ is a fundamental solution of $\D^\frac n2$ (see for instance \cite[Lemma A.2]{H-Structure}) i.e.,
 $$\int_{\R^n}\log\frac{1}{|x|}(-\Delta)^\frac n2\varphi(x)dx=\gamma_n\varphi(0),\quad\varphi\in\s(\R^n).$$
 Using integration by parts, which can be verified, we obtain
 \begin{align}
 \gamma_n\varphi(0)&=\int_{\R^n}f(x)(-\Delta)^\frac n2\varphi(x)dx\notag\\
 &=\int_{\R^n}(-\Delta)^\sigma f(x)(-\Delta)^{\frac n2-\sigma}\varphi(x)dx\notag\\
 &=\int_{\R^n}\frac{(-\Delta)^\sigma f(e_1)}{|x|^{2\sigma}}\left(|\xi|^{ n-2\sigma}\widehat{\varphi}\right)^\vee (x)dx\notag\\
 &=(-\Delta)^\sigma f(e_1)\int_{\R^n}\left(\frac{1}{|x|^{2\sigma}}\right)^\vee(\xi)\left(|\xi|^{ n-2\sigma}\widehat{\varphi}\right)d\xi\notag\\
 &=(-\Delta)^\sigma f(e_1)2^{n-2\sigma-\frac n2}\frac{\Gamma(\frac{n-2\sigma}{2})}{\Gamma(\frac{2\sigma}{2})}\int_{\R^n}\frac{1}{|\xi|^{n-2\sigma}}
 \left(|\xi|^{ n-2\sigma}\widehat{\varphi}\right)d\xi\notag\\
&=(-\Delta)^\sigma f(e_1)2^{n-2\sigma-\frac n2}\frac{\Gamma(\frac{n-2\sigma}{2})}{\Gamma(\frac{2\sigma}{2})}(2\pi)^\frac n2\varphi(0),\notag
 \end{align}
 where in the  $4$th equality we have used that  
 \begin{align} \label{constant}
 \F\left(\frac{1}{|x|^{n-\alpha}}\right)=2^{\alpha-\frac n2}\frac{\Gamma\left(\frac\alpha2\right)}{\Gamma\left(\frac{n-\alpha}{2}\right)}\frac{1}{|x|^\alpha},
 \quad 0<\alpha<n, 
 \end{align}
in the sense of tempered distribution. Since in our case $\F$  is the normalized Fourier transform, the constant  in the right hand side of \eqref{constant} appears slightly different from \cite[Section 5.9]{Lieb}.

Hence we have the lemma.
\end{proof}

The following lemma is the  Vitali's convergence theorem.
\begin{lem}[Vitali's convergence theorem]\label{epsilon-conv}
 Let $\Omega$ be a measure space with finite measure $\mu$ i.e., $\mu(\Omega)<\infty$. Let $f_k$ be a sequence of measurable function on $\Omega$ be such that
 \begin{itemize}
  \item [$i)$]$f_k\xrightarrow{k\to\infty} f$ almost everywhere in $\Omega$.
  \item [$ii)$] For $\e>0$ there exists $\delta>0$ such that 
  $$\int_{\tilde{\Omega}}|f_k|d\mu<\e\quad\text{for every }\tilde{\Omega}\subset{\Omega}\text{ with }\mu(\tilde{\Omega})<\delta.$$
 \end{itemize}
 Or,
 \begin{itemize}
  \item [$ii')$] There exists $p>1$ such that $$\sup_{k\in\mathbb{N}}\int_{\Omega}|f_k|^pd\mu<\infty.$$
 \end{itemize}
Then $f_k\rightarrow f$ in $L^1(\Omega)$.  
\end{lem}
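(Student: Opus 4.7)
The plan is to follow the classical proof of Vitali's convergence theorem, which combines Egorov's theorem with the equi-integrability hypothesis. The argument I have in mind splits naturally into three parts: (a) reduce hypothesis $ii')$ to hypothesis $ii)$; (b) establish that the pointwise limit $f$ lies in $L^1(\Omega)$ and satisfies an analogous equi-integrability property; (c) conclude $L^1$-convergence by splitting the domain via Egorov's theorem.

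For (a), H\"older's inequality gives
$$\int_{\tilde\Omega}|f_k|d\mu\leq \|f_k\|_{L^p(\Omega)}\,\mu(\tilde\Omega)^{1/p'},$$
and the right-hand side is uniformly small in $k$ provided $\mu(\tilde\Omega)$ is small, so $ii')$ implies $ii)$. Hence it suffices to prove the statement under hypothesis $ii)$.

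For (b), since $\mu(\Omega)<\infty$ I can cover $\Omega$ by finitely many measurable sets of measure less than $\delta$, so hypothesis $ii)$ gives a uniform $L^1$-bound on $\{f_k\}$. Fatou's lemma applied to $|f_k|\to|f|$ a.e.\ then yields $f\in L^1(\Omega)$. The same Fatou argument, now applied on an arbitrary set $\tilde\Omega$ with $\mu(\tilde\Omega)<\delta$, shows
$$\int_{\tilde\Omega}|f|d\mu\leq\liminf_{k\to\infty}\int_{\tilde\Omega}|f_k|d\mu\leq \e,$$
transferring the equi-integrability from the sequence to the limit $f$.

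For (c), given $\e>0$, pick the $\delta>0$ from hypothesis $ii)$. Egorov's theorem (applicable since $\mu(\Omega)<\infty$) provides a measurable set $A\subseteq\Omega$ with $\mu(\Omega\setminus A)<\delta$ on which $f_k\to f$ uniformly. Splitting
$$\int_\Omega|f_k-f|d\mu=\int_A|f_k-f|d\mu+\int_{\Omega\setminus A}|f_k-f|d\mu,$$
the first integral is bounded by $\mu(\Omega)\sup_{A}|f_k-f|$ and so vanishes as $k\to\infty$, while the second is at most $\int_{\Omega\setminus A}(|f_k|+|f|)d\mu\leq 2\e$ by the equi-integrability of both $\{f_k\}$ and $f$. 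Letting $\e\to 0$ concludes the proof. The only step that warrants care, and which I regard as the main (minor) obstacle, is the transfer of the equi-integrability hypothesis from $\{f_k\}$ to the limit $f$; this is precisely what allows the tail integral on $\Omega\setminus A$ to be controlled without assuming anything extra about $f$.
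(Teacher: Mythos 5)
Your argument is correct and is the standard textbook proof of Vitali's theorem. Note, however, that the paper does not give a proof of this lemma at all; it is cited as a known result from measure theory, so there is no argument in the paper to compare against. Your reduction of $ii')$ to $ii)$ via H\"older, the transfer of equi-integrability to the a.e.\ limit $f$ by Fatou, and the Egorov split of $\Omega$ into a set of uniform convergence and a small-measure remainder are all carried out correctly. The one place worth a second glance is the order of quantifiers in part (b): for a given $\e$ you first obtain $\delta$ from $ii)$ and then apply Fatou on each $\tilde\Omega$ with $\mu(\tilde\Omega)<\delta$, which indeed yields $\int_{\tilde\Omega}|f|\,d\mu\le\e$ for that same $\delta$, so the equi-integrability of $f$ inherits the same $\e$--$\delta$ pairing and the final estimate $\limsup_k\int_\Omega|f_k-f|\,d\mu\le 2\e$ goes through.
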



\medskip

\noindent\textbf{Acknowledgements}
I would like to thank my advisor Prof. Luca Martinazzi for all the useful discussion and encouragement.


\begin{thebibliography}{10}
\small

\bibitem{Adams}
\textsc{D. Adams:}
\emph{ A sharp inequality of J. Moser for higher order derivatives}, Ann. of Math. \textbf{128} (1988), 385-398.

\bibitem{Adi} 
\textsc{Adimurthi:}
\emph{Existence of positive solutions of the semilinear Dirichlet problem with critical growth for the $n$-Laplacian}, 
Ann. Scuola Norm. Sup. Pisa Cl. Sci. \textbf{4} 17 (1990), no. 3, 393-413.

 \bibitem{Adi-Str}
  \textsc{Adimurthi, M. Struwe:}
  \emph{Global compactness properties of semilinear elliptic equations with critical growth},
  J. Funct. Anal. \textbf{175} (2000), 125-167.
  
 \bibitem{Valdinoci} 
\textsc{E. Di Nezza, G. Palatucci, E. Valdinoci:}
\emph{Hitchhiker's guide to the fractional Sobolev spaces},
 Bull. Sci. Math. \textbf{136} (2012), No. 5, 521-573.  

\bibitem{dru} 
\textsc{O. Druet:}
\emph{Multibumps analysis in dimension $2$: quantification of blow-up levels}, Duke Math. J. \textbf{132} (2006), 217-269.

\bibitem{FM}
\textsc{L. Fontana, C. Morpurgo:}
\emph{Sharp Adams and Moser-Trudinger inequalities on $\R^n$ and other spaces of infinite measure},
arXiv: 1504.04678 (2015).

  
  \bibitem{H-Structure}
\textsc{A. Hyder:}
\emph{Structure of conformal metrics on $\mathbb{R}^n$ with constant Q-curvature}, arXiv: 1504.07095 (2015).
 
 \bibitem{IS}       
   \textsc{A. Iannizzotto, M. Squassina:}
   \emph{1/2-Laplacian problems with exponential nonlinearity}, J. Math. Anal. Appl. \textbf{414} (2014), 372-385.
 
 \bibitem{IIM}
  \textsc{S. Iula, A. Maalaoui, L. Martinazzi:}
  \emph{A fractional Moser-Trudinger type inequality in one dimension and its critical points}, arXvi: 1504.04862 (2015).

  
 \bibitem{Lakkis}
  \textsc{O. Lakkis:}
  \emph{Existence of solutions for a class of semilinear polyharmonic equations critical exponential growth}, Adv. Diff. Eq. \textbf{4} No. 6 (1999), 877-906. 
          
\bibitem{Lam-Lu}
\textsc{N. Lam, G. Lu:}
\emph{A new approach to sharp Moser-Trudinger and Adams type inequalities: a rearrangement-free argument}, J. Differential Equation \textbf{255} (2013), 298-325.

\bibitem{Lieb}
 \textsc{E.H. Lieb, M. Loss:}
 \emph{Analysis},
 Second edition. Graduate Studies in Mathematics, 14. American Mathematical Society, Providence, RI, 2001. ISBN:0-8218-2783-9.
 
 \bibitem{Lions}
  \textsc{P. L. Lions:}
  \emph{The concentration-compactness principle in the calculus of variations. The limit case. I.}, 
  Rev. Mat. Iberoamericana. \textbf{1} (1985), No. 1, 145-201.
  
 \bibitem{Luca-Armin}
  \textsc{A. Maalaoui, L. Martinazzi, A. Schikorra:}
  \emph{Blow-up behaviour of a fractional Adams-Moser-Trudinger type inequality in odd dimension},
  arXiv: 1504.00254 (2015).
 
\bibitem{MM}
\textsc{A. Malchiodi, L. Martinazzi:}
 \emph{Critical points of the Moser-Trudinger functional on a disk}, J. Eur. Math. Soc. (JEMS) \textbf{16} (2014), 893-908.

\bibitem{mar4} 
\textsc{L. Martinazzi:}
\emph{A threshold phenomenon for embeddings of $H^m_0$ into Orlicz spaces}, Calc. Var. Partial Differential Equations \textbf{36} (2009), 493-506. 
 
\bibitem{LM}
\textsc{L. Martinazzi:}
\emph{Fractional Adams-Moser-Trudinger type inequalities}, Nonlinear Analysis \textbf{127} (2015), 263-278.

\bibitem{MS2}
\textsc{L. Martinazzi, M. Struwe:}
\emph{Quantization for an elliptic equation of order 2m with critical exponential non-linearity }, Math Z. \textbf{270} (2012), 453-487.

\bibitem{Mas}
  \textsc{N. Masmoudi, F. Sani:}
  \emph{Trudinger-Moser Inequalities with the Exact Growth Condition in $\R^n$ and Applications},
  Comm. Partial Differential Equations, \textbf{40}  (2015), No. 8, 1408-1440.
  
\bibitem{Moser} 
\textsc{J. Moser:}
\emph{A sharp form of an inequality by N. Trudinger},
Indiana Univ. Math. J. \textbf{20} (1970/71) 1077-1092.

 
\bibitem{Poho}
\textsc{S. I. Pohozaev:}
\emph{On eigenfunctions of the equation $\Delta u+\lambda f(u)=0$}, Dokl. Akad. Nank. SSSR \textbf{165} (1965), 36-39.

\bibitem{RS}
\textsc{F. Robert, M. Struwe:}
\emph{Asymptotic profile for a fourth order PDE with critical exponential growth in dimension four}, Adv. Nonlin. Stud. \textbf{4} (2004), 397-415.


 \bibitem{Ruf}
  \textsc{B. Ruf:} 
  \emph{A sharp Trudinger-Moser type inequality for unbounded domains in $\mathbb{R}^2$}, J. Funct. Anal. \textbf{219} (2) (2005), 340-367.
   
   \bibitem{Sil}
   \textsc{L. Silvestre:} \emph{Regularity of the obstacle problem for a fractional power of the Laplace operator},
Comm. Pure Appl. Math. \textbf{60} (2007), no. 1, 67-112.
   
 \bibitem{Stein}
  \textsc{E. M. Stein:} 
  \emph{Singular integrals and differentiable properties of functions}, Princeton Math. Ser. \textbf{30} Princeton University   Press, Princeton, NJ, 1970, xiv+290 pp.
 
\bibitem{str2} 
\textsc{M. Struwe:} 
\emph{Quantization for a fourth order equation with critical exponential growth}, Math. Z. \textbf{256} (2007), 397-424. 
 
\bibitem{Trudinger}
\textsc{N. S. Trudinger:}
\emph{On embedding into Orlicz spaces and some applications}, J. Math. Mech. \textbf{17} (1967), 473-483.

  
  
  \bibitem{VIY}
\textsc{V. I. Yudovich:}
\emph{On certain estimates connected with integral operators and solutions of elliptic equations},
Dokl. Akad. Nank. SSSR \textbf{138} (1961), 805-808.


  \end{thebibliography}
 \end{document}